\newtheorem{thm}{Theorem}[section]
\newtheorem{lem}[thm]{Lemma}
\newtheorem{prop}[thm]{Proposition}
\newtheorem{cor}[thm]{Corollary}
\newtheorem{hyp}[thm]{Hypothesis}
\newtheorem*{thma}{Theorem A}
\newtheorem*{thmb}{Theorem B}
\newtheorem*{thmc}{Theorem C}
\newtheorem*{thmd}{Theorem D}
\newtheorem*{thme}{Theorem E}
\newtheorem*{thmf}{Theorem F}
\newcommand{\Hpi}{\mathrm{Hall}_\pi}
\newcommand{\Hpip}{\mathrm{Hall}_{\pi'}}
\newcommand{\ccl}{\mathrm{ccl}}
\newcommand{\pcl}{\mathrm{pr}}
\newcommand{\uip}{_{\lhd i+1}}
\newcommand{\ui}{_{\lhd i}}
\newcommand{\un}{_{\lhd}}
\newcommand{\uo}{_{\lhd 0}}
\newcommand{\ut}{_{\lhd 2}}
\newcommand{\upt}{^{\rhd}}
\newcommand{\angy}{\langle y \rangle}
\newcommand{\Irr}{\mathrm{Irr}}
\newcommand{\Conpi}{\mathrm{Con}_{\pi}}
\newcommand{\SConpi}{\mathrm{SCon}_{\pi}}
\newcommand{\krn}{\mathrm{ker}}
\newcommand{\Lev}{\mathrm{Lev}_G}
\newcommand{\Aut}{\mathrm{Aut}}
\newcommand{\Sym}{\mathrm{Sym}(4)}
\newcommand{\bP}{\mathbb{P}}
\newcommand{\cO}{\mathrm{O}}
\begin{document}

\title{A problem in the Kourovka notebook concerning the number of conjugacy classes of a finite group}

\author{Colin Reid\\
School of Mathematical Sciences\\
Queen Mary, University of London\\
Mile End Road, London E1 4NS\\
c.reid@qmul.ac.uk}

\maketitle

\begin{abstract}In this paper, we consider Problem 14.44 in the Kourovka notebook, which is a conjecture about the number of conjugacy classes of a finite group.  While elementary, this conjecture is still open and appears to elude any straightforward proof, even in the soluble case.  However, we do prove that a minimal soluble counterexample must have certain properties, in particular that it must have Fitting height at least $3$ and order at least $2000$.\end{abstract}

\emph{Keywords}: Group theory; character theory; finite soluble groups; conjugacy classes

\section{Introduction}

\paragraph{Notation} We will denote by $\bP$ the set of prime numbers, $\pi$ some subset thereof, and $\pi'$ its complement in $\bP$.  Given groups $H \leq K$, and $k$ a subset of $K$, we will use $H_k$ to mean the centraliser in $H$ of $k$, and $k^H$ to mean $\{k^h | h \in H\}$.  Given $k \in K$, we write $H_k$ for $H_{\{k\}}$ and $k^H$ for $\{k\}^H$.  The number of conjugacy classes of the group $H$ is denoted $\ccl(H)$, and $\ccl_\pi(H)$ is the number of conjugacy classes of $\pi$-elements.  If $\Gamma$ is a set of subgroups of $K$, then $\Gamma \cap H$ means $\{ S \cap H | S \in \Gamma \}$.  The derived subgroup of $H$ will be denoted $H'$.  Given a finite soluble group $G$ and a set of primes $\pi$, $\cO_\pi(G)$ is the largest normal $\pi$-subgroup of $G$.\\

This paper concerns the following open conjecture, part (i) of which is Problem 14.44 in the Kourovka notebook (\cite{1}):

\paragraph{Conjecture}Let $G$ be a finite group with subgroups $A$ and $B$ such that $G = AB$ and $(|A|,|B|)=1$.  Then:

(i) $\ccl(G) \leq \ccl(A) \ccl(B)$;

(ii) $\ccl(G) = \ccl(A) \ccl(B)$ if and only if $G \cong A \times B$.\\

As noted in \cite{1}, if the condition $(|A|,|B|)=1$ is relaxed to $A \cap B = 1$, the conjecture becomes false, with a counterexample given by $G$ dihedral of order $4pq$ where $p$ and $q$ are distinct primes,$A$ dihedral of order $2p$, and $B$ dihedral of order $2q$.  On the other hand, it is clear that $\ccl(G) \leq \ccl(H)|G:H|$ for any subgroup $H$, so certainly $\ccl(G) \leq \ccl(A)\ccl(B)$ if either $A$ or $B$ is abelian and $A \cap B = 1$.\\

This conjecture has received very little attention in the existing literature.  The most important published contributions to date can be found in a paper of Gallagher (\cite{2}), and a subsequent paper by Vera L\'{o}pez and Ortiz de Elguea (\cite{3}).  Although these papers precede the appearance of Problem 14.44 in the Kourovka Notebook and do not consider it directly, they are concerned with enumeration of conjugacy classes in a manner relevant to this problem and effectively prove the conjecture in certain special cases.  The main result arising from these papers is as follows:

\begin{thm}\label{nsgpconpi}(\cite{2}, in the case $\pi = \bP$; \cite{3} for other $\pi$) Let $G$ be a finite group, let $N \unlhd G$, and let $\pi$ be a set of primes.  Then $\ccl_{\pi}(G) \leq \ccl_{\pi}(N)\ccl_{\pi}(G/N)$.  Equality occurs if and only if $h \in NG_g$ whenever $g$ is a $\pi$-element and $h$ is any element of $G$ for which $[g,h] \in N$.\end{thm}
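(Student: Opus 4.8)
The plan is to fibre the conjugacy classes of $\pi$-elements of $G$ over those of $G/N$. The quotient map $G\to G/N$ sends $\pi$-elements to $\pi$-elements and conjugates to conjugates, so it induces a map $\rho$ from the conjugacy classes of $\pi$-elements of $G$ to those of $G/N$. This $\rho$ is surjective: given a $\pi$-element $\bar x$ of $G/N$, lift it to some $x\in G$ and let $d$ be the $\pi'$-part of $|x|$, so that $x^d$ is a $\pi$-element; since $d$ is a $\pi'$-number and $|\bar x|$ a $\pi$-number, $\gcd(d,|\bar x|)=1$, so $\langle\bar x^d\rangle=\langle\bar x\rangle$ and hence $\bar x=\overline{x^{de}}$ for a suitable $e$, with $x^{de}$ a $\pi$-element lying over $\bar x$. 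Thus $\ccl_\pi(G)=\sum_{\bar C}|\rho^{-1}(\bar C)|$, summed over the $\ccl_\pi(G/N)$ classes $\bar C$, and it is enough to prove $|\rho^{-1}(\bar C)|\le\ccl_\pi(N)$ for each $\bar C$, together with a description of equality. (A chief-series induction is not available, as the inequality is not transitive in the needed way: $\ccl_\pi(M)\ccl_\pi(N/M)$ can strictly exceed $\ccl_\pi(N)$ for $M\unlhd N$.)

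Fix $\bar C$, pick a $\pi$-element representative $\bar g$ and, by the above, a $\pi$-element $g\in G$ with $gN=\bar g$. Let $D=\{h\in G:g^h\in gN\}$, the preimage in $G$ of the centraliser of $\bar g$ in $G/N$; then $N\le D$ and $[g,h]\in N\iff h\in D$. Every class in $\rho^{-1}(\bar C)$ has a representative among the $\pi$-elements of the coset $gN$, and two $\pi$-elements of $gN$ are $G$-conjugate exactly when they are $D$-conjugate (a conjugating element must stabilise the coset $gN$). Hence $|\rho^{-1}(\bar C)|$ is the number of $D$-orbits among the $\pi$-elements of $gN$; and since $N\unlhd D$, with the $N$-class of any element of $gN$ staying inside $gN$, each such $D$-orbit is a union of $N$-orbits, so $|\rho^{-1}(\bar C)|$ is at most the number of $N$-orbits among the $\pi$-elements of $gN$.

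The crux is then: the $\pi$-elements of a coset $gN$ form at most $\ccl_\pi(N)$ orbits under $N$-conjugation. By orbit counting this number equals $\frac{1}{|N|}\sum_x|N_x|$ over $\pi$-elements $x\in gN$, while $\ccl_\pi(N)=\frac{1}{|N|}\sum_y|N_y|$ over $\pi$-elements $y\in N$, so one must prove $\sum_x|N_x|\le\sum_y|N_y|$. Grouping by $n\in N$, the left-hand term at $n$ is the number of $\pi$-elements of $gN\cap G_n$, and $gN\cap G_n$ is either empty or a coset of $N_n$; when $\pi=\bP$ that term is thus $0$ or $|N_n|$, hence never exceeds the right-hand term $|N_n|$, and the bound follows term by term — Gallagher's case \cite{2}. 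For general $\pi$, however, a coset of $N_n$ may carry more $\pi$-elements than $N_n$ itself (already for $G=S_3$, $N=A_3$, $\pi=\{2\}$, with $n=1$), so the termwise estimate \emph{fails}. One must instead balance the cosets that carry $\pi$-elements against those that carry none; this delicate counting over cosets of a normal subgroup is the main obstacle, and it is what \cite{3} contributes beyond \cite{2}.

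Summing gives $\ccl_\pi(G)\le\ccl_\pi(N)\ccl_\pi(G/N)$, with equality if and only if $|\rho^{-1}(\bar C)|=\ccl_\pi(N)$ for every $\bar C$. Writing $D_g$ for the preimage of the centraliser of $gN$, one has $[g,h]\in N\iff h\in D_g$ for every $\pi$-element $g$, and since always $NG_g\le D_g$, the condition in the statement reads exactly: $D_g=NG_g$ for every $\pi$-element $g$. One checks this is equivalent to fullness of all the fibres. Indeed, if it holds then for a $\pi$-element $x$ of a coset $gN$ (with $g$ a $\pi$-element) we get $x^{D_g}=x^{D_x}=x^{NG_x}=x^N$, so the last inequality of the second paragraph becomes an equality; taking $x\in N$ forces $x^G=x^N$ for every $\pi$-element of $N$; and a re-examination of the count of the third paragraph shows the residual slack there vanishes precisely under this hypothesis. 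The converse runs these equalities backwards.
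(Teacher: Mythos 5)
First, a point of comparison: the paper does not prove this theorem at all --- it is quoted as background from Gallagher \cite{2} (for $\pi=\bP$) and Vera L\'opez--Ortiz de Elguea \cite{3} (for general $\pi$) --- so there is no in-paper proof to measure you against. Judged as a standalone proof, your proposal has a genuine gap. The reduction is sound and is the standard one: fibre the $\pi$-classes of $G$ over those of $G/N$, identify the classes lying over $\bar{C}=\bar{g}^{G/N}$ with the $D$-orbits on the $\pi$-elements of $gN$ (where $D$ is the preimage of the centraliser of $\bar{g}$), and bound these by the $N$-orbits. But the entire weight of the theorem then rests on the claim that the $\pi$-elements of the coset $gN$ fall into at most $\ccl_\pi(N)$ classes under $N$-conjugation. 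You establish this only for $\pi=\bP$, via the termwise comparison of $|gN\cap G_n|$ with $|N_n|$; for general $\pi$ you exhibit (correctly, with $S_3$) that the termwise bound fails, and then state that the required balancing of cosets ``is what \cite{3} contributes beyond \cite{2}.'' That is an acknowledgement that the hard case is missing, not an argument for it. Citing \cite{3} for the general statement would be perfectly legitimate --- it is exactly what the paper does --- but then the surrounding argument is redundant; as a proof, the proposal is incomplete precisely where the content lies.

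The equality characterisation inherits and compounds this. Equality requires both that $D$-conjugacy and $N$-conjugacy coincide on the $\pi$-elements of each coset $gN$ and that each such coset realises the full count $\ccl_\pi(N)$; your verification of the second point is that ``a re-examination of the count of the third paragraph shows the residual slack there vanishes,'' but for $\pi\neq\bP$ there is no completed count in that paragraph to re-examine. Even for $\pi=\bP$ the equivalence needs work: the termwise equality condition is that every $n\in N$ centralises some element of $gN$, i.e.\ $g\in NG_n$, and extracting this from the stated condition $h\in NG_g$ requires exploiting the symmetry $[g,h]\in N\iff[h,g]\in N$ with the roles of the two elements exchanged --- an exchange that is problematic for general $\pi$, since the hypothesis quantifies only over $\pi$-elements $g$. ``The converse runs these equalities backwards'' does not discharge any of this.
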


Gallagher's paper also gives a character-theoretic proof in the case $\pi = \bP$, with a different condition for equality: equality occurs if and only if each irreducible character of $N$ extends to $N \langle g,h \rangle$ for every $g,h \in G$ such that $[g,h] \in N$.\\

A natural context in which to consider the conjecture is that of finite soluble groups.  For a detailed account of what is known about finite soluble groups, see \cite{4}.

\paragraph{Definitions} Let $G$ be a finite soluble group.  A \emph{Hall $\pi$-subgroup} is a subgroup whose order is a $\pi$-number and whose index is a $\pi'$-number.  The set of Hall $\pi$-subgroups of $G$ is denoted $\Hpi(G)$.  Note that the conditions $G=AB$ and $(|A|,|B|)=1$ taken together are equivalent to requiring $A$ to be a Hall $\pi$-subgroup and $B$ a Hall $\pi'$-subgroup of $G$ for some $\pi$.  A \emph{Sylow system} for $G$ is a lattice $\Sigma$ of pairwise permutable Hall subgroups of $G$ such that $\Sigma$ contains exactly one element of $\Hpi(G)$ for any $\pi$.  A \emph{complement basis} for $G$ is a set of Hall $\{p\}'$-subgroups of $G$, one for each prime $p$ dividing the order of $G$.\\

\begin{thm}[Hall's theorem]
Let $G$ be a finite soluble group.  The set $\Hpi(G)$ is nonempty for every $\pi \subseteq \bP$, and every $\pi$-subgroup of $G$ is contained in a Hall $\pi$-subgroup.  Every complement basis of $G$ is contained in a unique Sylow system $\Sigma$ of $G$ (in fact, the complement basis generates $\Sigma$ as a lattice).  Any lattice of pairwise permutable Hall subgroups is contained in at least one Sylow system.  $G$ acts transitively on the set of Sylow systems of $G$ (or equivalently on the set of complement bases) via
\[ \Sigma^g := \{S^g|S \in \Sigma\}.\]
\end{thm}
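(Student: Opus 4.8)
The plan is to prove every assertion by induction on $|G|$, taking the Schur--Zassenhaus theorem as known --- both the existence of a complement to a normal Hall subgroup and, since everything here is soluble, the conjugacy of such complements (and the consequence that every subgroup of order coprime to the normal Hall subgroup lies in one of them). First I would dispose of existence and the embedding property together. Assume $G\neq 1$ and choose a minimal normal subgroup $N$; solubility makes $N$ an elementary abelian $p$-group. Given a set $\pi$ and a $\pi$-subgroup $U\leq G$, induction gives a Hall $\pi$-subgroup $H/N$ of $G/N$ containing $UN/N$. If $p\in\pi$ then $H$ itself has order a $\pi$-number and index a $\pi'$-number, so it is a Hall $\pi$-subgroup of $G$ containing $U$. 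If $p\notin\pi$ then $N\unlhd H$ has order coprime to $|H/N|$, so by Schur--Zassenhaus $N$ has a complement $L$ in $H$ with $|L|=|G|_\pi$, and the $\pi$-subgroup $U$ of $H$ is conjugate in $H$ into some such complement; that complement is a Hall $\pi$-subgroup of $G$. Taking $U=1$ yields existence.

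For conjugacy, let $H_1,H_2$ be Hall $\pi$-subgroups and keep $N$ as above. If $p\in\pi$, comparing orders forces $N\leq H_i$, so $H_1/N$ and $H_2/N$ are conjugate in $G/N$ by induction and lifting the conjugating element gives $H_1^g=H_2$. If $p\notin\pi$, then $H_1N/N$ and $H_2N/N$ are conjugate in $G/N$, so after replacing $H_1$ by a conjugate we may assume $H_1N=H_2N=:M$; now $H_1,H_2$ are complements to $N$ in $M$, conjugate by induction if $M<G$ and by Schur--Zassenhaus if $M=G$. The transitivity of $G$ on Sylow systems (equivalently on complement bases) is a ``simultaneous conjugacy'' strengthening of this, to be addressed once uniqueness is in place.

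For the Sylow-system assertions I would first record two arithmetic facts, each obtained by comparing the exponent of each prime on the two sides: (i) if $p\neq q$, $S_p$ is a Hall $p'$-subgroup and $S_q$ a Hall $q'$-subgroup, then $|S_pS_q|=|S_p||S_q|/|S_p\cap S_q|\geq|G|$, hence $S_pS_q=G=S_qS_p$ --- so \emph{any} complement basis is automatically pairwise permutable; (ii) if $X$ is a Hall $\sigma$-subgroup, $Y$ a Hall $\tau$-subgroup and $XY=YX$, then $XY$ is a Hall $(\sigma\cup\tau)$-subgroup and $X\cap Y$ a Hall $(\sigma\cap\tau)$-subgroup. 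Fix a complement basis $\{S_p:p\mid |G|\}$ and for a set of primes $A$ put $D_A:=\bigcap_{p\in A}S_p$; using (i) and repeated applications of (ii) one shows each $D_A$ is a Hall $\pi$-subgroup with $\pi$ the complement of $A$ among the primes of $|G|$, so $\Sigma:=\{D_A\}$ contains exactly one Hall $\pi$-subgroup for every $\pi$. It then remains to check that $\Sigma$ is a lattice of pairwise permutable subgroups, after which $\Sigma$ is a Sylow system containing the given complement basis. For uniqueness: in any Sylow system $\Sigma'$ with Hall $p'$-subgroups $S'_p$, permutability forces the Hall $\pi$-subgroup of $\Sigma'$ to lie in $S'_p$ for each $p\notin\pi$ (their product is again a Hall $p'$-subgroup containing $S'_p$, hence equals it), so it lies in $\bigcap_{p\notin\pi}S'_p$, and an order comparison gives equality; thus $\Sigma'$ is recovered from, and generated as a lattice by, its complement basis. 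That a pairwise permutable lattice of Hall subgroups lies in some Sylow system follows by enlarging it to a complement basis --- for each prime with no Hall complement yet present one produces one permuting with all existing members, by another Schur--Zassenhaus/induction argument --- and invoking the construction above; and transitivity on Sylow systems follows from the simultaneous-conjugacy statement together with the uniqueness just proved.

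The main obstacle, and where essentially all the content lies, is the compatibility claim: that the subgroups $D_A$ arising from a complement basis are pairwise permutable and closed under products (equivalently, that the candidate $\Sigma$ really is a Sylow system), together with the parallel simultaneous-conjugacy statement underlying transitivity. In the inductive step one passes to $G/N$ for a minimal normal $q$-subgroup $N$, using that $\{S_pN/N\}$ is again a complement basis; but the argument must be split according to whether $q$ belongs to the various prime sets $A,B,\dots$ in play, since $N\leq S_p$ for $p\neq q$ whereas $N$ meets the Hall $q'$-subgroup trivially, so the different $D_A$ interact with $N$ in genuinely different ways. Organising that case analysis --- rather than any single clever trick --- is the real work.
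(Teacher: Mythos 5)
The paper offers no proof of this statement: it is quoted as classical background (P.~Hall's theorems on Hall subgroups and Sylow systems), with the reader referred to Doerk--Hawkes for the theory of finite soluble groups. So there is no internal argument to compare against; measured against the standard treatment, your outline is the classical proof and its concrete steps are correct: induction plus Schur--Zassenhaus for existence, embedding and conjugacy of Hall $\pi$-subgroups; the order computations (i) and (ii); the construction $D_A=\bigcap_{p\in A}S_p$ from a complement basis; and the uniqueness argument recovering a Sylow system from its complement basis.

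You do, however, mislocate the difficulty in two places, and leave one step genuinely open. First, the ``compatibility claim'' you call the real work needs no case analysis over minimal normal subgroups: it closes by the counting you have already set up. That each $D_A$ is a Hall subgroup of order $|G|_{A'}$ follows by induction on $|A|$ from $|D_{A\setminus\{q\}}||S_q|/|D_{A\setminus\{q\}}\cap S_q|\geq |G|$, which forces $D_{A\setminus\{q\}}S_q=G$ and hence $|D_A|=|G|_{A'}$; then for any $A,B$ one has the containment of sets $D_AD_B\subseteq\bigcap_{p\in A\cap B}S_p=D_{A\cap B}$ together with $|D_AD_B|=|D_A||D_B|/|D_A\cap D_B|\geq |G|_{A'}|G|_{B'}/|G|_{A'\cap B'}=|G|_{(A\cap B)'}$, so $D_AD_B=D_{A\cap B}$, giving permutability and lattice closure in one stroke. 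Second, transitivity needs no simultaneous-conjugacy induction: by your (i) \emph{every} choice of one Hall $p'$-subgroup per prime is a complement basis, so there are exactly $\prod_p|G:\mathrm{N}_G(S_p)|$ of them; each index $|G:\mathrm{N}_G(S_p)|$ divides $|G:S_p|=|G|_p$, so these indices are pairwise coprime, whence $|G:\bigcap_p\mathrm{N}_G(S_p)|=\prod_p|G:\mathrm{N}_G(S_p)|$ and a single orbit exhausts all complement bases; transitivity on Sylow systems then follows from the uniqueness you proved. The one assertion that remains only a gesture is that an arbitrary pairwise permutable lattice of Hall subgroups can be enlarged to a complement basis permuting with all of its members --- that is the least routine part of Hall's theorem and does require a genuine inductive argument, which your sketch does not supply; you should also record explicitly that, once such a basis is found, your uniqueness argument shows the Sylow system it generates contains the original lattice.
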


As a consequence, any finite soluble group $G$ will admit a suitable factorisation $G=AB$, for any $\pi \subseteq \bP$, and the isomorphism types of $A$ and $B$ are determined uniquely by the isomorphism type of $G$.  We will make frequent use of the following hypothesis, which will generally be assumed whenever subgroups $A$ and $B$ of $G$ are referred to:\\

\begin{hyp}\label{solhyp}$\pi$ is some set of primes, $G$ is a finite soluble group, $\Sigma$ is a Sylow system of $G$, and $A$ and $B$ are elements of $\Sigma$ such that $A \in \Hpi(G)$ and $B \in \Hpip(G)$.
\end{hyp}

For convenience, we also define two properties of finite soluble groups relating to the conjecture.

\paragraph{Definition}We say $G$ is $\Conpi$ if $\ccl(G) \leq \ccl(A)\ccl(B)$, in other words $G$ agrees with part (i) of the conjecture, and $G$ is $\Conpi^*$ if either $\ccl(G) < \ccl(A)\ccl(B)$ or $G \cong A \times B$, that is, $G$ agrees with both parts of the conjecture.\\

It is immediate that a group is $\Conpi$ if and only if it is $\mathrm{Con}_{\pi'}$, and similarly for $\Conpi^*$.  Nilpotent groups are clearly $\Conpi^*$ for every $\pi$, and by Gallagher's result, a group with a normal Hall $\pi$-subgroup is $\Conpi$.  Our aim is to generalise these results in the soluble case by imposing weaker conditions on the group $G$.\\

Both parts of the conjecture remain open for finite soluble groups (indeed, no counterexamples of any sort are known).  Although the conjecture is elementary, and concerns a basic invariant in finite group theory, it does not appear to admit an easy proof.  One approach which can be seen to fail is as follows:\\

Given an element $g \in G$, it can be written in a unique way as $xy$, where $x \in A$ and $y \in B$.  We might compare $|G_{xy}|$ with $|A_x||B_y|$, given the following:
\[|G|\ccl(G) = \sum_{g \in G}|G_g| = \sum_{x \in A}\sum_{y \in B}|G_{xy}|\]
\[|G|\ccl(A)\ccl(B) = |A||B|\ccl(A)\ccl(B) = \sum_{x \in A}\sum_{y \in B}|A_x||B_y|\]
However, if $x$ and $y$ do not commute, the centralisers of $x$ and $y$ may bear little relation to the centraliser of $xy$.  Even if $xy=yx$, this only ensures $A_{xy} \leq A_x$ and $B_{xy} \leq B_y$; it does not mean that $A_{xy}$ and $B_{xy}$ are Hall subgroups of $G_{xy}$.  So it is possible that $|G_{xy}|>|A_{xy}||B_{xy}|$, or indeed $|G_{xy}|>|A_x||B_y|$.  An example of this is as follows:\\
 
Let $G$ be a group of the form $(H \times K) : \Sym$, where $H$ is elementary abelian of order $p^4$ and $K$ is elementary abelian of order $q^4$, for $p$ and $q$ distinct primes such that $p>3$ and $q>2$, such that $\Sym$ permutes generators $\{e_1,e_2,e_3,e_4\}$ of $H$ and $\{f_1,f_2,f_3,f_4\}$ of $K$.  Let $\pi=\{2,p\}$, let $x=e_1e^{-1}_2$, and let $y=f_1f^{-1}_2$.  Then $x$ and $y$ commute, and indeed have the same centraliser in $G$, a subgroup of index $12$.  So $|G_{xy}|=2p^4 q^4$.  But if we take $A$ to be $p^4:D$, where $D$ is the subgroup of $Sym(4)$ generated by $(12)(34)$ and $(1234)$, we see $D_x$ is trivial (since $D$ does not contain the element $(34)$) and so $A_x$ has order $p^4$.  Any choice for $B$ will give $B_y$ of order $q^4$, so $|A_x||B_y|=p^4 q^4 < |G_{xy}|$.\\

The example above also illustrates that $A_x$ need not be a Hall subgroup of $G_x$ even for $x \in A$, something which also occurs within $\Sym$ itself: $(12)(34)$ has a centraliser of order $8$ in $\Sym$, but is not centralised by $D$.\\
 
One might also try to proceed by considering a finite soluble group $G$, all of whose proper sections are $\Conpi^*$ (where `section' is used here to indicate a quotient of a subgroup, with no further restrictions).\\

An elementary approach along these lines would be to take a normal subgroup $N$ of prime index (let us suppose $B \leq N$), observe that $\ccl(N) \leq \ccl(N \cap A)\ccl(B)$, and hope that
\[ f(\ccl(G))-f(\ccl(N)) \leq f(\ccl(A)) - f(\ccl(N \cap A)) \]
where $f$ is some increasing function.  This argument can be seen to fail in the case where $G$ and $N$ have shapes $7:6$ and $7:2$ respectively, and $\pi=\{3,7\}$: $\ccl(N) = 5$ and $\ccl(G) = 7$, so $\ccl(G) > \ccl(N)$, but $\ccl(N \cap A)=7$ and $\ccl(N) = 5$, so $\ccl(A) < \ccl(N \cap A)$.\\

This example illustrates another complication: given a group $G$ and a subgroup $H$, it is possible for $\ccl(H)$ to be greater than $\ccl(G)$.  The importance of this obstacle is illustrated by the following, which is easily proved:

\begin{prop}\label{easysgp}
Assume Hypothesis \ref{solhyp}.  Suppose that $\ccl(S_x) \leq \ccl(A)$ for every $\pi$-element $x$, where $S_x$ is a Hall $\pi'$-subgroup of $G_x$ (and hence isomorphic to a subgroup of $B$, by Hall's theorem).  Then $G$ is $\Conpi$.  If in addition $\ccl(S_x) < \ccl(B)$ whenever $S_x$ is not isomorphic to $B$, then $G$ is $\Conpi^*$.\end{prop}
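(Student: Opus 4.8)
The plan is to count the conjugacy classes of $G$ by sorting them according to the class of the $\pi$-part of a representative. Every $g\in G$ factors uniquely as $g=su=us$ with $s$ a $\pi$-element and $u$ a $\pi'$-element, both powers of $g$; write $s=g_{\pi}$. The first step is the identity
\[ \ccl(G)=\sum_{[s]}\ccl_{\pi'}(G_s), \]
where $[s]$ ranges over the $G$-classes of $\pi$-elements of $G$. Indeed $[g]\mapsto[g_{\pi}]$ is a well-defined surjection onto the $G$-classes of $\pi$-elements; fixing a representative $s$, every element with $\pi$-part $s$ has the form $su$ with $u$ a $\pi'$-element of $G_s$, and $su$ is $G$-conjugate to $su'$ if and only if $u$ is $G_s$-conjugate to $u'$ (a conjugating element must carry the $\pi$-parts to each other, hence lies in $G_s$). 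So the fibre over $[s]$ is in bijection with the set of $G_s$-classes of $\pi'$-elements of $G_s$.

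For the $\Conpi$ part I would bound the two ingredients separately. By Hall's theorem every $\pi$-element of $G$ is conjugate into $A$, so the map from $A$-classes to $G$-classes of $\pi$-elements is onto and the number of summands is at most $\ccl(A)$. For a fixed summand, every $\pi'$-element of $G_s$ is $G_s$-conjugate into its Hall $\pi'$-subgroup $S_s$, so $\ccl_{\pi'}(G_s)\le\ccl_{\pi'}(S_s)=\ccl(S_s)$ (the last equality because $S_s$ is a $\pi'$-group), and this is at most $\ccl(B)$ by hypothesis. Hence $\ccl(G)\le\ccl(A)\ccl(B)$, as required. (This step is where the hypothesis on the $\ccl(S_x)$ is used; note the stated bound on each summand must be $\ccl(B)$, not $\ccl(A)$, for the factor $\ccl(B)$ to appear.)

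For the $\Conpi^*$ part, suppose $\ccl(G)=\ccl(A)\ccl(B)$, since otherwise $G$ is trivially $\Conpi^*$. Then equality holds at every stage of the estimate $\ccl(G)=\sum_{[s]}\ccl_{\pi'}(G_s)\le\sum_{[s]}\ccl(S_s)\le\ccl_{\pi}(G)\ccl(B)\le\ccl(A)\ccl(B)$, so: $\ccl_{\pi}(G)=\ccl(A)$; for every $\pi$-element $s$ we have $\ccl(S_s)=\ccl(B)$, whence $S_s\cong B$ by the additional hypothesis in contrapositive form; and $\ccl_{\pi'}(G_s)=\ccl(S_s)$. In particular $|G_s|_{\pi'}=|B|$ for every $\pi$-element $s$, so $G_s$ contains a full Hall $\pi'$-subgroup of $G$, hence contains $L:=\cO_{\pi'}(G)$; thus $[A,L]=1$ and $A\cap L=1$. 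I would now obtain $G\cong A\times B$ by induction on $|G|$. Passing to $\overline G=G/L$: since $L$ is a $\pi'$-group, $\ccl_{\pi}(L)=1$, so Theorem~\ref{nsgpconpi} gives $\ccl_{\pi}(G)\le\ccl_{\pi}(\overline G)\le\ccl(AL/L)=\ccl(A)=\ccl_{\pi}(G)$, forcing equality in Theorem~\ref{nsgpconpi} for $L\unlhd G$; using its equality condition together with $L\le G_s$ one checks that $\overline G$ again satisfies the conditions just obtained (with Hall subgroups $AL/L\cong A$ and $BL/L$), so inductively $\overline G$ is the direct product of $AL/L$ and $BL/L$. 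Then $AL\unlhd G$, and as $AL=A\times L$ the factor $A=\cO_{\pi}(AL)$ is characteristic in $AL$, so $A\unlhd G$. Once $A\unlhd G$, the conjugates of $B$ are exactly the complements to $A$ in $G$, which are all $A$-conjugate since $(|A|,|B|)=1$ and $G$ is soluble; each $a\in A$ is centralised by one such complement, so $A$ is the union of the $A$-conjugates of $C_A(B)$, forcing $C_A(B)=A$, i.e. $[A,B]=1$ and $G=A\times B$.

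The main obstacle is the base case $L=\cO_{\pi'}(G)=1$ of this induction: one must show that a soluble group with no nontrivial normal $\pi'$-subgroup, satisfying $|G_s|_{\pi'}=|B|$ for every $\pi$-element $s$ and $\ccl_{\pi}(G)=\ccl(A)$, has $A\unlhd G$ — equivalently $B=1$. Here $F(G)$ is a $\pi$-group with $C_G(F(G))\le F(G)$, and the condition forces every element of $Z(F(G))$ to be centralised by some conjugate of $B$; converting this, together with the fusion condition $\ccl_{\pi}(G)=\ccl(A)$, into the conclusion $B=1$ — presumably by a further induction peeling off minimal normal (necessarily $\pi$-) subgroups, or by Sylow-system arguments that pin down $G_s\cap A$ as a Hall $\pi$-subgroup of $G_s$ — is the step I expect to require the most care.
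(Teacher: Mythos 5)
Your proof of the $\Conpi$ part is correct and is essentially the paper's own argument: decompose $\ccl(G)=\sum_{[s]}\ccl_{\pi'}(G_s)$ over the classes of $\pi$-elements, bound the number of summands by $\ccl(A)$ via Hall's theorem and each summand by $\ccl(S_s)$. You are also right that the hypothesis as printed should read $\ccl(S_x)\le\ccl(B)$ rather than $\ccl(S_x)\le\ccl(A)$; the paper's own proof confirms this.

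The $\Conpi^*$ part, however, has a genuine gap, which you yourself flag: the base case $\cO_{\pi'}(G)=1$ of your proposed induction is exactly the hard content of the statement, and nothing in your sketch closes it. (The inductive step is also shakier than it looks: centralisers in $G/L$ can be strictly larger than images of centralisers, so verifying that $\overline{G}$ inherits the condition $|G_{\bar{s}}|_{\pi'}=|\overline{B}|$ requires an argument you have not supplied.) The idea you are missing is Sylow-system reduction, Lemma \ref{xilem}. For each conjugacy class of $\pi$-elements one may choose a representative $r$ with $\Sigma\searrow G_r$; by Lemma \ref{xilem}(i) this forces $r\in A$, and it makes $B\cap G_r$ a Hall $\pi'$-subgroup of $G_r$ for the \emph{fixed} $B\in\Sigma$. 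The equality analysis gives $S_r\cong B$, i.e.\ $|G_r|_{\pi'}=|B|$, hence $B\le G_r$ for every such representative simultaneously --- this is precisely the uniformity your approach lacks (you only obtain that each class is centralised by \emph{some} conjugate of $B$, and intersecting over classes loses control). Then $r^G=r^{BA}=r^A\subseteq A$ for every $r$, and since the $r$ meet every class of $\pi$-elements, $A$ is exactly the set of $\pi$-elements of $G$ and is therefore normal; one concludes via Theorem \ref{nsgpconpi} (or indeed via your own final paragraph, which is sound once $A\unlhd G$ is known). No induction on $|G|$ and no analysis of $F(G)$ is needed.
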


We are also able to prove the following special case, as an elementary consequence of Hall's theorem:

\begin{thma} Let $G$ be a finite soluble group, and suppose $G/(\cO_{\pi}(G)\cO_{\pi'}(G))$ has a normal Hall $\pi$-subgroup.  Then $G$ is $\Conpi^*$.\end{thma}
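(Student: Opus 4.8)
\emph{Reducing to a normal Hall $\pi$-subgroup.} Write $N=\cO_{\pi'}(G)$. Since $\cO_\pi(G)\cO_{\pi'}(G)\supseteq N$, the quotient $G/N$ still has a normal Hall $\pi$-subgroup; let $K\unlhd G$ be its full preimage, so $N\unlhd K\unlhd G$ with $K/N$ a $\pi$-group and $G/K$ a $\pi'$-group. As $N$ is the normal Hall $\pi'$-subgroup of $K$, Hall's theorem gives $K=N\rtimes A_0$ with $A_0\in\Hpi(G)$; every $\pi$-element of $G$ lies in $K$, and since $\Conpi$ and $\Conpi^*$ depend only on the isomorphism types of $A$ and $B$, we may take $A\in\Hpi(G)$ with $A\le K$ and $B\in\Hpip(G)$ with $N\le B$. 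The feature to exploit is that $K/N$ is a \emph{normal} Hall $\pi$-subgroup of $G/N$ while $\cO_{\pi'}(G/N)=1$; in particular, if $N=1$ then $K=A\unlhd G$.

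\emph{The counting identity.} Writing each element as the commuting product of its $\pi$- and $\pi'$-parts gives
\[ \ccl(G)=\sum_x\ccl_{\pi'}(G_x), \]
the sum over representatives $x$ of the $G$-classes of $\pi$-elements, each of which may be chosen in $A$. If $m_x$ denotes the number of $A$-classes inside $x^G\cap A$, then $\sum_x m_x=\ccl(A)$, because $A$ consists of $\pi$-elements and the $G$-classes of $\pi$-elements partition it. So it suffices to prove $\ccl_{\pi'}(G_x)\le\ccl(B)m_x$ for each $\pi$-element $x\in A$, and to track when equality holds throughout.

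\emph{The case $N=1$.} Here $A\unlhd G$ and $G=A\rtimes B$. For a $\pi$-element $x\in A$, all $\pi$-elements of $G_x$ lie in $A$, so $A_x=A\cap G_x$ is a normal Hall $\pi$-subgroup of $G_x$; hence $G_x=A_x\rtimes S_x$ with $S_x\cong G_xA/A\le G/A\cong B$, and the $\pi'$-classes of $G_x$ biject with those of $G_x/A_x\cong S_x$ (cf.\ Theorem \ref{nsgpconpi}), so $\ccl_{\pi'}(G_x)=\ccl(S_x)$. Since $A\unlhd G$, all $A$-classes inside $x^G=x^G\cap A$ have size $|A:A_x|$, whence $m_x=|x^G|/|x^A|=|G:G_xA|=|B:S_x|$. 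Comparing numbers of commuting pairs gives $\ccl(S_x)|S_x|\le\ccl(B)|B|$, i.e.\ $\ccl(S_x)\le\ccl(B)|B:S_x|=\ccl(B)m_x$, with equality only when every commuting pair of $B$ lies in $S_x$, i.e.\ (taking one coordinate trivial) when $S_x=B$. This shows $G$ is $\Conpi$; and $\ccl(G)=\ccl(A)\ccl(B)$ forces $G_xA=G$ for every $\pi$-element $x$, hence every $b\in B$ carries each $x\in A$ to an $A$-conjugate of $x$. Thus $B$ stabilises every $A$-class, so by Glauberman's lemma (using $(|A|,|B|)=1$) $C_A(B)$ meets every $A$-class; a proper subgroup cannot do this, so $C_A(B)=A$, i.e.\ $[A,B]=1$ and $G\cong A\times B$. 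Hence $G$ is $\Conpi^*$.

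\emph{The general case, and the main obstacle.} For $N\ne1$, induct on $|G|$: $G/N$ has a normal Hall $\pi$-subgroup and smaller order, so is $\Conpi^*$ by the previous paragraph (its $\Conpi^*$ clause invoking the inductive hypothesis on proper sections), with $AN/N\cong A$ and $BN/N=B/N$. By coprime action, $(G/N)_{xN}=G_xN/N\cong G_x/N_x$ for every $\pi$-element $x$, so the counting identities for $G$ and $G/N$ combine to
\[ \ccl(G)-\ccl(G/N)=\sum_x\bigl(\ccl_{\pi'}(G_x)-\ccl_{\pi'}(G_x/N_x)\bigr); \]
with $\ccl(G/N)\le\ccl(A)\ccl(B/N)$, the bound $\ccl(G)\le\ccl(A)\ccl(B)$ reduces to showing this defect is at most $\ccl(A)\bigl(\ccl(B)-\ccl(B/N)\bigr)$, which I would attack through the per-class estimate $\ccl_{\pi'}(G_x)-\ccl_{\pi'}(G_x/N_x)\le m_x\bigl(\ccl(B)-\ccl(B/N)\bigr)$, noting that each $G_x$ again satisfies the hypothesis with $N_x\le\cO_{\pi'}(G_x)$. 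I expect this per-class defect estimate — and pushing the resulting equality information back through the argument of the previous paragraph — to be the main obstacle: the crude bound $\ccl_{\pi'}(G_x)\le\ccl(B)$ is genuinely false here (a Hall $\pi'$-subgroup of a centraliser can have more classes than $B$), so the multiplicities $m_x$ must be carried throughout, and it is exactly in the defect estimate and in the fusion step ``$G_xA=G$ for every $\pi$-element $x$'' $\Rightarrow$ ``$[A,B]=1$'' that the choice $N=\cO_{\pi'}(G)$ and Hall's conjugacy theorem do the real work.
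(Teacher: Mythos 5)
Your argument is complete only in the special case $\cO_{\pi'}(G)=1$, where the hypothesis forces $A\lhd G$; that case is handled correctly (and is essentially Theorem \ref{nsgpconpi} together with an equality analysis via Glauberman's lemma, which the paper already records as known). The actual content of Theorem A lies in the general case, and there your proof stops: the per-class defect estimate $\ccl_{\pi'}(G_x)-\ccl_{\pi'}(G_x/N_x)\le m_x\bigl(\ccl(B)-\ccl(B/N)\bigr)$ is asserted as the plan of attack but never proved, and you yourself flag it as ``the main obstacle.'' This is a genuine gap, not a routine verification: already for $x=1$ the estimate reads $\ccl_{\pi'}(G)-\ccl_{\pi'}(G/N)\le\ccl(B)-\ccl(B/N)$, which is a difference inequality of exactly the kind the paper isolates as the (open) $\SConpi$ condition; nothing in your setup makes it accessible, and the induction on $G/N$ gives you no leverage on it because the whole difficulty has been pushed into that one step.

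The paper avoids the quotient-and-defect strategy entirely and instead works inside $G$ with the sum over $A$-classes, $\ccl(G)=\sum_{t\in T}\ccl_{\pi'}(G_{z_t})/f^G_A(z_t)$. You correctly identify that the crude bound $\ccl_{\pi'}(G_x)\le\ccl(B)$ fails and that the fusion multiplicities must be carried along; the paper's resolution is to compute $f^G_A(z_t)=|B\upt:{B\upt}_{z_t}|$ (this is where the hypothesis enters, via $AB\un\lhd G$ and $B=B\upt B\un$), and then to interpolate the subgroup $B_{*z_t}=\bigcap_{z\in\langle z_t\rangle}B^z$, the largest subgroup of $B$ normalised by $z_t$. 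The chain $\ccl_{\pi'}(G_{z_t})\le\ccl(B_{z_t})\le\ccl(B_{*z_t})\le\ccl(B)\,|B:B_{*z_t}|\le\ccl(B)\,|B\upt:{B\upt}_{z_t}|$ then does the work: the middle inequality is Lemma \ref{alhdg} (an elementary Glauberman-type statement, $\ccl(K_y)\le\ccl(K)$ when $y$ acts coprimely on $K$, proved by induction), the next is Corollary \ref{sgpccl}, and the last uses $B_{z_t}B\un\le B_{*z_t}$. If you want to complete your write-up, you should replace the inductive defect estimate with an argument of this shape; as it stands, the theorem is not proved.
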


However, the erratic behaviour of the orders of centralisers when passing to subgroups, even normal subgroups and Hall subgroups, means there is limited scope for further results by purely elementary means.  More promising is to consider $\ccl(G)$ as the number of irreducible complex characters of the group.

\paragraph{Notation}Given a finite group $H$, we denote by $\Irr(H)$ the set of irreducible complex characters of $H$.  Given a subgroup $K$ of $H$ and $\theta \in \Irr(K)$, the set $\Irr(H|\theta)$ consists of all $\chi \in \Irr(H)$ such that $[\chi_K, \theta] \neq 0$, where $\chi_K$ is the restriction of $\chi$ to $K$.  We define an action of $\mathrm{N}_H (K)$ on $\Irr(K)$, given by $\theta^h(k^h) = \theta(k)$ for all $k \in K$ and $h \in \mathrm{N}_H (K)$.  The \emph{centraliser} $H_{\theta}$ of $\theta$ is the set
\[\{h \in H|K^h = K, \theta^h = \theta\} \]
and $\Irr_H(K)$ consists of all $\theta \in \Irr(K)$ such that $H_{\theta} = \mathrm{N}_H (K)$.  Similarly, if $T$ is a set of automorphisms of $H$, \[\Irr_T(H) = \{ \chi \in \Irr(H) | \chi^t = \chi \; \forall t \in T \}\]

Characters are at least somewhat well-behaved when passing to normal subgroups, thanks to Clifford's theorem.  There are also character-theoretic results concerning coprime action, the most important of which for this paper is a correspondence due to Glauberman (\cite{5}): for every pair of groups $G$ and $S$ such that $S$ is soluble and acts on $G$ and $(|G|,|S|)=1$, there is a canonical bijection from $\Irr_S(G)$ to $\Irr(G_S)$.  (See \cite{5} or Chapter 13 of \cite{6} for a detailed description of this bijection.  A more general form of this correspondence, incorporating work of Isaacs, is now also known as the Glauberman-Isaacs correspondence.)\\
  
Using this correspondence, we obtain the following:

\begin{thmb}
Let $G$ be a finite soluble group and let $N \lhd G$ such that $G/N$ is cyclic and $(|N|,|G/N|)=1$.  Suppose that every subgroup of $N$ (including $N$ itself) is $\Conpi$.  Then $G$ is $\Conpi$.  If in addition $N$ is $\Conpi^*$, then $G$ is $\Conpi^*$.
\end{thmb}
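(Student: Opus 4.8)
The plan is to reduce, by induction on $|G:N|$, to the case where $G/N$ is a cyclic $p$-group for a single prime $p$, and then to compute $\ccl(G)$, $\ccl(A)$, $\ccl(B)$ explicitly by combining Clifford theory with the Glauberman correspondence. For the reduction I would first observe that if $L \le M \lhd G$ with $M/N$ cyclic and $(|N|,|M/N|)=1$, then $L \cap N \lhd L$, the quotient $L/(L\cap N)$ embeds in $M/N$, and every subgroup of $L \cap N$ is a subgroup of $N$ and hence $\Conpi$; so the inductive hypothesis applies to $(L,L\cap N)$ whenever $|L:L\cap N|<|G:N|$. Now if $G/N$ is not a prime power, choose a prime $p \mid |G:N|$ and let $M$ be the subgroup of $G$ for which $M/N$ is the Hall $p'$-subgroup of the cyclic group $G/N$. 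Then $M \lhd G$, the quotient $M/N$ is cyclic of order prime to $|N|$ and strictly smaller than $|G:N|$, the quotient $G/M$ is a cyclic $p$-group, and $(|M|,|G:M|)=1$ since $p$ divides neither $|N|$ nor $|M/N|$. By induction every subgroup of $M$ is $\Conpi$, and $M$ is $\Conpi^*$ whenever $N$ is; so it is enough to treat the case where $G/N$ is a cyclic $p$-group with $(p,|N|)=1$.

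In that case Schur--Zassenhaus lets me write $G = N:\langle h\rangle$ with $\langle h\rangle \cong C_{p^a}$; let $1 = K_0 < K_1 < \cdots < K_a = \langle h\rangle$ be the subgroups of $\langle h\rangle$, so $|K_i|=p^i$, and fix $\langle h\rangle$-invariant Hall subgroups $P \in \Hpi(N)$ and $Q \in \Hpip(N)$. Since $G/N$ is cyclic, each $\theta \in \Irr(N)$ extends to its stabiliser $G_\theta$ (whose image in $G/N$ is some $K_i$), so by the Clifford correspondence the number of irreducible characters of $G$ lying over the $G$-orbit of $\theta$ equals $|G_\theta/N|$. Summing over $G$-orbits, stratifying by the order of $G_\theta/N$, and using the Glauberman correspondence — legitimate since $\langle h\rangle$ acts coprimely on $N$, and likewise on $P$ and $Q$ — to identify $|\Irr_{K_i}(N)|$ with $\ccl(C_N(K_i))$, one obtains
\[ \ccl(G) = \frac{1}{p^{a}}\Big(\, d_0 + (p^{2}-1)\sum_{i=1}^{a} d_i\, p^{2i-2}\,\Big), \qquad d_i := \ccl(C_N(K_i)), \]
together with the analogous identities for $\ccl(P:\langle h\rangle)$ and $\ccl(Q:\langle h\rangle)$ in which each $d_i$ is replaced by $\ccl(C_P(K_i))$, resp.\ $\ccl(C_Q(K_i))$.

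By the symmetry $\Conpi = \mathrm{Con}_{\pi'}$ (and invariance of the hypotheses under swapping $\pi$ and $\pi'$) I may assume $p \in \pi'$, so $A = P$ and $B = Q:\langle h\rangle$. Each $C_N(K_i)$ is a subgroup of $N$, hence $\Conpi$, with Hall subgroups $C_P(K_i)$ and $C_Q(K_i)$, so $d_i \le \ccl(C_P(K_i))\,\ccl(C_Q(K_i))$; and as $\langle h\rangle$ acts coprimely on the $\pi$-group $P$, Glauberman gives $\ccl(C_P(K_i)) = |\Irr_{K_i}(P)| \le \ccl(P) = \ccl(A)$. Hence $d_i \le \ccl(A)\,\ccl(C_Q(K_i))$ for every $i$ (for $i=0$ this is just $\ccl(N)\le\ccl(P)\ccl(Q)$), and substituting into the displayed formulas term by term (all coefficients being non-negative) gives $\ccl(G) \le \ccl(A)\,\ccl(B)$. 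Moreover, if equality holds then — the relevant coefficients being strictly positive — every one of these inequalities is an equality: from $d_0 = \ccl(P)\ccl(Q)$ and the assumption that $N$ is $\Conpi^*$ one gets $N \cong P\times Q$, while $\ccl(C_P(K_a)) = \ccl(P)$ forces $h$ to fix every irreducible character of $P$, hence every conjugacy class of $P$, hence (since $\langle h\rangle$ acts coprimely on $P$ and no finite group is the union of the conjugates of a proper subgroup) $C_P(h) = P$; then $G = P\times(Q:\langle h\rangle) = A\times B$. Since this $\Conpi^*$ conclusion survives the reduction of the first paragraph, the theorem follows.

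The step I expect to need the most care is the reduction: the intermediate normal subgroup $M$ must be chosen so that $M/N$ and $G/M$ remain coprime to the appropriate orders, which is why one strips off an entire Sylow subgroup of $G/N$ rather than a single prime. This coprimeness is exactly what allows the Glauberman correspondence to be applied to $N$, $P$ and $Q$ simultaneously, and hence what makes it possible to compare the character count of $G$ with that of $A \times B$ coefficient by coefficient.
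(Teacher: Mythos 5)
Your proof is correct and follows essentially the same route as the paper's: both rest on the extension of each $\nu \in \Irr(N)$ to its stabiliser (cyclic over $N$), the Glauberman correspondence identifying the number of $K$-invariant characters of $N$ with $\ccl(C_N(K))$, the splitting $C_N(K) = C_P(K)C_Q(K)$ so that the hypothesis on subgroups of $N$ applies, and a second application of Glauberman to bound $\ccl(C_P(K))$ by $\ccl(P)$. Your explicit reduction to a cyclic quotient of prime-power order followed by a term-by-term comparison of the resulting class-number formulas is a repackaging of the paper's comparison of the counting functions $\iota_k(N;G)$ and $\iota_k(N^*;G^*)$ together with its induction on $|G|$, with the minor merit of spelling out the reduction that the paper only builds into its restated hypothesis that $G/N$ be a $\pi$-group.
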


Another situation in which character theory will pay dividends is when $N$ is an elementary abelian normal subgroup of $G$ satisfying certain extra conditions, the most important of which is given below.

\paragraph{Definition}Let $N$ be an elementary abelian normal subgroup of $G$; we may suppose without loss that $N \leq A$.  Say $G$ has \emph{balanced action} on $\Irr(N)$ if $A_\nu$ is a Hall $\pi$-subgroup of $G_\nu$ for all $\nu \in \Irr(N)$.  (We make a similar definition if $N \leq B$.  Note that thanks to Hall's theorem, the choice of $A$ and $B$ is unimportant.)\\

Note that in every $G$-orbit of $\Irr(N)$, there exists some $\nu$ such that $A_\nu$ is a Hall $\pi$-subgroup of $G_\nu$, and if $N$ is of prime order, all non-trivial characters have the same centraliser, namely the centraliser of $N$ in $G$.  So in particular, if $N$ is cyclic then $G$ automatically has balanced action on $\Irr(N)$.\\

An example of imbalanced action is given by $G = \mathrm{Sym}(4)$, with $\pi=\{2\}$, and $N$ the normal subgroup of order $4$.  The three non-trivial $\nu \in \Irr(N)$ are conjugate in $G$, each with centraliser containing a Hall $\pi$-subgroup of $G$, but only one of these is invariant in $A$.\\

For the next theorem, we will also need to define a new property of finite soluble groups.  Say $G$ is $\SConpi$ if it satisfies the following conditions:

(i) $G$ is $\Conpi$;
 
(ii)whenever $N$ is a central subgroup of $G$ of order $p \in \pi$, the following holds:

\[ \ccl(G) - \ccl(G/N) \leq (\ccl(A) - \ccl(A/N))\ccl(B) \]

(iii)whenever $N$ is a normal subgroup of $G$ of order $p \in \pi'$, an analogous condition to (ii) holds.\\

As with $\Conpi$, the following question is currently unanswered:

\paragraph{Question}Are there any finite soluble groups which are not $\SConpi$?

\begin{thmc}
Let $G$ be a finite soluble group, and $N$ an elementary abelian normal subgroup of $G$.  Suppose $G$ has balanced action on $\Irr(N)$, and that $G_{\nu}/N$ is $\Conpi$ for every $\nu \in \Irr(N)$.  Suppose also that one of the following holds:\\
 
(i)Every $\nu \in \Irr(N)$ extends to $G$.  (For this to occur, it is sufficient that $N$ be complemented in $G$.)\\

(ii)For every $\nu \in \Irr(N)$, the group $H=G_\nu/\krn(\nu)$ is $\SConpi$.\\

Then $G$ is $\Conpi$.  If $G/N$ is $\Conpi^*$, then $G$ is $\Conpi^*$.
\end{thmc}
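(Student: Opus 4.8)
The plan is to study $\Irr(G)$ and $\Irr(A)$ by Clifford theory relative to $N$. First I would reduce to the case $N \leq A$, so that $N$ is an elementary abelian $p$-group with $p \in \pi$ and $N \unlhd A$. Write $R$ for a transversal of the $G$-orbits on $\Irr(N)$, write $G_\nu$ for the inertia group of $\nu$ in $G$ and $A_\nu := A \cap G_\nu$ for its inertia group in $A$, and fix a Hall $\pi'$-subgroup $C_\nu$ of $G_\nu$; by Hall's theorem $C_\nu$ is isomorphic to a subgroup of $B$. The Clifford correspondence gives $\ccl(G) = \sum_{\nu \in R}|\Irr(G_\nu|\nu)|$, and likewise $\ccl(A) = \sum_{\mu}|\Irr(A_\mu|\mu)|$ with $\mu$ running over a transversal of the $A$-orbits on $\Irr(N)$. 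The whole argument then rests on the single inequality
\[ |\Irr(G_\nu|\nu)| \;\leq\; |\Irr(A_\nu|\nu)|\cdot\ccl(C_\nu) \qquad (\ast) \]
for every $\nu \in R$.

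To prove $(\ast)$, I would use balanced action to guarantee that $A_\nu$ is a Hall $\pi$-subgroup of $G_\nu$; since $N$ is a $\pi$-group contained in $A_\nu$ and meets $C_\nu$ trivially, this makes $A_\nu/N$ a Hall $\pi$-subgroup of $G_\nu/N$ and identifies $C_\nu$ with a Hall $\pi'$-subgroup of $G_\nu/N$. In case (i), $\nu$ extends to a linear character of $G_\nu$ (and of $A_\nu$), so by Gallagher's theorem $|\Irr(G_\nu|\nu)| = \ccl(G_\nu/N)$ and $|\Irr(A_\nu|\nu)| = \ccl(A_\nu/N)$, and $(\ast)$ is exactly the hypothesis that $G_\nu/N$ is $\Conpi$. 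In case (ii), put $H = G_\nu/\krn(\nu)$ and $Z = N/\krn(\nu)$; because $G_\nu$ fixes the faithful linear character $\bar\nu$ of $Z$ afforded by $\nu$, the group $Z$ is central in $H$ of order $p$, and applying a Galois automorphism to the $p-1$ non-trivial characters of $Z$ shows $\ccl(H) - \ccl(H/Z) = (p-1)|\Irr(H|\bar\nu)| = (p-1)|\Irr(G_\nu|\nu)|$, with the same computation inside the Hall $\pi$-subgroup $A_\nu/\krn(\nu)$ of $H$; then $(\ast)$ follows on dividing clause (ii) of the definition of $\SConpi$ for $H$ by $p-1$ and identifying the Hall $\pi'$-subgroup of $H$ with $C_\nu$.

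Next I would count $A$-orbits. Since every point-stabiliser is a Hall $\pi$-subgroup of the corresponding (conjugate) inertia group, all $A$-orbits inside a fixed $G$-orbit $\nu^G$ have equal length, and an index computation shows there are exactly $|B|/|C_\nu|$ of them; moreover a conjugation argument matching the two Hall $\pi$-subgroups inside a common inertia group shows $|\Irr(A_\mu|\mu)|$ is constant as $\mu$ runs through $\nu^G$. Hence $\ccl(A) = \sum_{\nu \in R}\frac{|B|}{|C_\nu|}|\Irr(A_\nu|\nu)|$, and combining this with $(\ast)$ reduces $\ccl(G) \leq \ccl(A)\ccl(B)$ to the inequality $\ccl(C_\nu)|C_\nu| \leq \ccl(B)|B|$, which holds because each side counts the commuting ordered pairs in its group and $C_\nu$ embeds in $B$; so $G$ is $\Conpi$. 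For the refinement, suppose $G/N$ is $\Conpi^*$ and $\ccl(G) = \ccl(A)\ccl(B)$, forcing equality throughout. Equality in the commuting-pairs inequality forces $C_\nu = B$ for all $\nu$, so every $G$-orbit on $\Irr(N)$ has $\pi$-number length, hence $B$ fixes $\Irr(N)$ pointwise and, by Brauer's permutation lemma, centralises $N$; equality in $(\ast)$ for $\nu$ trivial gives $\ccl(G/N) = \ccl(A/N)\ccl(B)$, so $G/N \cong (A/N)\times B$, whence $[A,B] \leq N$ and therefore $A \unlhd G$; finally $B$ acts coprimely on $A$ with $[A,B,B] \leq [N,B] = 1$, so the standard identity $[A,B] = [A,B,B]$ for coprime action yields $[A,B] = 1$, i.e.\ $G \cong A \times B$.

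The hard part will be $(\ast)$ together with the bookkeeping around it: one must keep track of two normal sections ($N$ and $\krn(\nu)$) of the inertia group, verify that the relevant Hall subgroups genuinely correspond under the quotient maps, and in case (ii) pin down the factor $p-1$. The place where balanced action is indispensable — and the reason the hypothesis cannot simply be dropped — is precisely the passage from $|\Irr(G_\nu|\nu)|$ to a statement about $A_\nu/N$ and $C_\nu$; this is exactly what breaks down for $G = \Sym$.
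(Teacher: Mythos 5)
Your main argument is essentially the paper's own proof. Your key inequality $(\ast)$ is exactly the paper's inequality \eqref{baineq}, derived the same way: Gallagher's extension theorem in case (i), and in case (ii) the central section $N/\krn(\nu)$ of order $p$ in $H$, the Galois-conjugacy count giving the factor $p-1$, and clause (ii) of the definition of $\SConpi$. Your orbit-by-orbit bookkeeping, culminating in $\ccl(C_\nu)|C_\nu| \leq \ccl(B)|B|$, is the paper's use of balanced action together with Corollary \ref{sgpccl}, just written over a transversal instead of over all of $\Irr(N)$ with division by orbit lengths. All of this is correct.

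There is, however, one genuine gap in your endgame for $\Conpi^*$. Equality forces each $G_\nu$ to contain a \emph{conjugate} of $B$ (equivalently, each $G$-orbit on $\Irr(N)$ has $\pi$-number length), but you then assert that $B$ itself fixes $\Irr(N)$ pointwise. That does not follow: a $\pi'$-subgroup acting on a transitive $G$-set of $\pi$-number length in which every point stabiliser contains some conjugate of $B$ need not act trivially (compare a subgroup of order $3$ in the alternating group of degree $4$ acting on four points, where it fixes one point and moves the rest). So $[N,B]=1$, and hence your three-subgroups argument $[A,B]=[A,B,B]\leq[N,B]=1$, are not justified as written. The repair is immediate and is what the paper does: you have already correctly deduced from the trivial character that $\ccl(G/N)=\ccl(A/N)\ccl(B)$ and hence, since $G/N$ is $\Conpi^*$, that $[A,B]\leq N$, i.e.\ $A\lhd G$; then $\mathrm{ht}_\pi(G)\leq 1\frac{1}{2}$ and Theorem A yields $G\cong A\times B$ directly.
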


The conditions for $\SConpi$ may be stronger than $\Conpi$ in general.  However, if $Z(G) \cap G'=1$, we can obtain $\SConpi$ from $\Conpi$.  Note that if $G$ has trivial centre, $G$ is evidently $\SConpi$ if and only if it is $\Conpi$.

\begin{thmd}
Let $G$ be a finite soluble group.  Suppose that for some central subgroup $M$ of prime order, every subgroup of $G/M$ is $\Conpi$ and $M \cap G' = 1$.  Then $G$ is $\Conpi$.  If in addition $Z(G) \cap G'=1$, then $G$ is $\SConpi$.  If $G/M$ is $\Conpi^*$ then $G$ is $\Conpi^*$.
\end{thmd}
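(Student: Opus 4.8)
\emph{Proof sketch.}
My plan is to obtain the $\Conpi$ and $\Conpi^{*}$ assertions directly from Theorem~C applied with $N:=M$. Since $M$ is cyclic, $G$ automatically has balanced action on $\Irr(M)$; since $M$ is central, $G_{\nu}=G$, so $G_{\nu}/M=G/M$ is $\Conpi$ by hypothesis; and hypothesis~(i) of Theorem~C holds because $M\cap G'=1$ forces every (necessarily linear) character of $M$ to extend to $G$ — the standard consequence of $M$ embedding, via $m\mapsto mG'$, into the abelian group $G/G'$, restriction of characters to a subgroup of which is surjective. Theorem~C then yields that $G$ is $\Conpi$, and that $G$ is $\Conpi^{*}$ whenever $G/M$ is; in the equality case Theorem~C gives $G/M\cong(A/M)\times B$, so $[A,B]\le M\cap G'=1$ and $G=A\times B$.

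The extra tool I would isolate for the $\SConpi$ assertion is a scaling lemma: \emph{if $H$ is finite soluble and $L\le Z(H)$ has prime order $\ell$ with $L\cap H'=1$, then $\ccl(H)=\ell\,\ccl(H/L)$.} As above, every $\lambda\in\Irr(L)$ extends to some $\hat\lambda\in\Irr(H)$; since $L$ is central, $\Irr(H)=\bigsqcup_{\lambda\in\Irr(L)}\Irr(H\mid\lambda)$ and $\chi\mapsto\chi\hat\lambda$ gives a bijection $\Irr(H/L)=\Irr(H\mid 1_{L})\to\Irr(H\mid\lambda)$. Applied to $(H,L)=(G,M)$ this re-derives $\ccl(G)=p\,\ccl(G/M)$, hence once more that $G$ is $\Conpi$ if and only if $G/M$ is.

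Now assume also $Z(G)\cap G'=1$. Condition~(i) of $\SConpi$ is the first conclusion. For condition~(ii) I would take $N\le Z(G)$ of order $q\in\pi$; then $N\cap G'\le Z(G)\cap G'=1$, so $N\le\cO_{\pi}(G)\le A$, and the scaling lemma applied to $G$ and to $A$ (legitimate since $N\le Z(A)$ and $N\cap A'\le N\cap G'=1$) gives $\ccl(G)=q\,\ccl(G/N)$ and $\ccl(A)=q\,\ccl(A/N)$. The inequality in~(ii) then loses a common factor $q-1$, reducing to $\ccl(G/N)\le\ccl(A/N)\,\ccl(B)$; that is just ``$G/N$ is $\Conpi$'', which follows on dividing $\ccl(G)\le\ccl(A)\,\ccl(B)$ by $q$. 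The central case of condition~(iii) is handled identically with $A$ and $B$ exchanged.

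The genuinely hard case is condition~(iii) for a \emph{non-central} normal subgroup $N$ of order $q\in\pi'$: since $|N|=q$ is prime, $[N,G]=N$, so $N\le G'$ and scaling is unavailable. Here I would use the $\pi$-part decomposition $\ccl(H)=\sum_{[s]}\ccl_{\pi'}(C_{H}(s))$, the sum over conjugacy classes of $\pi$-elements $s$ of $H$. As $N$ is a normal $\pi'$-subgroup, the classes of $\pi$-elements of $G$ correspond to those of $G/N$, and $C_{G/N}(\bar s)=C_{G}(s)N/N\cong C_{G}(s)/C_{N}(s)$ by coprime action, so
\[
\ccl(G)-\ccl(G/N)=\sum_{[s]}\Bigl(\ccl_{\pi'}\bigl(C_{G}(s)\bigr)-\ccl_{\pi'}\bigl(C_{G}(s)/C_{N}(s)\bigr)\Bigr),
\]
a sum of $\ccl_{\pi}(G)\le\ccl(A)$ terms (every class of $\pi$-elements of $G$ meets $A$). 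It would then suffice to bound each term by $\ccl(B)-\ccl(B/N)$. Feeding the normal $\pi'$-subgroup $C_{N}(s)\lhd C_{G}(s)$ (of order $1$ or $q$) into Theorem~\ref{nsgpconpi} shows the term is at most $\bigl(|C_{N}(s)|-1\bigr)\,\ccl_{\pi'}\bigl(C_{G}(s)/C_{N}(s)\bigr)$, hence is $0$ unless $s$ centralises $N$; for such $s$ one is left having to dominate $\ccl_{\pi'}\bigl(C_{G}(s)/N\bigr)$ by $\ccl(B/N)$ through the coprime action of $\langle s\rangle$ on a Hall $\pi'$-subgroup of $G$ containing $N$. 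This last comparison is where I expect essentially all the difficulty to sit: it runs straight into the phenomenon emphasised in the introduction that $\ccl$ can \emph{increase} on passage to a subgroup, so the hypotheses $Z(G)\cap G'=1$ and ``every subgroup of $G/M$ is $\Conpi$'' (needed for the sections that occur here), together with the Glauberman correspondence for the relevant coprime action, have to be exploited carefully rather than by any crude termwise estimate.
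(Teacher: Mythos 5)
Your treatment of the $\Conpi$ and $\Conpi^*$ assertions is correct and is essentially the paper's own argument: the paper likewise feeds $N=M$ into Theorem C, case (i), using balanced action (automatic for a central subgroup) and the fact that $M\cap G'=1$ forces every $\mu\in\Irr(M)$ to extend to a linear character of $G$ (the paper phrases this via $G'$ being the intersection of the kernels of the linear characters, plus a Galois automorphism, rather than via the embedding of $M$ into $G/G'$, but it is the same observation). Your handling of the central cases of the $\SConpi$ conditions is also correct and amounts to the same computation as the paper's: the paper notes that since $\mu$ extends to $G$ and hence to $A$, one has $|\Irr(G|\mu)|=\ccl(G/M)$ and $|\Irr(A|\mu)|=\ccl(A/M)$, so by Eq.~\eqref{cyceq} the required inequality collapses to $\ccl(G/M)\le\ccl(A/M)\ccl(B)$; your scaling lemma $\ccl(G)=p\,\ccl(G/M)$ is exactly this collapse. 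One small point in your favour: by dividing $\ccl(G)\le\ccl(A)\ccl(B)$ by $q$ you obtain $\ccl(G/N)\le\ccl(A/N)\ccl(B)$ for every central $N$ of prime order with $N\cap G'=1$ directly from ``$G$ is $\Conpi$'', which establishes the $\SConpi$ clause under the hypothesis $Z(G)\cap G'=1$ exactly as stated here; the paper's proof instead invokes ``$G/N$ is $\Conpi$'' for each such $N$ and so formally relies on the stronger hypothesis given in the body of the paper (that every central subgroup of prime order satisfies both stated conditions).

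The case you identify as ``genuinely hard'' --- clause (iii) of $\SConpi$ for a \emph{non-central} normal subgroup $N$ of order $q\in\pi'$ --- is not actually part of what needs to be proved, and the paper does not prove it either. Although the definition of $\SConpi$ literally says ``normal subgroup of $G$ of order $p\in\pi'$'' in clause (iii), the intended reading is the exact analogue of clause (ii) with $\pi$ and $\pi'$ exchanged, i.e.\ with $N$ central: this is how $\SConpi$ is consumed in the proof of Theorem C (the subgroup $K=N/\krn(\nu)$ there lies in $Z(H)$), and it is the only reading consistent with the paper's remark that a group with trivial centre is evidently $\SConpi$ if and only if it is $\Conpi$. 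The paper's own proof of Theorem D accordingly only ever considers central subgroups of prime order. So your sketch for the non-central case --- which, as you rightly note, runs into the unresolved difficulty that $\ccl$ can increase on passage to subgroups, and which you do not complete --- can simply be deleted; with clause (iii) read as the central analogue of (ii), your proof is complete and matches the paper's.
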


We also give a possible inductive approach to showing $\mathcal{K}$-groups are $\Conpi$, where $\mathcal{K}$ is a class of finite soluble groups closed under taking quotients:

\begin{thme}Let $\mathcal{K}$ be a class of finite soluble groups closed under quotients.  Suppose that any $\mathcal{K}$-group $G$ satisfies the following:\\
 
Assume Hypothesis \ref{solhyp}.  The number of faithful irreducible complex characters of $G$ is at most $|\Irr_G(A \times B)|$, where $\Irr_G(A \times B)$ consists of those irreducible complex characters $\chi$ of $A \times B$ such that $\krn(\chi)$ does not contain any subgroups of $A$ or $B$ that are normal in $G$.\\

Then every $\mathcal{K}$-group is $\SConpi$.
\end{thme}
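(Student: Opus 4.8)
The plan is to bypass induction entirely and to feed the hypothesis of the theorem into \emph{every} quotient of $G$ simultaneously. Write $\mathcal{F}(H)$ for the set of faithful irreducible characters of a finite group $H$. Inflation identifies $\mathcal{F}(G/L)$ with $\{\chi\in\Irr(G):\krn(\chi)=L\}$, so $\Irr(G)=\bigsqcup_{L\unlhd G}\mathcal{F}(G/L)$ and $\ccl(G)=\sum_{L\unlhd G}|\mathcal{F}(G/L)|$. For each $L\unlhd G$ the quotient $G/L$ is again a $\mathcal{K}$-group, and $AL/L$, $BL/L$ are permutable Hall $\pi$- and $\pi'$-subgroups of it, hence (by Hall's theorem) lie in a common Sylow system; so Hypothesis \ref{solhyp} holds for $G/L$ and the hypothesis of the theorem gives $|\mathcal{F}(G/L)|\leq|\Irr_{G/L}(AL/L\times BL/L)|$. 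Inflating characters of $AL/L$ and $BL/L$ back to $A$ and $B$ identifies $\Irr_{G/L}(AL/L\times BL/L)$ with the set
\begin{align*}
J_L:=\{(\alpha,\beta)\in\Irr(A)\times\Irr(B)\ :\ &A\cap L\leq\krn\alpha,\ B\cap L\leq\krn\beta,\\
&\mathrm{core}_G((\krn\alpha)L)=L=\mathrm{core}_G((\krn\beta)L)\},
\end{align*}
where $\mathrm{core}_G(X)$ denotes the largest normal subgroup of $G$ contained in $X$; the two core conditions are exactly the translation of ``$\krn(\bar\alpha)=(\krn\alpha)L/L$, resp.\ $\krn(\bar\beta)$, contains no nontrivial subgroup of $AL/L$, resp.\ $BL/L$, that is normal in $G/L$''. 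Thus $|\mathcal{F}(G/L)|\leq|J_L|$ for every $L$.

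The heart of the matter is then to show that the $J_L$ are pairwise disjoint (indeed they partition $\Irr(A)\times\Irr(B)$). I would prove the sharper statement that $(\alpha,\beta)\in J_L$ if and only if $L=L^*(\alpha,\beta)$, the largest normal subgroup $N$ of $G$ with $N\cap A\leq\krn\alpha$ and $N\cap B\leq\krn\beta$. Such a largest $N$ exists because, for $N_1,N_2\unlhd G$, the group $(N_1\cap A)(N_2\cap A)$ is the Hall $\pi$-subgroup of $N_1N_2$ lying inside $A$ and hence equals $(N_1N_2)\cap A$ (and likewise on the $B$-side), so the relevant family of normal subgroups is closed under products. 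That $(\alpha,\beta)\in J_{L^*}$ is a direct check: since $(\krn\alpha)L^*\cap A=\krn\alpha$ and $(\krn\alpha)L^*\cap B=L^*\cap B\leq\krn\beta$, any $N\unlhd G$ with $L^*\leq N\leq(\krn\alpha)L^*$ satisfies $N\cap A\leq\krn\alpha$ and $N\cap B\leq\krn\beta$, hence $N\leq L^*$ by maximality, so $\mathrm{core}_G((\krn\alpha)L^*)=L^*$; similarly with $\beta$.

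For the converse, suppose $(\alpha,\beta)\in J_L$; then $L$ belongs to the above family, so $L\leq L^*$. If $L\subsetneq L^*$, choose $N\unlhd G$ with $L\subsetneq N\leq L^*$ and $N/L$ a minimal normal subgroup of $G/L$; it is an elementary abelian $q$-group for some prime $q$. If $q\in\pi$, then $|N/L|$ is a $\pi$-number, so $N$ and $L$ have the same Hall $\pi'$-subgroup, $N\cap B=L\cap B$; since also $N\cap A\leq L^*\cap A\leq\krn\alpha$, we get $N=(N\cap A)(N\cap B)\subseteq(\krn\alpha)(L\cap B)=(\krn\alpha)L$, whence $N\leq\mathrm{core}_G((\krn\alpha)L)=L$, contradicting $N\supsetneq L$. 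If $q\in\pi'$, the same argument applies with $\beta$, $B$ in place of $\alpha$, $A$. Hence $L=L^*$ and the $J_L$ are disjoint, so
\[\ccl(G)=\sum_{L\unlhd G}|\mathcal{F}(G/L)|\ \leq\ \sum_{L\unlhd G}|J_L|\ =\ \Bigl|\bigsqcup_{L\unlhd G}J_L\Bigr|\ \leq\ |\Irr(A)\times\Irr(B)|=\ccl(A)\ccl(B),\]
so every $\mathcal{K}$-group is $\Conpi$.

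Finally I would verify conditions (ii) and (iii) in the definition of $\SConpi$. If $N\unlhd G$ has prime order $p\in\pi$ then $N\leq\cO_\pi(G)\leq A$, and $\ccl(G)-\ccl(G/N)=\sum_{L\unlhd G,\ N\not\leq L}|\mathcal{F}(G/L)|$. Whenever $N\not\leq L$ one has $J_L\subseteq\{(\alpha,\beta):N\not\leq\krn\alpha\}$, since $N\leq\krn\alpha$ together with $N\unlhd G$ would force $N\leq\mathrm{core}_G((\krn\alpha)L)=L$; combining with disjointness of the $J_L$,
\[\ccl(G)-\ccl(G/N)\ \leq\ \Bigl|\bigsqcup_{N\not\leq L}J_L\Bigr|\ \leq\ \bigl|\{(\alpha,\beta):N\not\leq\krn\alpha\}\bigr|\ =\ \bigl(\ccl(A)-\ccl(A/N)\bigr)\ccl(B),\]
which is condition (ii) (in particular for central $N$). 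Condition (iii) is the symmetric statement for $p\in\pi'$, obtained by interchanging the roles of $A$ and $B$ and of $\alpha$ and $\beta$; hence $G$ — and so every $\mathcal{K}$-group — is $\SConpi$. I expect the only genuine difficulty to be the minimal-normal-subgroup step of the third paragraph, that is, pinning down exactly which block $J_L$ a given pair $(\alpha,\beta)$ falls into; the rest is bookkeeping with Hall subgroups and with inflation of characters, and no induction on $|G|$ is needed.
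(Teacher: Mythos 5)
Your proof is correct and is essentially the paper's own argument: both decompose $\Irr(G)$ by kernels, apply the hypothesis to each quotient $G/L$ (using closure of $\mathcal{K}$ under quotients), and observe that the corresponding blocks of $\Irr(A)\times\Irr(B)$ are pairwise disjoint, with the $\SConpi$ conditions obtained by restricting the sum to kernels not containing the given normal subgroup. The only difference is that your sets $J_L$ and the verification $J_L=\{(\alpha,\beta):L^*(\alpha,\beta)=L\}$ spell out in detail the identification $|\Irr^{G/N}(G^*/N^*|1)|=|\Irr^G(G^*|N)|$ that the paper states without proof via the function $\Lev$.
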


The hypotheses of Theorem E give rise to the following question:

\paragraph{Question}Under Hypothesis \ref{solhyp}, is it possible for the number of faithful irreducible complex characters of $G$ to be greater than $|\Irr_G(A \times B)|$?

A negative answer to this question would imply that every finite soluble group is $\SConpi$ for every $\pi$.\\

Here is a summary of what has been shown about minimal soluble counterexamples to part (i) of the conjecture.

\begin{cor}
Let $G$ be a finite soluble group, such that $G$ is not $\Conpi$, but every proper section of $G$ is $\Conpi$.  Then:
 
(i) $G/(\cO_{\pi}(G)\cO_{\pi'}(G))$ does not have a normal Hall $\pi$-subgroup or a normal Hall $\pi'$-subgroup.
 
(ii) Every prime for which $G$ has a non-trivial cyclic Sylow $p$-subgroup divides the order of $G'$.
 
(iii) Let $N$ be an elementary abelian normal subgroup of $G$.  If $G$ has balanced action on $\Irr(N)$, then there exist $\nu,\nu' \in \Irr(N)$ such that $\nu$ does not extend to $G_{\nu}$ and $H=G_{\nu'}/\krn(\nu')$ fails to be $\SConpi$.
 
(iv)The centre of $G$ is contained in $G'$.

(v) $G$ has an image $K$ which is an irreducible linear group, such that $K$ fails to satisfy the conditions of Theorem E.
\end{cor}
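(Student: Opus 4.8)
The plan is to read each of (i)--(v) as a contrapositive of one of Theorems~A--E, using throughout that every proper section of a minimal counterexample $G$ -- in particular every subgroup of any proper quotient of $G$, and $G/N$ itself for any nontrivial normal subgroup $N$ -- is $\Conpi$. In each case I would assume the negation of the stated conclusion and check that the hypotheses of the relevant theorem hold, forcing $G$ to be $\Conpi$.

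For (i): if $G/(\cO_\pi(G)\cO_{\pi'}(G))$ had a normal Hall $\pi$-subgroup, Theorem~A would give that $G$ is $\Conpi^*$, hence $\Conpi$; and since $\Conpi$ coincides with $\mathrm{Con}_{\pi'}$ while $\cO_\pi(G)\cO_{\pi'}(G)=\cO_{\pi'}(G)\cO_\pi(G)$, applying Theorem~A with $\pi$ and $\pi'$ interchanged also rules out a normal Hall $\pi'$-subgroup of that quotient. For (ii): suppose some prime $p$ has a nontrivial cyclic Sylow $p$-subgroup $P$ while $p\nmid|G'|$. Then $P\cap G'=1$, so $G/G'$ decomposes as $(PG'/G')\times(Q/G')$ with $Q/G'$ its Hall $p'$-subgroup; taking $N:=Q$ gives $G/N\cong P$ cyclic of $p$-power order and $p\nmid|N|$ (as $|N|=|G'|\cdot|Q/G'|$ with $p\nmid|G'|$ and $|Q/G'|$ a $p'$-number), so $(|N|,|G/N|)=1$, and since every subgroup of $N$ is a proper section of $G$, Theorem~B forces $G$ to be $\Conpi$. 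For (iii) (with $N\neq 1$): the condition that $G_\nu/N$ be $\Conpi$ for every $\nu\in\Irr(N)$ is automatic, each such group being a proper section of $G$; so in view of the assumed balanced action, if also every $\nu$ extended to $G_\nu$, or every $G_\nu/\krn(\nu)$ were $\SConpi$, Theorem~C would force $G$ to be $\Conpi$. Hence both alternatives of Theorem~C fail, which is exactly assertion (iii).

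Part (v) is likewise a short deduction. Let $\mathcal{K}$ be the class of all homomorphic images of $G$; it is closed under quotients. If every $\mathcal{K}$-group satisfied the character-counting hypothesis of Theorem~E, then every $\mathcal{K}$-group -- in particular $G=G/1$ itself -- would be $\SConpi$, hence $\Conpi$. So some image $K$ of $G$ fails that hypothesis; but a group with no faithful irreducible character satisfies it trivially (its left-hand side being $0$), so $K$ admits a faithful irreducible character and is therefore an irreducible linear group.

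The delicate part is (iv). Theorem~D gives at once that $G$ has no central subgroup $M$ of prime order with $M\cap G'=1$: otherwise every subgroup of $G/M$ is a proper section, hence $\Conpi$, and Theorem~D would force $G$ to be $\Conpi$. Equivalently, every element of prime order in $Z(G)$ lies in $G'$. Upgrading this to $Z(G)\le G'$ is where I expect the real work: one must exclude the possibility that $Z(G)\not\le G'$ while every central element of prime order lies in $G'$, so that there is a $p$-element $z\in Z(G)$ with $z^p\in G'$ but $z\notin G'$. I would try to dispose of this by passing to $\bar G:=G/\langle z^p\rangle$, in which $\bar z$ is central of order $p$ and still outside $\bar G'$, and then attempt to transfer $\Conpi$ back up to $G$; failing a direct lifting result, the fallback would be to exploit part (ii) (note that $p\mid|G'|$ here) or Theorem~B with a cyclic quotient of $G/G'$ whose order is coprime to $|G'|$. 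Making this last reduction go through cleanly is the main obstacle I anticipate.
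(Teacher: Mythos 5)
Your derivations of (i), (ii), (iii) and (v) are correct and are exactly the intended route: the paper gives no separate proof of this Corollary (it is offered as a ``summary''), and each part is meant to be read as the contrapositive of the corresponding Theorem A, B, C or E applied to a minimal counterexample, just as you do. Two small checks you handled well: in (ii) the quotient $G/N$ you construct is a $p$-group, so it is a $\pi$- or $\pi'$-group and the version of Theorem B proved in the body (which assumes $G/N$ is a $\pi$-group) applies after possibly swapping $\pi$ and $\pi'$; and in (v) your observation that a quotient with no faithful irreducible character satisfies the hypothesis of Theorem E vacuously is the right way to see that the failing image $K$ is an irreducible linear group.

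The gap you flag in (iv) is genuine, and it is a gap in the Corollary as stated rather than in your reading of it. Theorem D yields only that $G$ has no central subgroup $M$ of \emph{prime order} with $M \cap G' = 1$, i.e.\ that every minimal subgroup of $Z(G)$ lies in $G'$. This is strictly weaker than $Z(G) \leq G'$: one must still exclude a central element $z$ of order $p^k$ with $k \geq 2$ such that $z \notin G'$ but the subgroup of order $p$ in $\langle z \rangle$ lies in $G'$ (abstractly this configuration occurs, e.g.\ in the central product of $C_4$ with $Q_8$, where $Z(G) \cong C_4$, $G' \cong C_2$ and $\Omega_1(Z(G)) = G' $). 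None of your proposed repairs closes this: passing to $G/\langle z^p\rangle$ only produces a proper quotient, which is $\Conpi$ by minimality and so gives no contradiction; part (ii) concerns cyclic Sylow subgroups, not central elements; and Theorem B needs a cyclic quotient of order coprime to the kernel, which the $p$-part of $z$ obstructs when $p$ divides $|G'|$. So either (iv) should be weakened to ``every central subgroup of prime order is contained in $G'$'' (equivalently, the socle of $Z(G)$ lies in $G'$), or an additional argument not supplied by Theorems A--E is required. You were right not to paper over this.
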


As a demonstration of the applicability of the results in this paper, and as empirical evidence for the conjecture, we prove the following:

\begin{thmf}
Let $\pi$ be a set of primes, and let $G$ be a finite soluble group which is a minimal counterexample to the assertion `every finite soluble group is $\Conpi^*$'.  Then $|G|>2000$.
\end{thmf}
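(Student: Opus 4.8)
The plan is to argue by contradiction. Suppose the assertion fails, and choose $(G,\pi)$ with $G$ soluble, $|G|\le 2000$, $G$ not $\Conpi^*$, and $|G|$ as small as possible. Then every soluble group of order less than $|G|$ is $\mathrm{Con}_\tau^*$ for every set of primes $\tau$, so every proper section of $G$ is both $\Conpi$ and $\Conpi^*$. If $B$ (or $A$) were trivial then $G\cong A\times B$ and $G$ would be $\Conpi^*$, so $A\ne 1\ne B$; also $G$ is not nilpotent, since nilpotent groups are $\Conpi^*$. As the property $\Conpi^*$ is unchanged when $\pi$ and $\pi'$ are interchanged, we may assume $|A|\le|B|$, whence $|A|^2\le|A||B|=|G|\le 2000$ and so $|A|\le 44$.

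First I would fix the Fitting structure. Since $\cO_\pi(G)$ and $\cO_{\pi'}(G)$ contain, respectively, the Hall $\pi$- and Hall $\pi'$-subgroups of the nilpotent group $F(G)$ (each being characteristic in $F(G)$), we have $F(G)\le\cO_\pi(G)\cO_{\pi'}(G)$; hence if $G$ had Fitting height at most $2$, then $G/(\cO_\pi(G)\cO_{\pi'}(G))$, as a quotient of the nilpotent group $G/F(G)$, would have a characteristic Hall $\pi$-subgroup, and Theorem A would give that $G$ is $\Conpi^*$. So $G$ has Fitting height at least $3$. Next I would assemble the remaining restrictions, each obtained by negating ``$G$ is $\Conpi^*$'' against one of the earlier theorems: by Theorem B, $G$ has no proper normal subgroup $N$ with $G/N$ cyclic and $(|N|,|G/N|)=1$ (such an $N$ would be a $\Conpi^*$ section all of whose subgroups are $\Conpi$, forcing $G$ to be $\Conpi^*$), which constrains $G/G'$; by Theorem D, $G$ has no central subgroup of prime order $M$ with $M\cap G'=1$; by Theorem A, $G/(\cO_\pi(G)\cO_{\pi'}(G))$ has no normal Hall $\pi$- or $\pi'$-subgroup; and Theorems C and E, with Proposition \ref{easysgp}, yield the further information collected in the Corollary --- in particular, every prime with non-trivial cyclic Sylow subgroup divides $|G'|$, every cyclic minimal normal subgroup produces characters witnessing failure of extendibility and of $\SConpi$, and $G$ has an irreducible linear image violating the hypothesis of Theorem E. (The Corollary applies verbatim when $G$ fails $\Conpi$; in the borderline case $\ccl(G)=\ccl(A)\ccl(B)$ with $G\not\cong A\times B$ one re-derives the needed consequences directly, using that Theorems A--D each conclude $\Conpi^*$, not merely $\Conpi$.)

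With all this in hand the argument reduces to a finite inspection. The conjunction ``Fitting height $\ge 3$, $G/(\cO_\pi(G)\cO_{\pi'}(G))$ with no normal Hall $\pi$- or $\pi'$-subgroup, $|A|\le 44$, no cyclic coprime quotient, the Sylow/derived-subgroup condition above, and an irreducible linear image bad for Theorem E'' cuts the soluble groups of order at most $2000$ down to a short list of candidates; the smallest groups of Fitting height $3$, such as $\Sym$ and $\mathrm{GL}(2,3)$, are excluded at once because they possess a normal Hall subgroup in the relevant quotient or a cyclic coprime quotient. For each surviving candidate, and each partition of its prime divisors into $\pi$ and $\pi'$, I would confirm $\Conpi^*$ directly: Proposition \ref{easysgp} reduces this to comparing $\ccl(S_x)$ with $\ccl(A)$ as $x$ runs over class representatives of the $\pi$-elements (strictly whenever $S_x\not\cong B$), while Theorems B, C and D remove any candidate possessing a suitable cyclic coprime quotient, a complemented elementary abelian normal subgroup, or a central subgroup of prime order. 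A computer enumeration of the soluble groups of order at most $2000$ may be used to confirm that no case has been overlooked.

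The step I expect to be the main obstacle is the case analysis itself. Each structural theorem carries a side condition --- balanced action, coprimality, cyclicity of the relevant quotient, $M\cap G'=1$ --- that a putative counterexample could try to evade, so the real content is showing that these constraints genuinely interlock and force $|G|>2000$, rather than merely restricting the shape of $G$. The most delicate point is the equality case $\ccl(G)=\ccl(A)\ccl(B)$ with $G\not\cong A\times B$, where the Corollary does not apply literally; there one must run the parallel argument through the $\Conpi^*$ clauses of Theorems A--D together with an analysis of the centralisers of $\pi$-elements (via Theorem \ref{nsgpconpi} and Gallagher's extendibility criterion) to exclude such a $G$ of order at most $2000$.
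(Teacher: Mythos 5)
Your setup is sound and matches the paper's strategy in outline: take a minimal counterexample, feed it through Theorems A--E and Proposition \ref{easysgp}, and reduce to a finite check. But there is a genuine gap: the entire quantitative content of the proof --- the elimination of all but seven orders below $2000$ --- is asserted rather than carried out. You write that the accumulated constraints ``cut the soluble groups of order at most $2000$ down to a short list of candidates,'' but you never derive that list, and your fallback (``a computer enumeration\ldots to confirm that no case has been overlooked'') is essentially the brute-force verification of all $\sim$50{,}000 groups that the theorem is designed to avoid. The paper's proof is an arithmetic argument on $n=|G|$, and the specific lemmas that do the work are absent from your proposal: (a) if a Hall subgroup $A$ is abelian or of order $p^3$ then $G$ is $\Conpi^*$ (via Proposition \ref{easysgp}, since every proper subgroup of such an $A$ has strictly fewer classes) --- this is what forces $p^4\mid n$ and, when $n$ has at most three prime divisors, lets one take $\pi$ a singleton; (b) a cyclic Sylow $p$-subgroup cannot be normal (Theorem C, case (i), applied to a normal Sylow subgroup of prime order), so it induces a coprime automorphism of $F(G)$ of order $p$, and since $F(G)$ contains its own centraliser and has order dividing $2^6\cdot3^4\cdot5^3\cdot7^2$, the automorphism-order lemma for nilpotent groups confines the prime divisors of $n$ to $\{2,3,5,7,13,31\}$ and then to $\{2,3,5,7\}$; (c) the resulting bounds $e(2)=7$, $e(3)=5$, $e(5)=4$, $e(7)=3$, $e(p)=2$ on prime-power divisors, combined with $n\le 2000$, pin down exactly the orders $336,672,1008,1200,1296,1344,1680$. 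Without some replacement for these steps, ``a short list of candidates'' has no justification, and your proof reduces to the unstructured computation.

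Two smaller remarks. Your bound $|A|\le 44$ is correct but too weak to drive the argument; the paper's leverage comes from the much stronger fact that the \emph{smaller} Hall subgroup must be non-abelian of order at least $p^4$ (or a non-nilpotent group of composite order), which is what makes $2000$ so restrictive. Your worry about the equality case $\ccl(G)=\ccl(A)\ccl(B)$ with $G\not\cong A\times B$ is legitimate but already handled: Theorems A--D and Proposition \ref{easysgp} each conclude $\Conpi^*$ outright under their hypotheses, so the elimination lemmas can be (and in the paper are) stated directly for $\Conpi^*$ rather than routed through the Corollary.
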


This could proved by direct calculation, as the isomorphism groups of order up to $2000$ (with the exception of groups of order $1024$, which are obviously $\Conpi^*$) have been explicitly enumerated by Besche, Eick and O'Brien (\cite{7}).  Such a database is available in the \emph{MAGMA Computational Algebra System} and its Small Groups Library (\cite{8}), and therefore Theorem F could be verified using this system alone.  But in fact, all groups of order up to $2000$ can be ruled out by hand using the previous theorems, with the exception of the following group orders:
\[ 336, 672, 1008, 1200, 1296, 1344, 1680 \]
There are fewer than $20000$ groups of these orders, which can therefore be checked directly and relatively quickly by use of \cite{8}.  No counterexamples to $\Conpi^*$ were found.\\

Finally, a remark about $\pi$-special characters.  Although none of the new results in this paper are obtained using $\pi$-special characters, they appear to be closely related to the problem at hand, and so deserve mention.  These characters are discussed in detail in Chapter VI of \cite{9}, with all the necessary references.

\paragraph{Definitions}Let $G$ be a finite group, with $\chi \in \Irr(G)$, and $\pi$ a set of primes.  Then $\chi$ is \emph{$\pi$-special} if:

(i) $\chi(1)$ is a $\pi$-number;

(ii) Given any subnormal subgroup $S$ of $G$, and $\theta \in \Irr(S)$ such that $[\chi_s,\theta] \not= 0$, the determinantal order of $\theta$ is a $\pi$-number.\\

We say $\chi \in \Irr(G)$ is \emph{$\pi$-factorable} if it can be expressed as the product of a $\pi$-special character and a $\pi'$-special character. 

\begin{thm}Let $G$ be a finite $\pi$-separable group, and let $\chi \in \Irr(G)$.  Let $A$ be a Hall $\pi$-subgroup of $G$.

(i) If $\chi$ is $\pi$-factorable, its factorisation is unique.
 
(ii) If $\chi$ is primitive, it is $\pi$-factorable.
 
(iii) Restriction of characters defines an injective map from the $\pi$-special characters of $G$ to $\Irr(A)$.
\end{thm}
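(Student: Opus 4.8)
The plan is to prove all three parts together — they are classical results of Gajendragadkar — by a single induction on $|G|$, peeling off a minimal normal subgroup $N$ of $G$ at each step. First I would record a short list of lemmas, each provable by the same kind of induction: (a) if $\chi$ is $\pi$-special and $M \unlhd G$ then every irreducible constituent of $\chi_M$ is $\pi$-special, since every subnormal subgroup of $M$ is subnormal in $G$; (b) every irreducible constituent of a product of two $\pi$-special characters of $G$ is again $\pi$-special (and the complex conjugate of a $\pi$-special character is $\pi$-special); (c) a character that is both $\pi$-special and $\pi'$-special is trivial, because its degree, and then its determinantal order, must be simultaneously a $\pi$-number and a $\pi'$-number; and (d) if $M \unlhd G$ is an abelian $\pi'$-group then $M \leq \krn\chi$ for every $\pi$-special $\chi$, since by (a) the linear constituents of $\chi_M$ have determinantal order a $\pi$-number dividing $|M|$. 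Because $G$ is $\pi$-separable, a minimal normal subgroup $N$ is elementary abelian in the soluble case (a non-abelian minimal normal subgroup, which is then a $\pi$-group or a $\pi'$-group, needs a parallel but separate treatment), and the induction divides according to whether $N$ is a $\pi$-group or a $\pi'$-group.

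Part (i) is then quick. Given $\chi = \alpha\beta = \alpha'\beta'$ with $\alpha,\alpha'$ $\pi$-special and $\beta,\beta'$ $\pi'$-special, compute $\langle \alpha\overline{\alpha'},\, \beta'\overline{\beta}\rangle = \langle \alpha\beta,\, \alpha'\beta'\rangle = \langle\chi,\chi\rangle = 1$. By (b) every irreducible constituent of $\alpha\overline{\alpha'}$ is $\pi$-special and every irreducible constituent of $\beta'\overline{\beta}$ is $\pi'$-special, so the two products share an irreducible constituent that is both, which by (c) is $1_G$. Hence $\langle\alpha,\alpha'\rangle \geq 1$, forcing $\alpha = \alpha'$ and then $\beta = \beta'$.

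For part (ii), given primitive $\chi$, the restriction of $\chi$ to any normal subgroup is homogeneous, so taking $N$ minimal normal we get $\chi_N = e\lambda$ with $\lambda$ a $G$-invariant linear character; since $\lambda$ is invariant, $\krn\lambda \unlhd G$, and minimality of $N$ forces $\krn\lambda \in \{1, N\}$, so either $\lambda$ is trivial or $N$ is cyclic of prime order $p$ and central in $G$. In the first case $N \leq \krn\chi$ and we induct on $G/N$; in the second, the $\pi$-special (if $p \in \pi$) or $\pi'$-special (if $p \in \pi'$) factor of $\chi$ is built from a suitable special character agreeing with $\lambda$ on $N$, the construction of which uses coprime-extension arguments available in the $\pi$-separable setting, after which one peels it off and applies induction to a smaller group; uniqueness of the factorisation is part (i).

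For part (iii) I would prove by a single induction on $|G|$ the conjunction of two statements — that $\chi_A \in \Irr(A)$ for every $\pi$-special $\chi$, and that $\chi \mapsto \chi_A$ is injective on $\pi$-special characters — since the first makes the second tractable. Take $N$ minimal normal: if $N$ is a $\pi'$-group then $N \leq \krn\chi$ by (d) and, choosing $A$ with $N \leq A$ so that $A \cong AN/N$, both statements for $G$ follow from those for $G/N$; if $N$ is a $\pi$-group, arrange $N \leq A$, pick $\theta \in \Irr(N)$ under $\chi$, put $T = G_\theta$, and let $\psi \in \Irr(T|\theta)$ be the Clifford correspondent of $\chi$. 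When $T < G$ one wants $\psi$ to be $\pi$-special in $T$, $A$ to be choosable with $A \cap T$ a Hall $\pi$-subgroup of $T$, and restriction to the Hall subgroup to interact predictably with the Clifford correspondence, so that the inductive statements for $T$ transfer to $G$; when $T = G$, one extends the invariant $\theta$ to a $\pi$-special character $\widehat\theta$ of $G$, writes $\chi = \widehat\theta\,\eta$ with $\eta$ $\pi$-special and trivial on $N$, and applies the induction to $G/N$. I expect this inertia-group reduction to be the genuine obstacle — in particular, keeping a Hall $\pi$-subgroup correctly aligned with $T$ while passing between $\chi$ and $\psi$, since restriction to a Hall subgroup does not commute with induction in general — and it is here that $\pi$-separability (through Hall's theorems for $\pi$-separable groups) has to be used essentially rather than merely formally.
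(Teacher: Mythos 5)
The paper does not prove this theorem at all: it is quoted as a known result of Gajendragadkar and Isaacs, with the reader referred to Chapter VI of \cite{9} for proofs and references. So there is no in-paper argument to compare against; what you have written is a reconstruction of the classical proofs, and in outline it is the correct one (the inner-product computation for (i), quasi-primitivity plus a minimal normal subgroup for (ii), and the Clifford/inertia-group induction for (iii) are all exactly the standard route).

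As a proof, however, the proposal has genuine gaps, and they sit precisely at the load-bearing points. Your lemma (b) --- that every irreducible constituent of a product of two $\pi$-special characters is again $\pi$-special --- is not a routine induction; it is itself one of the harder theorems of the subject, and part (i) is only ``quick'' once it is granted. In part (ii) the case where $\lambda$ is faithful, so that $N$ is central of prime order, is the crux: producing a special character of $G$ lying over $\lambda$ and peeling it off is where the real work (character triples, coprime extension) happens, and ``uses coprime-extension arguments'' is a placeholder, not an argument; moreover in the general $\pi$-separable (non-soluble) case a minimal normal subgroup may be a non-abelian $\pi$- or $\pi'$-group, and the ``parallel but separate treatment'' is not supplied. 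In part (iii) you explicitly flag the inertia-group reduction as unresolved. The missing ingredient there is that $\chi(1)$ is a $\pi$-number, so $|G:T|$ is a $\pi$-number; one may then choose $A$ with $A\cap T = A_\theta$ a Hall $\pi$-subgroup of $T$, whence $G=TA$, Mackey gives $(\psi^G)_A = (\psi_{A\cap T})^A$ with a single double coset, and the Clifford correspondence inside $A$ over $\theta$ converts irreducibility and injectivity for $T$ into the same for $G$. Also note a small slip in your $\pi'$-case of (iii): one cannot choose $A$ with $N\le A$ when $N$ is a $\pi'$-group; the correct statement is $N\cap A=1$, so $A\cong AN/N$ is a Hall $\pi$-subgroup of $G/N$. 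None of this makes the strategy wrong --- it is the strategy of \cite{9} --- but as written the proposal is a road map rather than a proof, and the steps it omits are the ones that require the most care.
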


An obvious consequence of this is that we have a canonical way of sending the $\pi$-factorable characters of $G$ to $\Irr(A \times B)$, where $B$ is a Hall $\pi'$-subgroup of $G$, such that the map so defined is injective.  In particular:

\begin{cor}Let $G$ be a $\pi$-separable group, with Hall $\pi$-subgroup $A$ and Hall $\pi'$-subgroup $B$.  Then the number of $\pi$-factorable characters of $G$ is at most $\ccl(A)\ccl(B)$.\end{cor}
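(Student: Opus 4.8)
The plan is to deduce the corollary directly from the preceding theorem together with the standard description of the irreducible characters of a direct product. Recall first that $\Irr(A \times B)$ is in natural bijection with $\Irr(A) \times \Irr(B)$, the character $\alpha \times \beta$ corresponding to the pair $(\alpha,\beta)$; in particular $|\Irr(A \times B)| = \ccl(A)\ccl(B)$. So it suffices to exhibit an injective map from the set of $\pi$-factorable characters of $G$ into $\Irr(A) \times \Irr(B)$.

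Next I would build this map. Let $\chi \in \Irr(G)$ be $\pi$-factorable, so $\chi = \alpha\beta$ with $\alpha$ a $\pi$-special character of $G$ and $\beta$ a $\pi'$-special character of $G$. By part (i) of the theorem, the pair $(\alpha,\beta)$ is uniquely determined by $\chi$, so $\chi \mapsto (\alpha,\beta)$ is a well-defined injection from the $\pi$-factorable characters of $G$ into $\{\pi\text{-special characters}\} \times \{\pi'\text{-special characters}\}$. Composing with restriction in each coordinate, define $\Phi(\chi) = (\alpha_A, \beta_B) \in \Irr(A) \times \Irr(B)$.

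It then remains to argue that $\Phi$ is injective. By part (iii) of the theorem, restriction $\alpha \mapsto \alpha_A$ is injective on the $\pi$-special characters of $G$; applying the same theorem with $\pi'$ in place of $\pi$ (legitimate, since $G$ is $\pi$-separable if and only if it is $\pi'$-separable, and $B$ is a Hall $\pi'$-subgroup), restriction $\beta \mapsto \beta_B$ is injective on the $\pi'$-special characters of $G$. Hence the coordinatewise map is injective on $\{\pi\text{-special}\} \times \{\pi'\text{-special}\}$, and, combined with the injectivity of $\chi \mapsto (\alpha,\beta)$, this shows $\Phi$ is injective. Therefore the number of $\pi$-factorable characters of $G$ is at most $|\Irr(A) \times \Irr(B)| = \ccl(A)\ccl(B)$.

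As for obstacles, there is essentially none: all the substance is carried by the theorem on $\pi$-special characters, which we take as given. The only point requiring any care is that $\Phi(\chi)$ is well-defined, and this is precisely where uniqueness of the $\pi$-factorisation (part (i)) is used — without it the restriction maps alone would not obviously yield an injection from the $\pi$-factorable characters, since a priori a single $\chi$ might admit several factorisations producing different pairs $(\alpha_A,\beta_B)$.
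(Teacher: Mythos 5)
Your argument is correct and is exactly the paper's intended route: the paper derives the corollary by composing the unique $\pi$-factorisation $\chi = \alpha\beta$ with the injective restriction maps of part (iii) (applied to both $\pi$ and $\pi'$) to obtain an injection from the $\pi$-factorable characters into $\Irr(A \times B)$. Your write-up merely makes explicit the identification $\Irr(A\times B) \cong \Irr(A)\times\Irr(B)$ and the (trivial) injectivity of $\chi \mapsto (\alpha,\beta)$, which the paper leaves implicit.
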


If a method could be found to estimate the number of non-$\pi$-factorable characters of $G$ (which are necessarily imprimitive) by similar means, this could answer the conjecture for $\pi$-separable groups, or at least provide an inequality resembling that of the conjecture.  However, no such method is known, even under the assumption that $G$ is soluble.

\section{Centralisers}

In this section, we obtain Theorem A more or less directly from Hall's theorem together with some elementary centraliser formulae for $\ccl(G)$.  The most basic of these is the following:
\begin{equation}
\label{basic}
\ccl(G) = \frac{1}{|G|} \sum_{g \in G} |G_g|
\end{equation}

Given subsets $H$ and $K$ of $G$, we use $(H,K)$ to denote the set of ordered pairs of elements $(h,k)$ such that $h \in H$ and $k \in K$, and $hk = kh$.  If $H$ and $K$ are subgroups of $G$, we define a function to indicate how often elements of $H$ and $K$ commute, taking values between 0 and 1:
\[ \pcl(H,K) := \frac{1}{|H||K|}\sum_{h \in H} |K_h| \]

Note that $\pcl(H,K) = \pcl(K,H) = |(H,K)|/|H||K|$ and $\pcl(G,G) = \ccl(G)/|G|$.

\begin{lem} Let $G$ be a finite group, let $H$ and $L$ be subgroups of $G$, and let $K < L$.  Then
\[ \frac{\pcl(H,K)}{|L:K|} < \pcl(H,L) \leq \pcl(H,K) \]
For the second inequality, equality occurs if and only if $L = \bigcap_{h \in H}(L_h K)$.
\end{lem}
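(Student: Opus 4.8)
The plan is to reduce everything to the defining sums $|H||K|\,\pcl(H,K) = \sum_{h\in H}|K_h|$ and $|H||L|\,\pcl(H,L) = \sum_{h\in H}|L_h|$, and to exploit that $K \le L$ forces $K_h = K \cap L_h$ for every $h$. The single computational input is the product formula: since the subset $L_h K$ of $L$ has size $|L_h K| = |L_h||K|/|L_h \cap K| = |L_h||K|/|K_h|$, one gets $|L| \ge |L_h||K|/|K_h|$, that is,
\[ \frac{|K_h|}{|K|} \;\ge\; \frac{|L_h|}{|L|}, \]
with equality precisely when $L_h K = L$.

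For the upper bound $\pcl(H,L) \le \pcl(H,K)$, I would simply sum this inequality over $h \in H$ and divide by $|H|$, obtaining $\frac{1}{|H||L|}\sum_h |L_h| \le \frac{1}{|H||K|}\sum_h |K_h|$, which is the claim. Equality in the summed inequality is equivalent to equality in every term, hence to $L_h K = L$ for all $h \in H$; and because each $L_h K$ is contained in $L$, this is the same as the assertion $\bigcap_{h\in H}(L_h K) = L$ (the intersection of subsets of $L$ equals $L$ iff each subset equals $L$), which is the stated equality condition.

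For the strict lower bound, note that $K_h = K \cap L_h \le L_h$ gives $|K_h| \le |L_h|$ for every $h$, and rewrite $\pcl(H,K)/|L:K|$ as $\tfrac{1}{|H||L|}\sum_h |K_h|$ using $|L:K| = |L|/|K|$, so that
\[ \pcl(H,L) - \frac{\pcl(H,K)}{|L:K|} \;=\; \frac{1}{|H||L|}\sum_{h\in H}\bigl(|L_h| - |K_h|\bigr). \]
Every summand is nonnegative, and the summand indexed by $h = 1$ equals $|L| - |K| > 0$ since $K < L$ is proper and $1 \in H$; hence the difference is strictly positive.

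There is no substantial obstacle. The only points needing care are the passage from the termwise condition $L_h K = L$ to the intersection formula (valid only because each factor already lies in $L$) and the use of $1 \in H$ to force strictness in the first inequality.
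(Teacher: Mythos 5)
Your proof is correct and follows essentially the same route as the paper: the upper bound comes from the termwise comparison $|L_h|/|L| \le |K_h|/|K|$ (which the paper derives via orbit--stabiliser from $|h^K| \le |h^L|$ and you derive via $|L_hK| = |L_h||K|/|K_h| \le |L|$ --- the same fact), and the strict lower bound comes from the $h=1$ term, which is the paper's observation that $(H,L)$ strictly contains $(H,K)$ because of the pairs $(1,l)$ with $l \in L \setminus K$.
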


\begin{proof}Clearly $(H,K) \subseteq (H,L)$, and in fact the containment is strict, as $(H,L)$ also contains $(1,l)$ for any $l \in L \setminus K$.  This gives the first inequality.

For any element of $H$, we have $|h^K| \leq |h^L|$, with equality if and only if $L = L_h K$.  By the Orbit-Stabiliser Theorem, this means $|L_h : K_h| \leq |L:K|$ with the same condition for equality.  This gives the second inequality and the stated condition for equality.
\end{proof}

\begin{cor}\label{sgpccl} Let $G$ be a finite group with $K < G$.  Then
\[ \frac{\ccl(K)}{|G:K|} < \ccl(G) \leq \ccl(K)|G:K| \]
\end{cor}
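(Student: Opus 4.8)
The plan is to read both inequalities off the preceding Lemma, using two of its specialisations together with the trivial inclusion $K_g \leq G_g$ (hence $|K_g| \leq |G_g|$) for every $g \in G$. The point to keep in mind is that, since $\pcl$ is symmetric and $\pcl(H,L) \leq \pcl(H,K)$ whenever $K \leq L$, the quantity $\pcl$ decreases (weakly) when either entry of the pair is enlarged, while enlarging a \emph{proper} subgroup produces the strict lower bound controlled by the index.

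For the upper bound I would apply this monotonicity twice, once with first entry $G$ and once with first entry $K$, taking the chain $K < L$ in the Lemma to be $K < G$ both times:
\[ \frac{\ccl(G)}{|G|} = \pcl(G,G) \leq \pcl(G,K) = \pcl(K,G) \leq \pcl(K,K) = \frac{\ccl(K)}{|K|}. \]
Multiplying through by $|G|$ gives $\ccl(G) \leq |G:K|\,\ccl(K)$.

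For the lower bound I would instead invoke the \emph{first}, strict inequality of the Lemma, in the specialisation with first entry $K$ and with $K < L$ taken to be $K < G$. This reads $\pcl(K,K)/|G:K| < \pcl(K,G)$, i.e. $\ccl(K)/|G| < \pcl(K,G)$. On the other hand, using formula \eqref{basic} and $|K_g| \leq |G_g|$,
\[ \pcl(K,G) = \frac{1}{|K||G|}\sum_{g \in G}|K_g| \leq \frac{1}{|K||G|}\sum_{g \in G}|G_g| = \frac{\ccl(G)}{|K|}. \]
Comparing the two displays yields $\ccl(K)/|G| < \ccl(G)/|K|$, that is, $\ccl(K)/|G:K| < \ccl(G)$.

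I do not expect a genuine obstacle here: the argument is essentially bookkeeping about which entries of $\pcl$ to vary. The only delicate point is preserving strictness in the lower bound, and this is automatic, since the relevant inequality of the Lemma is strict — the set $(K,G)$ properly contains $(K,K)$, for instance it contains $(1,g)$ for any $g \in G \setminus K$, which $(K,K)$ does not.
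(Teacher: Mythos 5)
Your proof is correct and is essentially the intended derivation: the paper states this as an immediate corollary of the preceding lemma, obtained by specialising its entries to the pairs $(G,G)$, $(K,G)$ and $(K,K)$ exactly as you do, with the strict lower bound coming from the lemma's first (strict) inequality together with the trivial bound $\pcl(K,G)\leq \ccl(G)/|K|$ from $|K_g|\leq|G_g|$. No gaps.
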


Every element of a finite group $G$ can be written in a unique way as $xy$ where $x$ is a $\pi$-element, $y$ is a $\pi'$-element and $xy=yx$.  Thus we can identify $G$ with the set of pairs $(x,y)$ of elements of $G$ satisfying these conditions, and consider the conjugation action of $G$ on such pairs.  We obtain the following formula by considering for each $\pi$-element $x$ the orbits occurring in $(x,Q)$, where $Q$ is the set of $\pi'$-elements commuting with $x$:
\begin{equation}
\label{pp1}
 \ccl(G) = \sum_{r \in R} \ccl_{\pi'}(G_r)
\end{equation}
where $R$ is a set of representatives of the conjugacy classes of $\pi$-elements of $G$.\\

Now specialise to the case of finite soluble groups.
  
\paragraph{Definition}Let $G$ be a finite soluble group, and let $\Sigma$ be a Sylow system for $G$.  Let $H$ be a subgroup of $G$.  We say $\Sigma$ \emph{reduces into} $H$ (written $\Sigma \searrow H$) if $S \cap H$ is a Hall $\pi$-subgroup of $H$ for every $\pi \subseteq \bP$, where $S$ is the Hall $\pi$-subgroup of $G$ occurring in $\Sigma$.\\

The following is easily obtained from Hall's theorem and the definition of a Sylow system:

\begin{lem}\label{xilem}Let $\Sigma$ be a Sylow system for the finite soluble group $G$.

(i) Let $g \in G$ such that $\Sigma \searrow G_g$.  Then $g$ is contained in every $S \in \Sigma$ for which $|g| \mid |S|$.

(ii) Let $H$ be any subgroup of $G$.  Then there exists $g \in G$ such that $\Sigma \searrow H^g$.

(iii) Suppose $\Sigma \searrow H$.  Then $\Sigma \cap H$ is a Sylow system of $H$.  For any subgroup $L$ of $H$, this Sylow system of $H$ reduces into $L$ if and only if $\Sigma$ reduces into $L$.\end{lem}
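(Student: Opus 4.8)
The plan is to obtain all three parts directly from Hall's theorem, using only the description of a Sylow system as a lattice of pairwise permutable Hall subgroups containing exactly one member of $\Hpi(G)$ for each $\pi$. Part (i) is the quickest. Let $\pi$ be the set of primes with $S \in \Hpi(G)$; then $|g| \mid |S|$ forces $|g|$ to be a $\pi$-number, so $\angg$ is a $\pi$-subgroup of $G_g$, and since $g$ centralises $G_g$ this $\angg$ is normal in $G_g$, hence lies in $\cO_\pi(G_g)$ and so in every Hall $\pi$-subgroup of $G_g$. By hypothesis $S \cap G_g$ is one such, giving $g \in S \cap G_g \leq S$.

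For (ii) I would first build a Sylow system of $G$ that reduces into $H$, and then transport it onto $\Sigma$. For each prime $p$ dividing $|H|$, choose a Hall $\{p\}'$-subgroup of $H$; it is a $\{p\}'$-subgroup of $G$, so by Hall's theorem it lies in a Hall $\{p\}'$-subgroup $Q_p$ of $G$, and an order count inside $H$ shows $Q_p \cap H$ is precisely the chosen subgroup. For each prime $p$ dividing $|G|$ but not $|H|$, instead choose $Q_p$ to be a Hall $\{p\}'$-subgroup of $G$ containing $H$ (possible, as $H$ is then a $\{p\}'$-subgroup), so that $Q_p \cap H = H$. The $Q_p$ form a complement basis of $G$ and hence generate a Sylow system $\Sigma'$; each member of $\Sigma'$ is an intersection of certain of the $Q_p$, so it meets $H$ in the corresponding intersection of the $Q_p \cap H$, a Hall subgroup of $H$ of the appropriate type — that is, $\Sigma' \searrow H$. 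Since $G$ is transitive on Sylow systems, $\Sigma = {\Sigma'}^{g}$ for some $g \in G$; writing $S' \in \Sigma'$ and $S = {S'}^g \in \Sigma$ for the Hall $\pi$-subgroups, $S \cap H^g = {S'}^g \cap H^g = (S' \cap H)^g \in \Hpi(H^g)$ for every $\pi$, so $\Sigma \searrow H^g$.

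For (iii), assume $\Sigma \searrow H$ and write $S_\pi$ for the member of $\Hpi(G)$ in $\Sigma$. Applying the hypothesis to $\pi$ equal to the set $\tau$ of primes dividing $|H|$ gives $S_\tau \cap H = H$, i.e.\ $H \leq S_\tau$; combined with the lattice identity $S_\pi \cap S_\rho = S_{\pi \cap \rho}$, this shows $S_\pi \cap H$ depends only on $\pi \cap \tau$, so $\Sigma \cap H$ contains exactly one member of $\Hpi(H)$ for each $\pi$, namely $S_\pi \cap H$. For pairwise permutability, fix $\pi,\rho$: since $(S_\pi \cap H) \cap (S_\rho \cap H) = S_{\pi \cap \rho} \cap H$, a count of orders gives $|(S_\pi \cap H)(S_\rho \cap H)| = |S_{\pi \cup \rho} \cap H|$, and as the left-hand set lies inside the group on the right the two are equal, so $S_\pi \cap H$ and $S_\rho \cap H$ permute, with product $S_{\pi \cup \rho} \cap H$. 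Hence $\Sigma \cap H$ is a lattice of pairwise permutable Hall subgroups of $H$ meeting each $\Hpi(H)$ exactly once — a Sylow system of $H$. The last claim is then immediate: for $L \leq H$, the statement ``$\Sigma \cap H$ reduces into $L$'' reads $(S_\pi \cap H) \cap L = S_\pi \cap L \in \Hpi(L)$ for all $\pi$, which is verbatim the statement ``$\Sigma$ reduces into $L$''.

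There is no deep obstacle here: the entire content is supplied by Hall's theorem — existence of a Hall $\{p\}'$-overgroup for a given prime $p$, the passage from a complement basis to a Sylow system, and transitivity of $G$ on Sylow systems — and the rest is bookkeeping with subgroup orders. The one point needing mild care is the construction in (ii), where the primes dividing $|H|$ must be handled separately from those dividing $|G|$ but not $|H|$ in order to exhibit a complement basis of $G$ all of whose members reduce into $H$; after that, transitivity moves it onto the prescribed $\Sigma$.
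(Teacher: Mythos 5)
Your proof is correct, and it follows exactly the route the paper intends: the paper states this lemma without proof, remarking only that it ``is easily obtained from Hall's theorem and the definition of a Sylow system,'' and your argument is precisely that derivation (normal $\pi$-subgroups lie in every Hall $\pi$-subgroup for (i); a complement basis adapted to $H$ plus transitivity on Sylow systems for (ii); the lattice identities $S_\pi \cap S_\rho = S_{\pi\cap\rho}$ and the order count for (iii)). Nothing is missing.
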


For the remainder of this paper, Hypothesis \ref{solhyp} is assumed unless otherwise stated.\\

We can now demonstrate Proposition \ref{easysgp}.  By Hall's theorem, $\ccl_{\pi'}(G_r)$ is at most $\ccl(K)$, where $K$ is a Hall $\pi'$-subgroup of $G_r$.  The weaker condition in Proposition \ref{easysgp} now makes the assumption that $\ccl(K) \leq \ccl(B)$.  So we have:
\[\ccl(G) \leq \sum_{r \in R} \ccl(B) \]
and again by Hall's theorem, $|R| \leq \ccl(A)$.\\

Now suppose that for every $\pi'$-element $y$ of $G$ and $S_y$ a Hall $\pi$-subgroup of $G_y$, we have either $S_y$ isomorphic to $B$ or $\ccl(S_y) < \ccl(B)$.  Then the only way it is possible for $\ccl(G)$ to equal $\ccl(A)\ccl(B)$ is if $G_r$ contains a Hall $\pi'$-subgroup of $G$ for every $r \in R$.  By choosing the representatives $r$ such that $\Sigma \searrow G_r$, we can ensure $R \subseteq A$ and $B$ centralises $R$.  Every conjugate of an element of $A$ is of the form $r^{ba}=r^a$ for some $a \in A$, $b \in B$ and $r \in R$.  Hence $A$ is normal in $G$, which means $G$ is $\Conpi^*$ by Theorem \ref{nsgpconpi} (or by Theorem A, the proof of which does not use Proposition \ref{easysgp}).\\

We will also use a slightly different form of Eq. \eqref{pp1}.  Given finite groups $H \leq K$ and $h \in H$, define $f^K_H (h)$ to be the number of distinct conjugacy classes of $H$ found in the set $h^K \cap H$.  Note that if $k \in K$ and $h^k \in H$, we always have $f^K_H (h^k) = f^K_H (h)$, a fact which will be useful later.  Since $G$ is soluble, in \eqref{pp1} we can insist $R \subset A$, giving the following:
\begin{equation}
\label{pp2}
 \ccl(G) = \sum_{t \in T} \frac{\ccl_{\pi'}(G_t)}{f^G_A (t)}
\end{equation}
where $T$ is a set of representatives of the conjugacy classes of $A$.\\

Interestingly, if $G$ has a normal Hall $\pi$-subgroup, it is guaranteed to satsify the stronger condition in Proposition \ref{easysgp}, a fact which has consequences for a wider class of groups.  This can be derived from the Glauberman correspondence.  However, we do not need the full strength of the correspondence at this stage, so instead a proof is given below that is elementary with the assumption of Hall's Theorem.

\begin{lem}\label{alhdg}Suppose $A \lhd G$, and let $y \in B$.  Then $\ccl(A_y) \leq \ccl(A)$, with equality if and only if $A_y = A$.
\end{lem}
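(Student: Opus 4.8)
The plan is to fix $H := A\angy$ and to produce an explicit injection from the conjugacy classes of $A_y$ into the conjugacy classes of $A$; the inequality $\ccl(A_y)\le\ccl(A)$ is then immediate, and analysing when this injection is onto yields the equality clause. Throughout we use that $(|A|,|y|)=1$ and $A\lhd H$, so that $A$ is a normal Hall $\pi$-subgroup of $H$, $\angy$ is a Hall $\pi'$-subgroup of $H$, and $H/A$ is cyclic of order $|y|$. Two short coprime-order computations set things up: for $a\in A$ one has $y^{a}\in Ay$, so $a$ normalises $\angy$ exactly when $y^{a}\in\angy\cap Ay=\{y\}$, i.e.\ $N_A(\angy)=A_y$; and consequently $N_H(\angy)=A_y\angy=A_y\times\angy$.

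Define $\psi(c^{A_y})=c^{A}$ for $c\in A_y$. This is clearly well defined, and the whole argument rests on the injectivity of $\psi$, i.e.\ on showing that $A_y$ controls its own fusion in $A$. So suppose $c_1,c_2\in A_y$ with $c_2=c_1^{a}$ for some $a\in A$. Then $\angy\le H_{c_1}$ directly, while $\angy\le H_{c_2}$ gives $\angy^{a^{-1}}\le(H_{c_2})^{a^{-1}}=H_{c_1}$. Now $A\cap H_{c_1}=A_{c_1}$ is a normal $\pi$-subgroup of $H_{c_1}$ whose quotient embeds in $H/A$, hence is a $\pi'$-group of order dividing $|y|$; since $\angy\cap A=1$ this forces both $\angy$ and $\angy^{a^{-1}}$ to be Hall $\pi'$-subgroups of the soluble group $H_{c_1}$. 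By Hall's theorem there is $g\in H_{c_1}$ with $(\angy^{a^{-1}})^{g}=\angy$, so $a^{-1}g\in N_H(\angy)=A_y\angy$; writing $a^{-1}g=ty^{k}$ with $t\in A_y$ gives $g=aty^{k}$. As $g$ centralises $c_1$ and $y^{k}$ centralises $c_1$ (because $c_1\in A_y$), so does $at$, whence $c_2^{\,t}=(c_1^{a})^{t}=c_1^{at}=c_1$. Thus $c_1$ and $c_2$ are conjugate in $A_y$, $\psi$ is injective, and $\ccl(A_y)\le\ccl(A)$ follows.

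For the equality clause, one direction is trivial. Conversely, if $\ccl(A_y)=\ccl(A)$ then $\psi$ is a bijection, so every conjugacy class of $A$ meets $A_y$; equivalently $A=\bigcup_{a\in A}A_y^{\,a}$. Since a finite group is never the union of the conjugates of a proper subgroup, $A_y=A$, as required.

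I expect the injectivity of $\psi$ to be the only real point, and the place where the coprimality hypothesis is genuinely used: the crux is that $\angy$ is a \emph{Hall} $\pi'$-subgroup of $H_{c_1}$ (so that Hall's conjugacy theorem applies) together with the explicit shape of $N_H(\angy)$. The equality clause then costs nothing beyond the classical covering fact, and the two preliminary normaliser identifications are routine. Notably this argument needs only Hall's theorem and elementary group theory; the statement is also a known consequence of the Glauberman correspondence, but that is not needed here.
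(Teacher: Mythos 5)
Your proof is correct, but it takes a genuinely different route from the paper's. The paper fixes $B=\angy$ without loss of generality and argues by counting: it compares $\ccl(G)$ with $\ccl_\pi(G)|y|$ via the decomposition of elements into commuting $\pi$- and $\pi'$-parts, settles the case $|y|$ prime directly from the resulting identity $\ccl(G)=\ccl_\pi(G)+(|y|-1)\ccl(A_y)$, and handles composite $|y|$ by induction on $|G|$, passing to $\langle A_{y^n},y\rangle$ or to $G/\langle y^n\rangle$ according as $A_{y^n}$ is proper or not. You instead prove the sharper structural fact that $A_y=C_A(y)$ controls its own fusion in $A$: the injectivity of $c^{A_y}\mapsto c^A$ rests on $\angy$ and $\angy^{a^{-1}}$ being conjugate Hall $\pi'$-subgroups of the centraliser $H_{c_1}$ together with the identification $N_H(\angy)=A_y\times\angy$, and the equality clause then follows from the covering lemma that a finite group is never the union of the conjugates of a proper subgroup. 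This is the standard coprime-action fusion argument (essentially the class-level shadow of the Glauberman correspondence, which the paper explicitly chose to avoid at this stage), and all of its ingredients --- conjugacy of Hall subgroups in the soluble group $H_{c_1}$, the normaliser computation --- are legitimately available under Hypothesis \ref{solhyp}. Your version buys a cleaner, induction-free proof and the extra information that distinct $A_y$-classes stay distinct in $A$; the paper's version stays within the centraliser-counting formulae it has just developed and needs nothing beyond them. Both arguments are complete.
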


\begin{proof}Without loss of generality, we may assume $B=\angy$.  Note that under this assumption, all elements of $B$ lie in distinct conjugacy classes of $G$.\\

Using Eq. \eqref{pp1}, we obtain
\[ \ccl(G) \leq \ccl_{\pi}(G)|y| \]
with equality if and only if $B_r = B$ for every representative $r$ of the $\pi$-conjugacy classes of $G$.  As this holds regardless of choice of representatives, equality occurs if and only if $A_y = A$.\\

If $|y|$ is prime, applying \eqref{pp1} again with $\{|y|\}$ in place of $\pi$ gives
\[ \ccl(G) = \ccl_{\pi}(G) + (|y|-1)\ccl(A_y) \]
Combining this with the previous inequality gives $\ccl(A_y) \leq \ccl_\pi(G)$, with equality if and only if $A_y = A$.  Finally, it is clear that $\ccl_\pi(G) \leq \ccl(A)$ and equality occurs if $A_y = A$.\\

If $|y|$ is not prime, we proceed by induction on $|G|$.  Choose an integer $n$ so that $|y^n|$ is prime.  Note that $A_{y^n}$ is precisely the set of elements of $A$ whose orbits under $y$ have size dividing $n$, so $A_{y^n}$ is normalised by $y$.\\

If $A_{y^n} < A$, then by considering the proper subgroup $\langle A_{y^n},y \rangle$ of $G$, we obtain by induction
\[ \ccl(A_y) = \ccl((A_{y^n})_y)  \leq \ccl(A_{y^n}) \]
with equality if and only if $A_y = A_{y^n}$.

Since $|y^n|$ is prime, $\ccl(A_{y^n}) \leq \ccl(A)$, with equality if and only if $A_{y^n} = A$.

If $A_{y^n} = A$, then $Y = \langle y^n \rangle$ is central in $|G|$.  Considering the proper quotient $G/Y$ gives
\[ \ccl(A_y) = \ccl((AY/Y)_{yY})  \leq \ccl(AY/Y) = \ccl(A) \]
with equality if and only if $(AY/Y)_{yY} = AY/Y$, which holds if and only if $A_y = A$, because $Y \cap [A,B] = 1$.

Combining these inequalities gives the required result in all cases.\\
\end{proof}

We now define subgroups of $G$ analogous to the Fitting series of a finite soluble group, as follows:
\[ G\uo := 1 \; ; \quad \frac{G\uip}{G\ui} := \frac{\cO_\pi(G)\cO_{\pi'}(G)}{G\ui} \]
Where $i=1$, it will generally be omitted.\\

Write $A\ui$ for $A \cap G\ui$ and $B\ui$ for $B \cap G\ui$.  Note that $A\un=\cO_\pi(G)$ and $B\un=\cO_{\pi'}(G)$.  Define the \emph{height} of the product as follows:

Consider the least $i$ such that $A\ui = A$ or $B\ui = B$ (this will always occur; indeed $G\ui = G$ whenever $i$ is at least the Fitting height of $G$).  If $A\ui = A$ and $B\ui = B$, we say the product has height $i$; if exactly one of these equalities holds, we say the product has height $(i + \frac{1}{2})$.  So for example the direct product is the only product with height 1, and all (non-degenerate) semidirect products of $A$ and $B$ have height $1 \frac{1}{2}$.\\

Since $G$ is soluble, it is clear that the height of $G$ depends only on $\pi$ and the structure of $G$, not on the choice of $A$ and $B$.  So we may refer to the $\pi$-height $\mathrm{ht}_\pi(G)$ of $G$ without ambiguity.  Note that $\mathrm{ht}_\pi(G)$ is always at most the Fitting height of $G$, which is the motivation for the fractional numbering.  Our next aim is to prove the Conjecture for $G$ soluble satisfying $\mathrm{ht}_\pi(G) \leq 2 \frac{1}{2}$; this is Theorem A.\\

Define 
\[G\upt := \mathrm{N}_G(A) \cap \mathrm{N}_G(B) \; ; \quad A\upt := G\upt \cap A \; ; \quad B\upt := G\upt \cap B \]
From now on we will assume $\Sigma \searrow G\upt$: there will certainly be some Sylow system containing $A$ and $B$ for which this is the case, so we may assume that $\Sigma$ has been chosen accordingly.  Note that in this case $G\upt = A\upt \times B\upt $.\\

\begin{lem}Suppose $A\ut = A$, and let $x,y \in A$.  Then:

(i)$B = B\upt B\un$;

(ii) $B_x = {B\upt}_x {B\un}_x$;

(iii) if $x$ and $y$ are conjugate in $G$, they are also conjugate in $AB\upt$;

(iv) If $\Sigma \searrow G_x$, then $f^G_A (x) = |B\upt:{B\upt}_x|$.
\end{lem}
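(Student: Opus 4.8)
The plan is to prove the four statements in order: part~(i) does the structural work, parts~(ii) and~(iii) follow from one elementary commutator observation, and part~(iv) combines~(iii) with the reduction $\Sigma \searrow G_x$. For~(i), let $U$ be the subgroup of $G$ with $G\un \leq U$ and $U/G\un = \cO_\pi(G/G\un)$, so $G\ut/G\un$ is the internal direct product $(U/G\un) \times \cO_{\pi'}(G/G\un)$. The hypothesis $A\ut = A$ means $A \leq G\ut$, and then comparing $\pi$-parts forces $A$ to be a Hall $\pi$-subgroup of $G\ut$; hence $AG\un/G\un$ is a $\pi$-subgroup of $G\ut/G\un$ of order $|G\ut|_\pi/|\cO_\pi(G)| = |\cO_\pi(G/G\un)|$, so it equals $\cO_\pi(G/G\un)$ and $U = AG\un = A\,\cO_{\pi'}(G) = AB\un$. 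Since $U \lhd G$ and $A$ is a Hall $\pi$-subgroup of $U$, the Frattini argument gives $G = U\,\mathrm{N}_G(A) = B\un\,\mathrm{N}_G(A)$; as $B\un \leq B$, Dedekind's modular law yields $B = (B \cap \mathrm{N}_G(A))\,B\un = B\upt B\un$, using $B\upt = B \cap \mathrm{N}_G(A)$ (since $B \leq \mathrm{N}_G(B)$).

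The common tool for~(ii) and~(iii) is the following: if $z \in A$, $d \in B\un$ and $z^d \in A$, then $[z,d] = z^{-1}z^d$ lies in $A$, while $[z,d] \in B\un = \cO_{\pi'}(G)$ since $B\un \lhd G$; so $[z,d] \in A \cap B\un = 1$ and $d$ centralises $z$. For~(ii), write $b \in B_x$ as $cd$ with $c \in B\upt$, $d \in B\un$ by~(i); then $x^c \in A$ (as $B\upt$ normalises $A$) and $(x^c)^d = x \in A$, so $d$ centralises $x^c$, forcing $x^c = x$ and then $x^d = x$; the reverse inclusion is immediate. For~(iii), if $x^g = y$ with $g \in G$, write $g = acd$ with $a \in A$, $c \in B\upt$, $d \in B\un$, using $G = AB = A\,B\upt\,B\un$; then $z := x^{ac} \in A$ and $z^d = y \in A$, so $d$ centralises $z$ and $y = z = x^{ac}$.

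For~(iv), part~(iii) (together with the fact that $AB\upt$ normalises $A$) gives $x^G \cap A = x^{AB\upt}$. Since $A \lhd AB\upt$ acts trivially on the set of its own orbits inside $x^{AB\upt}$, the transitive action of $AB\upt$ on that set factors through $AB\upt/A \cong B\upt$, so $f^G_A(x) = |B\upt : \mathrm{Stab}_{B\upt}(x^A)|$, and a short calculation identifies $\mathrm{Stab}_{B\upt}(x^A)$ with $B\upt \cap AG_x$. The remaining, and only genuinely delicate, step is to show $B\upt \cap AG_x = {B\upt}_x$: here $\Sigma \searrow G_x$ gives $G_x = A_x B_x$ with $A_x \leq A$, $B_x \leq B$, so $AG_x = AB_x$, and any $c \in B\upt \cap AB_x$ can be written $c = a'b'$ with $a' \in A$, $b' \in B_x$; then $a' = c(b')^{-1} \in A \cap B = 1$, so $c = b' \in G_x$.

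I expect the main obstacle to be getting part~(i) right: the only place the hypothesis $A\ut = A$ is used is in recognising $AB\un$ as a \emph{normal} subgroup of $G$, after which the Frattini/Dedekind step and the coprimality arguments behind~(ii)--(iv) are routine; a secondary point needing care is the orbit count in~(iv), where $\Sigma \searrow G_x$ is exactly what collapses $B\upt \cap AG_x$ onto ${B\upt}_x$.
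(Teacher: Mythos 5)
Your proof is correct and follows essentially the same route as the paper's: part (i) is the Frattini argument applied to the normal subgroup $AB\un$ (the paper phrases it as transitivity of $B$ and of $B\un$ on the set of conjugates of $A$, with $B\upt$ the stabiliser of $A$), parts (ii) and (iii) rest on the identical observation that $d \in B\un$ with $z, z^d \in A$ forces $[z,d] \in A \cap B\un = 1$, and part (iv) performs the same orbit count over $AB\upt$ (the paper via $(AB\upt)_x = A_x {B\upt}_x$ and equality of fused class sizes, you via the stabiliser in $B\upt$ of the orbit $x^A$). The only pedantic remark is that this stabiliser is $B\upt \cap G_x A$ rather than $B\upt \cap A G_x$, but your one-line computation using $G_x = A_x B_x$ applies verbatim to either product, so nothing is lost.
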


\begin{proof}
$A\ut = A$ means $AG\un/G\un \lhd G/G\un$, so $AB\un \lhd G$, so all conjugates of $A$ in $G$ lie in $AB\un$.  $B$ acts on the set of conjugates of $A$ by conjugation, and so does $B\un$; by Hall's theorem applied to $G$ and to $AB\un$, both actions are transitive.  Since $B\un \leq B$, it follows that $B = B\upt B\un$, as $B\upt$ is the stabiliser of $A$ in the action of $B$.\\

Given $b\upt \in B\upt$ and $b\un \in B\un$, we have $x^{b\upt} \in A$ and if $x^{b\upt b\un} \in A$, then $x^{b\upt}b\un = b\un x^{b\upt}$.  So $x^{b\upt b\un} = y \Rightarrow x^{b\upt} = y$.  This proves (ii) and (iii).\\

Suppose $\Sigma \searrow G_x$.  Then it follows from Lemma \ref{xilem} that $(AB\upt)_x = A_x {B\upt}_x$.  Since $B\upt$ acts as automorphisms of $A$, it can only fuse conjugacy classes of $A$ of equal size.  Hence 
\[f^{A{B\upt}}_A (x) = \frac{|x^{A{B\upt}}|}{|x^A|} = |x^{B\upt}| = |B\upt:{B\upt}_x|\]
$f^G_A (x) = f^{A{B\upt}}_A (x)$ by (iii).
\end{proof}

\begin{thma} Suppose $\mathrm{ht}_\pi(G) \leq 2 \frac{1}{2}$.  Then $G$ is $\Conpi^*$.
\end{thma}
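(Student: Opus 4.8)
The plan is to reduce to the case $A\ut=A$ and then run a centraliser computation based on the formula \eqref{pp1}, closely modelled on the proof of Proposition \ref{easysgp}. Since $\Conpi^*$ is equivalent to $\mathrm{Con}_{\pi'}^*$ and each $G\ui$ is symmetric in $\pi$ and $\pi'$, the hypothesis $\mathrm{ht}_\pi(G)\le 2\tfrac12$ says that the least $i$ with $A\ui=A$ or $B\ui=B$ is at most $2$, and after interchanging $\pi$ and $\pi'$ if necessary (recall $A\un=\cO_\pi(G)$) we may assume $A\ui=A$ for this $i$, hence $A\ut=A$. The case $i=1$ is immediate: either $B\un=B$ too, so $G=\cO_\pi(G)\times\cO_{\pi'}(G)$ is nilpotent, or $A=\cO_\pi(G)\lhd G$ and $G$ is $\Conpi^*$ by Lemma \ref{alhdg} (a normal Hall $\pi$-subgroup forces the strong hypothesis of Proposition \ref{easysgp}). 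So assume $A\ut=A$, and choose $\Sigma$ with $\Sigma\searrow G\upt$, so that the lemma preceding the statement is available.

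The heart of the argument is the following estimate: if $t\in A$ and $\Sigma\searrow G_t$, then $\ccl_{\pi'}(G_t)\le f^G_A(t)\,\ccl(B)$. Indeed, $B_t=B\cap G_t$ is a Hall $\pi'$-subgroup of $G_t$, so $\ccl_{\pi'}(G_t)\le\ccl(B_t)$, and $B_t={B\upt}_t\,{B\un}_t$ by part (ii) of the preceding lemma. Interpose the subgroup $C:={B\upt}_t\,B\un$ of $B=B\upt B\un$. Since $B\upt\cap B\un=B\upt\cap\cO_{\pi'}(G)$ centralises $A$ (because $[A,u]\le A\cap\cO_{\pi'}(G)=1$ for $u$ in it), one checks that $|B:C|=|B\upt:{B\upt}_t|=f^G_A(t)$ (using part (iv) of that lemma), that $t$ normalises $C$ and acts coprimely on it, and that $C\cap G_t=B_t$; hence Lemma \ref{alhdg}, with $\pi$ and $\pi'$ interchanged and applied in $C\langle t\rangle$, gives $\ccl(B_t)=\ccl(C_t)\le\ccl(C)$, while $\ccl(C)\le|B:C|\,\ccl(B)$ by the left-hand inequality of Corollary \ref{sgpccl}. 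Combining these, $\ccl_{\pi'}(G_t)\le\ccl(B_t)\le\ccl(C)\le f^G_A(t)\,\ccl(B)$.

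Now choose a representative $r$ of each conjugacy class of $\pi$-elements of $G$ with $\Sigma\searrow G_r$; then $r\in A$ by Lemma \ref{xilem}, and as $r$ ranges over these, $\ccl(A)=\sum_r f^G_A(r)$ and $\ccl(G)=\sum_r\ccl_{\pi'}(G_r)$ by \eqref{pp1}, so $\ccl(A)\ccl(B)-\ccl(G)=\sum_r\bigl(f^G_A(r)\ccl(B)-\ccl_{\pi'}(G_r)\bigr)\ge 0$; thus $G$ is $\Conpi$. If equality holds then every summand vanishes, which by Corollary \ref{sgpccl} forces $C=B$, i.e. $f^G_A(r)=1$, and then $\ccl(C_r)=\ccl(B)$ forces $C_r=B$ by Lemma \ref{alhdg}, i.e. $B\le G_r$ for each such $r$; the fusion argument from the proof of Proposition \ref{easysgp} (writing an arbitrary conjugate of $a_0\in A$ as $a_0^g=r^{ba}=r^a$ with $r$ centralised by $b\in B$ and $a\in A$, using $G=AB$) then shows $A\lhd G$. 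Finally, applying the $\pi'$-version of \eqref{pp1} together with Lemma \ref{alhdg} exactly as in the discussion following the proof of Proposition \ref{easysgp}, equality also forces every $\pi'$-class representative $s$ with $\Sigma\searrow G_s$ to centralise $A$, whence $B\lhd G$ by the same fusion argument; so $G=A\times B$ and $G$ is $\Conpi^*$.

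The step I expect to be the main obstacle is the estimate $\ccl_{\pi'}(G_t)\le f^G_A(t)\,\ccl(B)$: a direct application of Corollary \ref{sgpccl} to $B_t\le B$ would also incur a spurious factor $|B\un:{B\un}_t|$, and the role of the intermediate group $C={B\upt}_t\,B\un$ is precisely that $B_t=C_t$, so that the contraction of $B\un$ to ${B\un}_t$ is absorbed by Lemma \ref{alhdg} without any change in the class count. Verifying the three bookkeeping facts about $C$ — that $|B:C|=f^G_A(t)$, that $t$ normalises $C$, and that $C\cap G_t=B_t$ — is where the care is needed.
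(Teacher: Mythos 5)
Your proof is correct and follows essentially the same route as the paper: reduce to $A\ut=A$, use Eq.~\eqref{pp2} (equivalently \eqref{pp1} with representatives chosen so that $\Sigma\searrow G_r$), bound $\ccl_{\pi'}(G_t)$ via Lemma \ref{alhdg} applied to a $t$-invariant overgroup of $B_t$ inside $B$, then Corollary \ref{sgpccl}, with the same equality analysis. The only cosmetic difference is that you work with the explicit subgroup $C={B\upt}_t B\un=B_t B\un$, whereas the paper uses $B_{*t}=\bigcap_{z\in\langle t\rangle}B^z$, which contains your $C$ and plays exactly the same role.
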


\begin{proof} We assume $A\ut = A$.  Start with Eq. \eqref{pp2}:
\[ \ccl(G) = \sum_{t \in T} \frac{\ccl_{\pi'}(G_t)}{f^G_A (t)} \]
The summands are not affected by replacing each $t$ by some other element $z_t$, so long as $z_t$ is conjugate to $t$ in $G$ and $z_t \in A$.  By Lemma \ref{xilem}, we can do this in such a way that $\Sigma \searrow G_{z_t}$.\\

Given $y \in A$, define $B_{*y} = \bigcap_{z \in \angy} B^z$.  It is clear that $B_y B\un \leq B_{*y}$, so $|B:B_{*y}| \leq |B\upt:{B\upt}_y|$.  Since $y$ normalises $B_{*y}$, it follows from Lemma \ref{alhdg} that $\ccl(B_{*y}) \geq \ccl(B_y)$, with equality if and only if $B_{*y} = B_y$.\\

Now applying previous results:
\begin{align*}
\ccl(G) &= \sum_{t \in T} \frac{\ccl_{\pi'}(G_{z_t})}{f^G_A (z_t)} \\
&= \sum_{t \in T} \frac{\ccl_{\pi'}(G_{z_t})}{|B\upt:{B\upt}_{z_t}|} \\
&\leq \sum_{t \in T} \frac{\ccl(B_{z_t})}{|B\upt:{B\upt}_{z_t}|} \\
&\leq \sum_{t \in T} \frac{\ccl(B_{*z_t})}{|B\upt:{B\upt}_{z_t}|} \\
&\leq \sum_{t \in T} \frac{\ccl(B_{*z_t})}{|B:B_{*z_t}|} \\
&\leq \sum_{t \in T} \ccl(B) \\
&= \ccl(A) \ccl (B).
\end{align*}\\

For equality to occur, we must have equality at every stage.  In particular, for each $z_t$ we must have $\ccl(B_{z_t}) = \ccl(B_{*z_t})$, which means $B_{z_t} = B_{*z_t}$, and $\ccl(B_{*z_t})/|B:B_{*z_t}| = \ccl(B)$, which means that $B_{*z_t} = B$.  So
\[z^G_t = z^{BA}_t = z^A_t \subset A \]
for each $z_t$.  But the $z_t$ represent every conjugacy class of $\pi$-elements in $G$, so $B$ normalises $A$.  This means $B\ut = B$, so by the same argument again with the roles of $A$ and $B$ reversed,  $A$ normalises $B$.  Hence $G \cong A \times B$.
\end{proof}

\section{Character theory}

\subsection{Preliminaries}

Using character theory, in certain situations we can conclude that $G$ is $\Conpi$ from the fact that certain proper sections of $G$ are $\Conpi$, and similarly for $\Conpi^*$.\\

Here are some known results which will be used to prove Theorems B, C and D.  In the following, let $H$ be a finite group, with $N \lhd H$, and let $\nu \in \Irr(N)$ and $\eta \in \Irr(H|\nu)$.

\begin{thm}[Clifford's theorem]
 \[ \eta_N = e(\nu_1 + \dots + \nu_k) \]
 where $e$ is a positive integer and $\{\nu_1, \dots \nu_k\}$ is the set of conjugates of $\nu$ under the action of $H$.
\end{thm}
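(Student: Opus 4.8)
The plan is to work at the level of modules. Let $V$ be a $\mathbb{C}H$-module affording $\eta$, and restrict the $H$-action to $N$. I would first record the basic observation that $\eta_N$ is $H$-invariant as a character of $N$: for $n \in N$ and $h \in H$, the element $h^{-1}nh$ lies in $N$ (since $N \lhd H$) and is conjugate to $n$ in $H$, so $\eta(h^{-1}nh) = \eta(n)$, i.e. $(\eta_N)^h = \eta_N$. Equivalently, conjugation by any $h \in H$ permutes the irreducible constituents of $\eta_N$ among themselves and preserves their multiplicities: if $\theta^h = \theta'$ for constituents $\theta,\theta' \in \Irr(N)$, then $[\eta_N,\theta] = [\eta_N,\theta']$.

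Next I would analyse the decomposition of $V$ into $N$-homogeneous components. Write $V|_N = \bigoplus_{\theta \in \Irr(N)} V_\theta$, where $V_\theta$ is the $\theta$-isotypic part of $V$ (possibly zero). The key structural point is that $hV_\theta = V_{\theta^h}$ for every $h \in H$: if $v \in V_\theta$ then for $n \in N$ one has $n(hv) = h(h^{-1}nh)v$, and $h^{-1}nh$ acts on $V_\theta$ through $\theta$, so $hv$ transforms under $N$ through $\theta^h$. Hence $H$ permutes the nonzero homogeneous components of $V|_N$ compatibly with its action on $\Irr(N)$; consequently, for any single $H$-orbit $\mathcal{O} \subseteq \Irr(N)$, the subspace $\bigoplus_{\theta \in \mathcal{O}} V_\theta$ is stable under both $N$ and $H$, and is therefore a $\mathbb{C}H$-submodule of $V$.

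Now I would invoke irreducibility of $V$. Since $\nu$ occurs in $\eta_N$ we have $V_\nu \neq 0$, so the $\mathbb{C}H$-submodule attached to the $H$-orbit of $\nu$ is nonzero, hence equals $V$; writing that orbit as $\{\nu_1,\dots,\nu_k\}$, we get $V|_N = V_{\nu_1} \oplus \dots \oplus V_{\nu_k}$, so the constituents of $\eta_N$ are exactly $\nu_1,\dots,\nu_k$. Finally, because $H$ permutes the $V_{\nu_i}$ transitively, these subspaces all have the same dimension, so $[\eta_N,\nu_i]$ is a single positive integer $e$ (positive because $\eta \in \Irr(H|\nu)$ forces $[\eta_N,\nu]\neq 0$), giving $\eta_N = e(\nu_1 + \dots + \nu_k)$. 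As an alternative to the module argument one could use Frobenius reciprocity and Mackey's formula: $[\eta,\nu^H]_H = [\eta_N,\nu]_N > 0$, so $\eta$ is a constituent of $\nu^H$, while $(\nu^H)_N$ is an integer multiple of $\nu_1 + \dots + \nu_k$; this confines every constituent of $\eta_N$ to the orbit of $\nu$, and the constant multiplicity then follows from the $H$-invariance of $\eta_N$ established in the first step.

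This result is classical and short, so there is no serious obstacle; the only step that needs care is the identification $hV_\theta = V_{\theta^h}$ together with the resulting fact that orbit-sums of homogeneous components are $\mathbb{C}H$-submodules. Once that is in place, irreducibility of $V$ does all the remaining work.
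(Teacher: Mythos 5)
Your proposal is correct, and it is the standard module-theoretic proof of Clifford's theorem (essentially Theorem 6.2 of Isaacs' \emph{Character Theory of Finite Groups}, the reference the paper uses for the neighbouring results). The paper itself offers no proof of this statement: Clifford's theorem is quoted as a known classical fact among the ``known results which will be used to prove Theorems B, C and D'', so there is no argument in the paper to compare against; you have simply supplied the textbook proof that the citation stands in for. The one point worth flagging is a bookkeeping convention: with the paper's definition $\theta^h(k^h)=\theta(k)$, the trace computation in your key step gives $hV_\theta = V_{\theta^{h^{-1}}}$ rather than $V_{\theta^h}$ (the two agree under the opposite, equally common, convention for the action on characters). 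This is harmless --- either way $H$ permutes the nonzero homogeneous components transitively over a single orbit of $\Irr(N)$, which is all the argument needs --- but it is the kind of sign error worth being conscious of.
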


\begin{thm}
(i) There is a 1-1 correspondence between $\Irr(H|\nu)$ and $\Irr(H_\nu|\nu)$.
 
(ii) Suppose $\nu=\phi\theta$, for $\phi, \theta \in \Irr(N)$, such that $\phi$ and $\theta$ are invariant in $H_\nu$ and $\theta$ extends to $\chi \in \Irr(H_\nu)$.  Then $|\Irr(H_\nu|\nu)| = |\Irr(H_\nu|\phi)|$.

(iii)If $K$ is any subgroup of $G$ containing $N$, then $|\Irr(K|1_N)|=\ccl(K/N)$.
\end{thm}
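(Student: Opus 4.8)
I would prove the three parts in turn; (i) is the Clifford correspondence, (iii) is a formal consequence of Clifford's theorem, and (ii) is the one piece of genuine content, resting on projective representations.

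For (i), the plan is to show that induction $\psi \mapsto \psi^H$ is a bijection $\Irr(H_\nu|\nu) \to \Irr(H|\nu)$. Since $H_\nu$ is the inertia group, $\nu$ is $H_\nu$-invariant, so by Clifford's theorem $\psi_N$ is a multiple of $\nu$ for every $\psi \in \Irr(H_\nu|\nu)$. A Mackey computation then shows that in $[\psi^H,\psi^H] = [\psi,(\psi^H)_{H_\nu}]$ (Frobenius reciprocity) the only contribution to $(\psi^H)_{H_\nu}$ containing $\psi$ comes from the trivial double coset: the remaining terms, restricted to $N$, involve only $H$-conjugates of $\nu$ other than $\nu$ itself, hence cannot contain $\psi$. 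So $[\psi^H,\psi^H]=1$, $\psi^H \in \Irr(H)$, and $\psi^H$ clearly lies over $\nu$. For the inverse, given $\chi \in \Irr(H|\nu)$ one checks (again via Clifford inside $H_\nu$ and Frobenius reciprocity) that $\chi_{H_\nu}$ has a unique irreducible constituent $\psi$ lying over $\nu$, and that $\psi^H = \chi$. This is standard (see, e.g., \cite{6}).

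For (ii), I would first pass to $H_\nu$, so that $\phi$, $\theta$ and $\nu = \phi\theta$ are all $H$-invariant, and fix an extension $\chi \in \Irr(H)$ of $\theta$; the claim is that $\eta \mapsto \eta\chi$ gives a bijection $\Irr(H|\phi) \to \Irr(H|\nu)$. I would establish it through projective representations: choose a projective representation $\mathcal{X}$ of $H$ with $\mathcal{X}_N$ affording $\phi$ and factor set $\alpha$ inflated from $H/N$. Because $\chi$ is an ordinary representation, $\mathcal{X} \otimes \chi$ is again a projective representation, now with $(\mathcal{X} \otimes \chi)_N$ affording $\nu$ and the \emph{same} factor set $\alpha$. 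Every $\eta \in \Irr(H|\phi)$ is afforded by $\mathcal{X} \otimes \mathcal{Y}$ for a unique irreducible projective representation $\mathcal{Y}$ of $H/N$ with factor set $\alpha^{-1}$, and every such character is irreducible; the identical description applies to $\Irr(H|\nu)$ with $\mathcal{X} \otimes \chi$ replacing $\mathcal{X}$. Since the indexing family of $\mathcal{Y}$'s is the same, the assignment $\mathcal{X} \otimes \mathcal{Y} \mapsto (\mathcal{X} \otimes \chi) \otimes \mathcal{Y} \cong (\mathcal{X} \otimes \mathcal{Y}) \otimes \chi$ is a bijection $\Irr(H|\phi) \to \Irr(H|\nu)$, and it is exactly $\eta \mapsto \eta\chi$. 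Hence $|\Irr(H_\nu|\nu)| = |\Irr(H_\nu|\phi)|$. (Alternatively, one may invoke a character-triple isomorphism; cf. \cite{6}.)

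For (iii), note that $N \lhd K$ since $N \lhd H$ and $N \leq K \leq H$. If $\chi \in \Irr(K)$ and $[\chi_N,1_N]\neq 0$, then $1_N$ occurs in $\chi_N$; being $K$-invariant it is the only $K$-conjugate that can occur, so by Clifford's theorem $\chi_N$ is a multiple of $1_N$, i.e.\ $N \leq \krn(\chi)$. Thus $\Irr(K|1_N)$ consists exactly of the characters of $K$ inflated from $K/N$, inflation is a bijection $\Irr(K/N) \to \Irr(K|1_N)$, and $|\Irr(K/N)| = \ccl(K/N)$ since a finite group has as many irreducible complex characters as conjugacy classes. The main obstacle in all of this is part (ii): parts (i) and (iii) are bookkeeping with Clifford's theorem, but in (ii) the map $\eta \mapsto \eta\chi$ does not obviously land in $\Irr(H|\nu)$ when $\theta$ is non-linear, and the only clean way I see to force this (and the bijectivity) is the factor-set calculus above, whose key input is that the extendibility of $\theta$ makes the factor set attached to $\nu$ cohomologous to that attached to $\phi$.
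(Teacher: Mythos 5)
Your proposal is correct. Bear in mind that the paper gives essentially no argument here: for (i) and (ii) it simply cites Theorems 6.11(b) and 6.16 of \cite{6}, and for (iii) it gives the same one-line observation you make (the characters lying over $1_N$ are exactly those with $N$ in their kernel, hence biject with $\Irr(K/N)$). Your proof of (i) --- induction from the inertia group, with the Mackey/Frobenius computation showing $[\psi^H,\psi^H]=1$ because the non-trivial double cosets contribute only constituents lying over $H$-conjugates $\nu^g \neq \nu$ --- is precisely the standard proof of the cited result, so there is nothing to compare there. For (ii) you do take a genuinely different route from the cited source: Isaacs proves Theorem 6.16 within ordinary character theory, using the identity $\phi^{H}\chi=(\phi\,\chi_N)^{H}=(\phi\theta)^{H}$ together with Frobenius reciprocity to match multiplicities, with no factor sets in sight. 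Your projective-representation argument is heavier machinery but is also correct: it exhibits the bijection $\eta\mapsto\eta\chi$ explicitly and makes transparent why extendibility of $\theta$ is the right hypothesis, namely that it forces the factor set attached to $\nu=\phi\theta$ to coincide with that attached to $\phi$, so that $\Irr(H_\nu|\phi)$ and $\Irr(H_\nu|\nu)$ are indexed by the same set of irreducible projective representations of $H_\nu/N$ with factor set $\alpha^{-1}$. Either route establishes the statement; yours is self-contained modulo the theory of character triples, whereas the paper's is a bare citation.
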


\begin{proof}
(i) See Theorem 6.11 (b) of \cite{6}.
 
(ii) See Theorem 6.16 of \cite{6}.
 
(iii) $\Irr(K|1_N)$ consists of precisely those $\chi \in \Irr(K)$ for which $\krn(\chi) \geq N$.  These characters are in 1-1 correspondence with $\Irr(K/N)$ in an obvious way.
\end{proof}

\begin{cor}\label{irrcor}
\[ \ccl(H) = \sum_{\nu \in \Irr(N)}  \frac{|\Irr(H_\nu|\nu)|}{|H:H_\nu|}.\]
\end{cor}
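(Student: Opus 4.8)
The plan is to write $\ccl(H) = |\Irr(H)|$ and partition $\Irr(H)$ according to the constituents of the restriction to $N$. By Clifford's theorem, for each $\chi \in \Irr(H)$ the irreducible constituents of $\chi_N$ form a single $H$-orbit in $\Irr(N)$, so $\Irr(H)$ is the disjoint union of the sets $\Irr(H \mid \nu)$ as $\nu$ ranges over a set of representatives of the $H$-orbits on $\Irr(N)$. This gives
\[ \ccl(H) = \sum_{\mathcal{O}} |\Irr(H \mid \nu_{\mathcal{O}})|, \]
the sum being over the $H$-orbits $\mathcal{O}$ on $\Irr(N)$, with $\nu_{\mathcal{O}} \in \mathcal{O}$ chosen arbitrarily.

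Next I would replace $|\Irr(H \mid \nu_{\mathcal{O}})|$ by $|\Irr(H_{\nu_{\mathcal{O}}} \mid \nu_{\mathcal{O}})|$ using part (i) of the preceding theorem (the Clifford correspondence), and then redistribute each orbit's contribution evenly over the orbit. The key observation is that $|\Irr(H_\nu \mid \nu)|$ is constant as $\nu$ runs over a single orbit: if $\nu' = \nu^h$ then $H_{\nu'} = (H_\nu)^h$ and conjugation by $h$ carries $\Irr(H_\nu \mid \nu)$ bijectively onto $\Irr(H_{\nu'} \mid \nu')$. Likewise $|H : H_\nu| = |\mathcal{O}|$ is the common orbit size, by the Orbit–Stabiliser Theorem. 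Hence for a fixed orbit $\mathcal{O}$,
\[ \sum_{\nu \in \mathcal{O}} \frac{|\Irr(H_\nu \mid \nu)|}{|H : H_\nu|} = |\mathcal{O}| \cdot \frac{|\Irr(H_{\nu_{\mathcal{O}}} \mid \nu_{\mathcal{O}})|}{|\mathcal{O}|} = |\Irr(H_{\nu_{\mathcal{O}}} \mid \nu_{\mathcal{O}})| = |\Irr(H \mid \nu_{\mathcal{O}})|. \]
Summing over all orbits and comparing with the displayed expression for $\ccl(H)$ yields the claimed identity.

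There is no real obstacle here: the statement is a bookkeeping consequence of Clifford's theorem together with the Clifford correspondence, and the only point requiring a (routine) check is the conjugation-invariance of the quantities $|\Irr(H_\nu \mid \nu)|$ and $|H:H_\nu|$ along an orbit, which is what makes the "divide by the orbit size" rewriting legitimate. I would state that check explicitly and otherwise keep the argument short.
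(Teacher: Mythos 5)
Your proof is correct and is exactly the derivation the paper intends: the corollary is stated without proof immediately after Clifford's theorem and the Clifford correspondence, and your argument (partition $\Irr(H)$ by the $H$-orbit of the constituents of $\chi_N$, apply the correspondence $|\Irr(H|\nu)|=|\Irr(H_\nu|\nu)|$, and spread each orbit's contribution over its $|H:H_\nu|$ members using conjugation-invariance) is the standard bookkeeping that justifies it. No issues.
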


\begin{thm}\label{extnthm}
Suppose one of the following holds:
 
(i) $H_\nu/N$ is cyclic;
 
(ii) $(|H_\nu/N|,|N|)=1$;

(iii) $N$ is abelian and complemented in $H_\nu$.
 
Then $\nu$ extends to $H_\nu$.
\end{thm}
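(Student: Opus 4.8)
The statement is the classical theorem on extending an invariant irreducible character of a normal subgroup, and I would prove it mostly by appeal to the character theory of \cite{6}, reproducing the short arguments and citing the longer one. First note that $N \lhd H_\nu$ and that $\nu$ is invariant in $H_\nu$ by the very definition of $H_\nu$, so in every case the task is the familiar one: given $N \lhd M$ (take $M = H_\nu$) and an $M$-invariant $\nu \in \Irr(N)$, produce $\chi \in \Irr(M)$ with $\chi_N = \nu$. The obstruction is the cohomology class of the factor set $\beta$ of a projective representation $\mathcal{P}$ of $M$ lying over $\nu$ and restricting to an ordinary representation of $N$, with $\beta$ inflated from $\bar M := M/N$ (the machinery of \cite{6}); the extension exists if and only if this class in $H^2(\bar M, \mathbb{C}^\times)$ is trivial. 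So it suffices to kill it under each of the three hypotheses.

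For (i), write $\bar M = \langle gN \rangle$ of order $m$. Take a representation $\mathcal{X}$ affording $\nu$; invariance gives an invertible matrix $P$ with $\mathcal{X}(n^g) = P^{-1}\mathcal{X}(n)P$ for all $n \in N$. Iterating $m$ times and using $g^m \in N$, the matrix $P^m\mathcal{X}(g^m)^{-1}$ centralises $\mathcal{X}(N)$, hence is scalar by Schur's lemma; after multiplying $P$ by a suitable scalar we may assume $P^m = \mathcal{X}(g^m)$, and then $g^i n \mapsto P^i \mathcal{X}(n)$ is a well-defined homomorphism of $M$ extending $\mathcal{X}$. For (iii), $N$ abelian makes $\nu$ linear, so any extension is linear and it is enough to extend the homomorphism $\nu \colon N \to \mathbb{C}^\times$; writing $M = NC$ with $C$ a complement and $N \cap C = 1$, the assignment $\hat\nu(nc) := \nu(n)$ is well-defined by uniqueness of the factorisation $nc$, and the invariance identity $\nu(n^c) = \nu(n)$ shows it is multiplicative, hence a linear character of $M$ restricting to $\nu$.

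The case with real content is (ii). Here the obstruction class is annihilated by $|\bar M|$, since $H^2(\bar M, \mathbb{C}^\times)$ has exponent dividing $|\bar M|$; and by taking determinants in the defining relations $\mathcal{P}(x)\mathcal{P}(y) = \beta(x,y)\mathcal{P}(xy)$ one sees it is also annihilated by a divisor of $|N|$ (one obtains that $\beta^{\nu(1)}$ is cohomologous to the coboundary coming from the determinantal character of $\nu$, so the order of the class divides $\nu(1)\,o(\nu)$, which divides $|N|$). Since $(|N|,|\bar M|)=1$, the class is trivial and $\nu$ extends. This is the coprime specialisation of the classical invariant-extension theorem; see \cite{6}.

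I do not expect any genuine obstacle, as all three parts are standard. Parts (i) and (iii) are immediate once the right map is written down, so the only step needing care is (ii), where one must run the factor-set/determinant computation cleanly and invoke the routine divisibilities $\nu(1) \mid |N|$, $o(\nu) \mid |N|$ and $\exp H^2(\bar M, \mathbb{C}^\times) \mid |\bar M|$. Accordingly I would present (i) and (iii) in full and cite \cite{6} for (ii), including the short determinant remark for completeness.
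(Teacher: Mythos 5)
Your proposal is correct: parts (i) and (iii) are the standard direct constructions, and part (ii) is the usual factor-set/determinantal-order argument showing the obstruction class has order dividing both $|N|$ and $|H_\nu/N|$. The paper itself gives no argument beyond citing \cite{6} (Corollary 11.22 for (i), Corollary 6.28 for (ii), Problem 6.18 for (iii)), which are precisely the results you reprove, so your approach is essentially the same, just written out in full.
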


\begin{proof}
(i) This is Corollary 11.22 of \cite{6}.
 
(ii) This is a special case of Corollary 6.28 of \cite{6}.
 
(iii) See Problem 6.18 of \cite{6}.
\end{proof}

\begin{prop}[Consequence of the Glauberman correspondence]\label{glaub}
Let $T$ be a soluble subgroup of $\Aut(H)$ such that $(|H|,|T|)=1$.  Then
\[ |\Irr_T(H)| = \ccl(H_T) \]
If $\Irr(H) = \Irr_T(H)$ then $T=1$.
\end{prop}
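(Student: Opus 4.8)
The plan is to read off the first assertion directly from the Glauberman correspondence, and to obtain the second by a short elementary argument once one knows that $T$ fixes every conjugacy class of $H$.

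For the equality $|\Irr_T(H)| = \ccl(H_T)$: since $T$ is a soluble group of automorphisms of $H$ acting coprimely, the Glauberman correspondence supplies a bijection from $\Irr_T(H)$ to $\Irr(H_T)$. Combining this with the identity $|\Irr(K)| = \ccl(K)$, valid for any finite group $K$, gives $|\Irr_T(H)| = |\Irr(H_T)| = \ccl(H_T)$. This step is essentially a matter of unwinding definitions.

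For the second assertion I would argue by contradiction. Suppose $\Irr(H) = \Irr_T(H)$ but $T \ne 1$, and fix $t \in T$ of prime order $p$; coprimality forces $p \nmid |H|$. Since $\Irr(H) = \Irr_T(H) \subseteq \Irr_{\langle t \rangle}(H) \subseteq \Irr(H)$, the automorphism $t$ fixes every irreducible character of $H$, so by Brauer's permutation lemma (the number of conjugacy classes of $H$ fixed by $t$ equals the number of $t$-fixed members of $\Irr(H)$) it also fixes, setwise, every conjugacy class of $H$. Now each conjugacy class $C$ of $H$ is $\langle t \rangle$-invariant and has size coprime to $p$, so the group $\langle t \rangle$ of order $p$ has a fixed point on $C$; that is, $C$ meets the fixed-point subgroup $H_{\langle t \rangle}$. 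Picking such a point $c \in C$ and writing $C = c^H$ gives $C \subseteq \bigcup_{g \in H} (H_{\langle t \rangle})^g$, and letting $C$ range over all classes yields $H = \bigcup_{g \in H} (H_{\langle t \rangle})^g$. Since a finite group is never the union of the conjugates of a proper subgroup, $H_{\langle t \rangle} = H$, i.e.\ $t$ centralises $H$; as $t$ is a non-trivial element of $\Aut(H)$, this is absurd. Hence $T = 1$.

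I expect the only real subtlety to be the passage from characters to conjugacy classes: knowing merely $\ccl(H_T) = \ccl(H)$ from the first part does not by itself force $H_T = H$, so one genuinely needs Brauer's permutation lemma to transfer the hypothesis to the set of conjugacy classes, after which the classical fact that no finite group is covered by the conjugates of a proper subgroup finishes the job. Everything else is routine.
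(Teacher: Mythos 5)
Your argument is correct. The paper itself states Proposition \ref{glaub} without proof, treating it as a known consequence of the Glauberman correspondence, and your derivation is precisely the standard one: the equality $|\Irr_T(H)| = \ccl(H_T)$ is immediate from the bijection $\Irr_T(H) \to \Irr(H_T)$ together with $|\Irr(K)| = \ccl(K)$, and the second assertion follows by passing to an element $t \in T$ of prime order, invoking Brauer's permutation lemma for the cyclic group $\langle t \rangle$ to see that every conjugacy class is $t$-invariant, extracting a fixed point in each class by the coprime orbit-counting argument, and concluding from the fact that no finite group is the union of the conjugates of a proper subgroup. You are also right that the subtle point is exactly the one you flag: $\ccl(H_T) = \ccl(H)$ alone does not force $H_T = H$, so the detour through conjugacy classes is genuinely needed.
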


\subsection{Theorem B}

Given Hypothesis \ref{solhyp}, we can define a bijection $*$ from $G$ to $A \times B = G^*$ given by $(ab)^* = (a,b)$.  Similarly, given a subset $X$ of $G$, we may define $X^*$ to be the image of $X$ under $*$.  Consider now a subgroup $H$ of $G$.  In general, $H^*$ may not be a group.  In fact, $H^*$ is a group precisely if $H = (H \cap A)(H \cap B)$.  In this case we say $H$ is \emph{split} under the given factorisation of $G$.\\

For $H$ to split, it is certainly sufficient that $\Sigma \searrow H$.  Lemma \ref{xilem} thus ensures that every subgroup of $G$ is conjugate to a split subgroup.  In particular, all normal subgroups of $G$ are split, and moreover if $N \unlhd G$ then $N^* \unlhd G^*$.  Note also that if $N_1, N_2 \unlhd G$, then $(N_1 N_2)^* = N^*_1 N^*_2$.

\begin{lem}\label{extnlem}
Let $G$ and $*$ be as above, with $N \lhd G$ such that $G/N$ is cyclic and $(|G/N|,|N|)=1$.  Define
\[ \iota_k(N;G) = |\{\nu \in \Irr(N)||G_{\nu}:N| \geq k \}| \]
and suppose that
\[ \iota_k(N;G) \leq \iota_k(N^*;G^*) \quad \forall k \geq 0\]
Then $G$ is $\Conpi$, and the inequality given in the Conjecture is an equality if and only if
  \[ \iota_k(N;G) = \iota_k(N^*;G^*) \quad \forall k \geq 0\]
\end{lem}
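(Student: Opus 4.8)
The plan is to convert Corollary~\ref{irrcor} into an exact closed formula for $\ccl(H)$, valid whenever $M \lhd H$ with $H/M$ cyclic and $(|H/M|,|M|)=1$, and then apply that formula to the two pairs $(G,N)$ and $(G^*,N^*)$, comparing the resulting expressions coefficient by coefficient.

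First I would establish the general formula. If $M \lhd H$ with $H/M$ cyclic of order coprime to $|M|$, then for each $\nu \in \Irr(M)$ the inertia group $H_\nu$ satisfies that $H_\nu/M$ is cyclic of order coprime to $|M|$, so by Theorem~\ref{extnthm} the character $\nu$ extends to $H_\nu$; Gallagher's theorem (equivalently, parts (ii) and (iii) of the theorem preceding Corollary~\ref{irrcor}, applied with $\phi = 1_M$ and $\theta = \nu$) then gives $|\Irr(H_\nu|\nu)| = \ccl(H_\nu/M) = |H_\nu:M|$. Substituting into Corollary~\ref{irrcor} and using $|H:H_\nu|\,|H_\nu:M| = |H:M|$,
\[ \ccl(H) = \sum_{\nu\in\Irr(M)}\frac{|H_\nu:M|}{|H:H_\nu|} = \frac{1}{|H:M|}\sum_{\nu\in\Irr(M)}|H_\nu:M|^2. \]
Then I would invoke the identity $m^2 = \sum_{k=1}^m(2k-1)$ and interchange the order of summation. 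Since $\iota_k(M;H)$ counts the $\nu$ with $|H_\nu:M|\ge k$, and vanishes once $k > |H:M|$, this yields the finite sum
\[ \ccl(H) = \frac{1}{|H:M|}\sum_{k\ge1}(2k-1)\,\iota_k(M;H). \]

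Next I would apply this formula to $(G,N)$ directly, and to $(G^*,N^*)$ once the hypotheses are checked to carry over. The only point needing a small argument is that $G^*/N^*$ is cyclic: as $N$ is split, $N^* = (A\cap N)\times(B\cap N)$, so $G^*/N^* \cong A/(A\cap N)\times B/(B\cap N)$; each factor is cyclic, being isomorphic to the subgroup $AN/N$, respectively $BN/N$, of the cyclic group $G/N$, and the two factors have coprime ($\pi$- and $\pi'$-) orders, so the product is cyclic. Also $|G^*/N^*| = |G/N|$ is coprime to $|N^*| = |N|$. Since $\ccl(G^*) = \ccl(A)\ccl(B)$ and $|G^*:N^*| = |G:N|$, applying the formula to both pairs and subtracting gives
\[ \ccl(A)\ccl(B) - \ccl(G) = \frac{1}{|G:N|}\sum_{k\ge1}(2k-1)\bigl(\iota_k(N^*;G^*)-\iota_k(N;G)\bigr). \]
Every coefficient $2k-1$ is positive, so under the hypothesis $\iota_k(N;G)\le\iota_k(N^*;G^*)$ the right-hand side is nonnegative, proving $G$ is $\Conpi$. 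It equals $0$ precisely when every summand vanishes, i.e.\ $\iota_k(N;G)=\iota_k(N^*;G^*)$ for all $k\ge1$; and since $\iota_0 = \iota_1$ for both pairs (every $\nu$ has $|H_\nu:M|\ge1$), this is equivalent to equality for all $k\ge0$, which is the stated condition.

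There is no deep obstacle here: the substance is the bookkeeping that rewrites the sum of squares $\sum_\nu|H_\nu:M|^2$ as the weighted sum $\sum_k(2k-1)\,\iota_k$, after which the comparison with $G^*$ is immediate. The two things to watch are (a) confirming that $(G^*,N^*)$ again meets the standing hypotheses — in particular that $G^*/N^*$ is cyclic — so the closed formula is legitimately applicable to it, and (b) correctly reducing the evaluation of $|\Irr(H_\nu|\nu)|$ to $\ccl(H_\nu/M)$ via the extension of $\nu$ and Gallagher's theorem; both are routine once the setup is in place.
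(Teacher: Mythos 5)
Your proposal is correct and follows essentially the same route as the paper: derive $\ccl(H) = \frac{1}{|H:M|}\sum_{\nu}|H_\nu:M|^2$ from Corollary~\ref{irrcor} together with the extension of $\nu$ to its inertia group, and then compare with $G^*$ using the fact that $m \mapsto m^2$ is increasing. The paper leaves the final comparison as a one-line remark ("a strictly increasing function of $|G_\nu:N|$"), whereas you make it explicit via the layer-cake identity $m^2 = \sum_{k=1}^m (2k-1)$ and also verify that $G^*/N^*$ is cyclic; these are worthwhile details but not a different argument.
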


\begin{proof}
Since every $\nu \in \Irr(N)$ extends to its centraliser $G_\nu$ and $G/N$ is abelian,
\[ \ccl(G) = \sum_{\nu \in \Irr(N)} \frac{\ccl(G_{\nu}/N)}{|G:G_{\nu}|}= \sum_{\nu \in \Irr(N)} \frac{|G_\nu:N|^2}{|G:N|}\]
by Corollary \ref{irrcor}, and a similar formula holds for $\ccl(A \times B)$.  The result now follows from the fact that $|G_\nu:N|^2/|G:N|$ is a strictly increasing function of $|G_\nu:N|$.
\end{proof}

\begin{thmb}
Let $G$ and $N$ be as in (ii) above with $G/N$ a $\pi$-group.  Suppose that every subgroup of $N$ (including $N$ itself) is $\Conpi$.  Then $G$ is $\Conpi$.  If in addition $N$ is $\Conpi^*$, then $G$ is $\Conpi^*$.
\end{thmb}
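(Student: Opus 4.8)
The plan is to bound $\ccl(G)$ in terms of $\ccl(A)$ and $\ccl(B)$ by enumerating $\Irr(N)$ together with the sizes of the relevant inertia groups. Since $\Sigma\searrow N$ (Lemma \ref{xilem}) and $|B|=|G|_{\pi'}=|N|_{\pi'}$ (because $G/N$ is a $\pi$-group), we have $B\leq N$, so $B$ is a Hall $\pi'$-subgroup of $N$ and $N\cap A$ a Hall $\pi$-subgroup of $N$; write $n:=|G:N|=|A:N\cap A|$. By Schur--Zassenhaus there is $g\in A$ generating a complement $C$ to $N\cap A$ in $A$, and then $C\cong G/N$ (cyclic of order $n$) is also a complement to $N$ in $G$. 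Because $G/N$ is abelian and every $\nu\in\Irr(N)$ extends to $G_\nu$ (Theorem \ref{extnthm}(ii), as $(|G_\nu/N|,|N|)=1$), the computation in the proof of Lemma \ref{extnlem} gives
\[ \ccl(G)=\frac{1}{n}\sum_{\nu\in\Irr(N)}|G_\nu:N|^2,\qquad \ccl(A)=\frac{1}{n}\sum_{\alpha\in\Irr(N\cap A)}|A_\alpha:N\cap A|^2, \]
the second formula being the same identity applied to the pair $(A,N\cap A)$.

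Next I would rewrite each square as $|G_\nu:N|^2=|\{(x,y)\in(G/N)^2:\langle x,y\rangle\leq G_\nu/N\}|$ and interchange summations:
\[ \sum_{\nu\in\Irr(N)}|G_\nu:N|^2=\sum_{(x,y)\in(G/N)^2}\bigl|\{\nu\in\Irr(N):\langle x,y\rangle\leq G_\nu/N\}\bigr|. \]
Since $G/N$ is cyclic, $\langle x,y\rangle$ is its unique subgroup of some order $m=m(x,y)$ dividing $n$, generated by the image of $h:=g^{n/m}$; the inner count is the number of characters in $\Irr(N)$ fixed by the automorphism induced by $h$, which by the Glauberman correspondence (Proposition \ref{glaub}) equals $\ccl(C_N(h))$. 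The same manipulation for $A$, matching the copy of $(G/N)^2$ with $(A/(N\cap A))^2$ via $C$, shows that $\ccl(G)\leq\ccl(A)\ccl(B)$ follows from the termwise bound
\[ \ccl(C_N(h))\leq\ccl(C_{N\cap A}(h))\,\ccl(B)\qquad\text{for every }m\mid n,\ h=g^{n/m}. \]

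To prove this termwise bound, observe that $\langle h\rangle$ is a $\pi$-group acting coprimely on the soluble group $N$, and $N\cap A$ is an $h$-invariant Hall $\pi$-subgroup of $N$; the standard theory of coprime action then shows $C_{N\cap A}(h)$ is a Hall $\pi$-subgroup of $C_N(h)$, and, taking an $h$-invariant Hall $\pi'$-subgroup $Q_0$ of $N$, that $C_{Q_0}(h)$ is a Hall $\pi'$-subgroup of $C_N(h)$. As $C_N(h)\leq N$ it is $\Conpi$ by hypothesis, so $\ccl(C_N(h))\leq\ccl(C_{N\cap A}(h))\,\ccl(C_{Q_0}(h))$. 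Finally $Q_0$, being a Hall $\pi'$-subgroup of $N$, is isomorphic to $B$, and $h$ is a $\pi$-element normalising $Q_0$; applying Lemma \ref{alhdg} to $Q_0\langle h\rangle$, in which $Q_0$ is the normal Hall subgroup and $h$ lies in the complement $\langle h\rangle$, gives $\ccl(C_{Q_0}(h))\leq\ccl(Q_0)=\ccl(B)$, with equality only if $h$ centralises $Q_0$. This proves the termwise bound, hence $G$ is $\Conpi$. For the second assertion, suppose $\ccl(G)=\ccl(A)\ccl(B)$; then the termwise bound is an equality for every $m\mid n$. Taking $m=1$ gives $\ccl(N)=\ccl(N\cap A)\ccl(B)$, so if $N$ is $\Conpi^*$ then $N\cong(N\cap A)\times B$. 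Taking $m=n$, so $h=g$, forces equality throughout the chain above, hence $g$, and so $C=\langle g\rangle$, centralises $Q_0$; since $N\cong(N\cap A)\times B$ has $B$ as its only Hall $\pi'$-subgroup, $Q_0=B$. Thus $B$ is centralised by $N\cap A$ (inside $N$) and by $C$, hence by $G=NC$, so $B\leq Z(G)$; now $A=(N\cap A)C$ is normalised by $N=(N\cap A)B$ and by $C$, hence $A\lhd G$, and since $A\cap B=1$ and $AB=G$ we conclude $G=A\times B$. Therefore $G$ is $\Conpi^*$.

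\textbf{Main obstacle.}
The crux is the inequality $\ccl(C_{Q_0}(h))\leq\ccl(B)$: the number of conjugacy classes is \emph{not} monotone under passage to subgroups, so one cannot argue directly from $C_{Q_0}(h)\leq Q_0\cong B$ but must instead exploit that $Q_0$ is a normal Hall subgroup of the coprime extension $Q_0\langle h\rangle$, via Lemma \ref{alhdg}. A further point needing care is that $C_{N\cap A}(h)$ is genuinely a Hall $\pi$-subgroup of $C_N(h)$, not merely some $\pi$-subgroup of it, which is precisely what lets $\ccl(C_{N\cap A}(h))$ replace $\ccl$ of the $\pi$-part of $C_N(h)$ in the $\Conpi$ inequality.
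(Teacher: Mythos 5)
Your proof is correct, and its core coincides with the paper's: both arguments use the coprimality of $|N|$ and $|G/N|$ to extend every character of $N$ to its inertia group, reduce the theorem to comparing the number of $\langle h\rangle$-invariant characters of $N$ with the corresponding count for $(N\cap A)\times B$ as $h$ ranges over a cyclic complement, convert these counts into class numbers of fixed-point subgroups via the Glauberman correspondence (Proposition \ref{glaub}), and then feed the $\Conpi$ hypothesis on the subgroup $C_N(h)\leq N$ into a final coprime-action bound that replaces the $\pi'$-factor by $\ccl(B)$. The difference is organisational. The paper packages the reduction as the $\iota_k$ inequalities of Lemma \ref{extnlem}, disposes of all $k<|G/N|$ by induction on $|G|$, and only argues directly for the top term; your double count over pairs $(x,y)\in(G/N)^2$ treats every divisor $m$ of $|G/N|$ uniformly and removes the induction, which is arguably cleaner. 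The price is that you must invoke two standard coprime-action facts — that $C_{N\cap A}(h)$ is a Hall $\pi$-subgroup (not merely a $\pi$-subgroup) of $C_N(h)$, and that an $h$-invariant Hall $\pi'$-subgroup $Q_0$ of $N$ exists with $C_{Q_0}(h)$ Hall $\pi'$ in $C_N(h)$. The paper sidesteps these by choosing the complement $T\leq A$ so that it normalises $B$ (via the Sylow system), whence $L=C_N(T)$ splits as $(L\cap A)(L\cap B)$ by an elementary elementwise argument and $\ccl(L\cap B)=|\Irr_T(B)|\leq\ccl(B)$ comes from a second application of Glauberman rather than from Lemma \ref{alhdg}; your use of Lemma \ref{alhdg} is an equivalent substitute, including its equality clause. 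Your treatment of the equality case matches the paper's in substance: the $m=1$ term forces $N=(N\cap A)\times B$ via $\Conpi^*$ of $N$, and the top term forces the complement to centralise the Hall $\pi'$-subgroup, giving $G=A\times B$.
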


\begin{proof}
We shall prove that the inequalities given in the previous lemma hold in this case, and then consider conditions for equality.\\
 
Let $|G/N|=m$.  By induction on $|G|$, we may assume 
\[ \iota_k(N;G) \leq \iota_k(N^*;G^*) \quad \forall 0 \leq k < m \]
so we may restrict our attention to the inequality in the case $k=m$.  Since every $\nu \in \Irr(N)$ extends to $G_\nu$, but no further, and similarly for $G^*$, the following holds:
\[ \iota_m(N;G) = |\Irr_G(N)| \; ; \quad \iota_m(N^*;G^*) = |\Irr_{G^*}(N^*)| \]
By Hall's Theorem, $B \leq N$ and we can take $T \leq A$ such that $G$ is the semidirect product of $N$ by $T$ and $T$ normalises $B$.  By assumption $(|N|,|T|)=1$, and by induction on $|G|$ we may assume $T$ acts faithfully on $N$ and regard it as a subgroup of $\Aut(N)$.  Now applying Proposition \ref{glaub}:
\[ |\Irr_G(N)| = |\Irr_T(N)| = \ccl(L) \]
\[ |\Irr_{G^*}(N^*)| = |\Irr_T(N^*)| = \ccl(Y) \]
where $L$ is the centraliser in $N$ of $T$, and $Y = (L \cap A)\times B$.  Consider $t \in T$ acting on $x \in A$ and $y \in B$ by conjugation.  If $(xy)^t = xy$ then $x^t = x$ and $y^t = y$, since $T$ normalises both $A$ and $B$.  In other words, $L = (L \cap A)(L \cap B)$.  Since $L \leq N$, by assumption
\[ \ccl(L) \leq \ccl(L \cap A )\ccl(L \cap B) \]
Now $L \cap B = B_T$, so $\ccl(L \cap B) = |\Irr_T(B)| \leq |\Irr(B)|$ by Proposition \ref{glaub}.  Hence
\[\iota_m(N;G) = \ccl(L) \leq \ccl(L \cap A )\ccl(B) = \ccl(Y) = \iota_m(N^*;G^*) \]
This proves that $G$ is $\Conpi$.\\

Suppose now $\ccl(G) = \ccl(A)\ccl(B)$, and that $N$ is $\Conpi^*$.  Then by Lemma \ref{extnlem}
\[\ccl(N) = \iota_1(N;G) = \iota_1(N^*;G^*) = \ccl(N \cap A )\ccl(B) \]
which means $(N \cap A)$ centralises $B$.  We must also have $|\Irr_T(B)| = |\Irr(B)|$.  This means that $|\Irr_{\{t\}}(B)| = |\Irr(B)|$ for every $t \in T$, so by Proposition \ref{glaub}, $B_t = B$.  In other words, every element of $T$ centralises $B$.  Hence $A=(N \cap A)T$ centralises $B$, so $G \cong G^*$ as required.
\end{proof}

\subsection{Theorems C and D}

Corollary \ref{irrcor} gives a potential approach to the Conjecture for finite soluble groups in general.  Consider an abelian normal subgroup $N$, with $N \leq A$ say, and use the formula in Corollary \ref{irrcor} to compare $\ccl(G)$ and $\ccl(A \times B)$.  To prove $G$ is $\Conpi$ it suffices to show, for each $\nu \in \Irr(N)$:

\[ \frac{|\Irr(G_\nu|\nu)|}{|G:G_\nu|} \leq \frac{|\Irr(A_\nu|\nu)|\ccl(B)}{|A:A_\nu|}\]\\

There are however two difficulties here.  The first is that $A_\nu$ may not be a Hall $\pi$-subgroup of $G_\nu$.  The second is that there is in general no way of calculating $|\Irr(G_\nu|\nu)|$ exactly in a way that transfers easily to a calculation of $|\Irr(A_\nu|\nu)|$.\\

The first difficulty does not arise if we insist $G$ has balanced action on $N$, as defined in the Introduction.  In such a situation $|G:G_\nu| = |A:A_\nu||B|/|S_\nu|$, where $S$ is a Hall $\pi'$-subgroup of $G$ chosen to that $S_\nu$ is a Hall $\pi'$-subgroup of $G_\nu$, and the inequality reduces to the following:
\[|\Irr(G_\nu|\nu)||S_\nu| \leq |\Irr(A_\nu|\nu)|\ccl(B)|B|\]

Certainly $\ccl(B)|B|/|S_\nu| \geq \ccl(S_\nu)$, by Corollary \ref{sgpccl}.  So in fact, it suffices to show the following:
\begin{equation}
\label{baineq}
|\Irr(G_\nu|\nu)| \leq |\Irr(A_\nu|\nu)|\ccl(S_\nu)
\end{equation}

The second difficulty may be less serious, as a result in \cite{2} implies that $|\Irr(G_\nu|\nu)| \leq \ccl(G_\nu/N)$, and similarly for $|\Irr(A_\nu|\nu)|$, and the same paper also provides a method for calculating $|\Irr(G_\nu|\nu)|$ exactly.  However, the method described in $\cite{2}$ appears difficult to carry out in such a way that would allow easy comparison between $|\Irr(G_\nu|\nu)|$ and $|\Irr(A_\nu|\nu)|$ in a general situation; if such a comparison could be achieved, it could lead to a generalisation of Theorem C.\\
  
In any case, the difficulty is overcome if $\nu$ extends to $G_\nu$, as in this case $|\Irr(G_\nu|\nu)|=\ccl(G_\nu/N)$ and similarly for $A$.  It can also be overcome by considering the group $G_\nu/\krn(\nu)$, together with the assumption that this group is $\SConpi$, as defined in the introduction.\\

As a result we obtain the following:

\begin{thmc}
Let $G$ be a finite soluble group, and $N$ an elementary abelian normal subgroup of $G$.  Suppose $G$ has balanced action on $\Irr(N)$, and that $G_{\nu}/N$ is $\Conpi$ for every $\nu \in \Irr(N)$.  Suppose in addition that at least one of the following holds:\\
 
(i)Every $\nu \in \Irr(N)$ extends to $G$. (In particular, this will be the case if $N$ is complemented in $G$, by Theorem \ref{extnthm}.)\\

(ii)For every $\nu \in \Irr(N)$, the group $H=G_\nu/\krn(\nu)$ is $\SConpi$.\\

Then $G$ is $\Conpi$.  If $G/N$ is $\Conpi^*$, then $G$ is $\Conpi^*$.\\
\end{thmc}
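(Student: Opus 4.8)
The plan is to run the argument foreshadowed in the discussion before the statement, pushing the reduction one step further. First I would assume without loss that $N \le A$ (legitimate since an elementary abelian group is a $p$-group, hence a $\pi$- or $\pi'$-group, and $\Conpi$ and $\Conpi^*$ coincide with their $\pi'$-analogues). By Corollary \ref{irrcor} applied to $G$ and to $A$, write $\ccl(G) = \sum_{\nu \in \Irr(N)} |\Irr(G_\nu|\nu)|/|G:G_\nu|$ and $\ccl(A)\ccl(B) = \sum_{\nu \in \Irr(N)} |\Irr(A_\nu|\nu)|\ccl(B)/|A:A_\nu|$, and aim to dominate the first sum by the second term by term. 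Using balanced action exactly as in the text — so $|G:G_\nu| = |A:A_\nu|\,|B|/|S_\nu|$ for a Hall $\pi'$-subgroup $S_\nu$ of $G_\nu$ — and Corollary \ref{sgpccl} to replace $\ccl(B)|B|/|S_\nu|$ by $\ccl(S_\nu)$, each termwise inequality reduces to Eq.\ \eqref{baineq}, i.e.\ $|\Irr(G_\nu|\nu)| \le |\Irr(A_\nu|\nu)|\ccl(S_\nu)$. So the whole theorem comes down to \eqref{baineq}, and it is convenient to prove it summed over the $\nu$ sharing a given kernel rather than one at a time.

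The key structural point is that if $\krn(\nu) = K$, then $N/K$ is cyclic of prime order $p \in \pi$, the stabiliser $G_\nu$ acts trivially on $N/K$ (it fixes a faithful linear character of a cyclic group of prime order), hence $G_\nu$ depends only on $K$ — call it $G_K$ — and likewise $A_\nu = A_K$. Setting $H = G_K/K$, the subgroup $\bar N = N/K$ is central of order $p$ in $H$ with $H/\bar N \cong G_K/N$, the image $\bar A$ of $A_K$ is a Hall $\pi$-subgroup of $H$ with $\bar A/\bar N \cong A_K/N$, and $S_K$ maps isomorphically onto a Hall $\pi'$-subgroup of $H$. Partitioning $\Irr(H)$ into $\Irr(H/\bar N)$ and the $p-1$ fibres over the nontrivial characters of $\bar N$, and identifying those fibres with the $\Irr(G_K|\nu)$ for the $p-1$ characters $\nu$ with $\krn(\nu) = K$ (by Clifford's theorem together with the triviality of the $G_K$-action on $N/K$), I get $\sum_{\krn(\nu)=K}|\Irr(G_\nu|\nu)| = \ccl(H) - \ccl(H/\bar N)$, and similarly for $A$. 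Multiplying the sum over this $K$ of \eqref{baineq} through by $|G:G_K| = |A:A_K|\,|B|/|S_K|$ and invoking Corollary \ref{sgpccl} reduces it to
\[ \ccl(H) - \ccl(H/\bar N) \;\le\; \bigl(\ccl(\bar A) - \ccl(\bar A/\bar N)\bigr)\ccl(S_K). \]
Under hypothesis (ii) this is exactly clause (ii) of the definition of $\SConpi$ for $H = G_\nu/\krn(\nu)$ applied to its central subgroup $\bar N$ of order $p \in \pi$. Under hypothesis (i), each such $\nu$ extends to $G_\nu$ (restrict the given extension; or, if $N$ is complemented in $G$, it is complemented in $G_\nu$ and Theorem \ref{extnthm}(iii) applies), so $|\Irr(G_\nu|\nu)| = \ccl(G_\nu/N)$ and $|\Irr(A_\nu|\nu)| = \ccl(A_\nu/N)$, and \eqref{baineq} for each such $\nu$ is just the assertion that $G_\nu/N$ is $\Conpi$ with Hall subgroups $A_\nu/N$ and a copy of $S_\nu$. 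Adding the (immediate) term $\nu = 1_N$, which uses only that $G/N$ is $\Conpi$, gives $\ccl(G) \le \ccl(A)\ccl(B)$.

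For the equality clause I would assume $G/N$ is $\Conpi^*$ and $\ccl(G) = \ccl(A)\ccl(B)$, forcing equality throughout. Equality in the $\nu = 1_N$ term gives $\ccl(G/N) = \ccl(A/N)\ccl(B)$, so $G/N \cong (A/N) \times (BN/N)$ and $[A,B] \le N$. Equality in the $K$-terms forces, via the strict inequality in Corollary \ref{sgpccl}, that $S_K = B$ for every $K$; hence $B$ stabilises every $\nu \in \Irr(N)$ and so centralises $N$. A coprimality argument then finishes: for $x \in A$ and $b \in B$ the element $n = [x,b]$ lies in $N$ and is fixed by $b$, so $x^{b^k} = x\,n^k$ for all $k$ and $k = |b|$ gives $n^{|b|} = 1$; as $|n|$ is a $\pi$-number and $|b|$ a $\pi'$-number, $n = 1$, whence $[A,B] = 1$ and $G \cong A \times B$.

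I expect the main obstacle to be hypothesis (ii): unlike in case (i) there is no handle on $|\Irr(G_\nu|\nu)|$ for a single $\nu$, and the device of summing over the $p-1$ characters with a common kernel — so that the bound needed is literally clause (ii) of the definition of $\SConpi$ — is the crux. The supporting bookkeeping (that $\bar N$ is central of order $p \in \pi$ in $H$, that $\bar A$ is its Hall $\pi$-subgroup, and that the balanced-action identity $|G:G_K| = |A:A_K|\,|B|/|S_K|$ lets Corollary \ref{sgpccl} bridge $\ccl(S_K)$ and $\ccl(B)$) must be carried out with care, and in the equality analysis the step extracting $S_K = B$ genuinely needs the strict part of Corollary \ref{sgpccl}.
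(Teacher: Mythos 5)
Your proposal is correct and follows essentially the same route as the paper: the reduction via balanced action and Corollary \ref{sgpccl} to the termwise inequality \eqref{baineq}, and the identification of \eqref{baineq} (summed over the $p-1$ characters with kernel $K$, rather than via the paper's Galois-conjugacy observation that all such $\nu$ give equal counts) with clause (ii) of $\SConpi$ for $H=G_\nu/\krn(\nu)$, is exactly the published argument. The only real divergence is the $\Conpi^*$ clause, where the paper simply deduces $A \lhd G$ from equality at the $G$-invariant trivial character and quotes Theorem A, whereas your coprime-commutator finish also works but should note that equality only forces $S_K$ to be a full Hall $\pi'$-subgroup of $G$, conjugate to $B$ by an element of $N$ (using $[A,B]\le N$ and Schur--Zassenhaus in $BN$), not literally equal to $B$.
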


\begin{proof}
Assume case (i), and assume $N \leq A$.  From the above discussion, we see that to show $G$ is $\Conpi$, it suffices to show, for each $\nu \in \Irr(N)$:
\[ \ccl(G_\nu/N) \leq \ccl(A_\nu/N)\ccl(S_\nu) \]

This inequality follows immediately from the hypothesis that $G_\nu/N$ is $\Conpi$.\\

Now assume case (ii).  Fix $\nu \not=1_N$; by balanced action, $G_\nu$ factorises as $A_\nu S_\nu$.  If $N$ is cyclic, the non-trivial irreducible characters of $\nu$ are transitively permuted by a Galois automorphism.  This implies $|\Irr(G_\nu|\nu)|$ and $|\Irr(A_\nu|\nu)|$ do not depend on the choice of $\nu \in \Irr(N) \setminus \{1_N\}$, so
\begin{equation}
\label{cyceq}
\ccl(G_\nu) = |\Irr(G_\nu|1_N)| + (p-1)|\Irr(G_\nu|\nu)| = \ccl(G_\nu/N) + (p-1)|\Irr(G_\nu|\nu)|
\end{equation}
and similarly for $A$.\\

More generally, given $\chi \in \Irr(G_\nu|\nu)$, then $\krn(\nu) \leq \krn(\chi)$, since by Clifford's theorem, $\chi_N$ is a multiple of $\nu$.  A similar situation holds for $\Irr(A_\nu|\nu)$.  So $|\Irr(G_\nu|\nu)|=|\Irr(G_\nu/\krn(\nu))|\nu')|$, where $\nu'$ is the character of $N/\krn(\nu)$ corresponding to $\nu$, and $N/\krn(\nu)$ is necessarily cyclic since $\nu$ is an irreducible linear character.  We now observe, via Eq. \eqref{cyceq} applied to $H$ and $A_\nu/\krn(\nu)$:
\[ \ccl(H) - \ccl(H/K) = (p-1)|\Irr(G_\nu|\nu)| \]
\[ \ccl(U) - \ccl(U/K) = (p-1)|\Irr(A_\nu|\nu)| \]
where $U$ is a Hall $\pi$-subgroup of $H$, and $K=N/\krn(\nu) \leq Z(H)$.  From the fact that $H$ is $\SConpi$, we obtain
\[ |\Irr(G_\nu|\nu)| \leq |\Irr(A_\nu|\nu)|\ccl(V) \]
where $V$ is a Hall $\pi'$-subgroup of $H$.  Since $V \cong S_\nu$, this is equivalent to inequality \eqref{baineq}.\\

Now suppose $\nu=1$.  The inequality \eqref{baineq} becomes:

\[ |\Irr(G|1_N)| \leq |\Irr(A|1_N)|\ccl(B) \]
which is equivalent to
\[ \ccl(G/N) \leq \ccl(A/N)\ccl(B) \]
By assumption, $G/N$ is $\Conpi$ so this inequality is satisfied.  This concludes the proof that $G$ is $\Conpi$.\\

Suppose $\ccl(G) = \ccl(A)\ccl(B)$, in either case (i) or case (ii).  Then we must have equality in \eqref{baineq} for each $\nu \in \Irr(N)$, and furthermore, by our use of Corollary \ref{sgpccl} in obtaining \eqref{baineq}, $S_\nu$ must be isomorphic to $B$, rather than to a proper subgroup of $B$; in other words $S_\nu$ is a Hall $\pi'$-subgroup of $G$.  In the case where $\nu$ is a $G$-invariant character (such as the trivial character), equality in \eqref{baineq} means $\ccl(G/N) = \ccl(A/N)\ccl(BN/N)$.  If $G/N$ is $\Conpi^*$, for this to happen $BN/N$ must centralise $A/N$ in $G/N$, which implies $A \lhd G$.  Hence $G$ is $\Conpi^*$, by Theorem A.\end{proof}

Case (ii) above depends on the condition $\SConpi$, which is stronger than $\Conpi$.  The following gives some circumstances under which $\SConpi$ can be obtained.

\begin{thmd}
Let $G$ be a finite soluble group.  Suppose that for some central subgroup $M$ of prime order, every subgroup of $G/M$ is $\Conpi$ and $M \cap G' = 1$.  Then $G$ is $\Conpi$.  If every central subgroup of $G$ of prime order satisfies these conditions, then $G$ is $\SConpi$.  If $G/M$ is $\Conpi^*$ then $G$ is $\Conpi^*$.
\end{thmd}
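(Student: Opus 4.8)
The plan is to use the hypothesis $M\cap G'=1$ to force $\ccl$ to be exactly multiplicative along the central extension $M\le G\to G/M$, and then pull the $\Conpi$-property back from $G/M$ to $G$.

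First I would reduce to the case $M\le A$: since $M$ has prime order it is a $\{q\}$-group for some prime $q$, and being central it is normal, hence lies in $\cO_\pi(G)\le A$ if $q\in\pi$ and in $\cO_{\pi'}(G)\le B$ otherwise; the two possibilities are symmetric, so assume $M\le A$ and write $q=|M|$. The crucial step is then to prove the identity $\ccl(G)=q\,\ccl(G/M)$. For this, observe that because $M\cap G'=1$ the image of $M$ in the abelian group $G/G'$ is isomorphic to $M$, and every character of a subgroup of a finite abelian group extends to the whole group; hence every $\mu\in\Irr(M)$ inflates from a linear character of $G/G'$ to a linear character of $G$ whose restriction to $M$ is $\mu$. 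By Gallagher's theorem this gives $|\Irr(G|\mu)|=\ccl(G/M)$ for each of the $q$ characters $\mu\in\Irr(M)$, and since $M$ is central $\Irr(G)$ is the disjoint union of the sets $\Irr(G|\mu)$, so $\ccl(G)=q\,\ccl(G/M)$. Running the identical argument inside $A$ (legitimate since $M\le Z(A)$ and $M\cap A'\le M\cap G'=1$) yields $\ccl(A)=q\,\ccl(A/M)$.

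Granting these two identities the rest is bookkeeping with Hall's theorem. Here $A/M=AM/M$ is a Hall $\pi$-subgroup of $G/M$ and $BM/M\cong B$ is a Hall $\pi'$-subgroup, so the assumption that $G/M$ is $\Conpi$ yields $\ccl(G)=q\,\ccl(G/M)\le q\,\ccl(A/M)\ccl(B)=\ccl(A)\ccl(B)$, proving $G$ is $\Conpi$. If moreover $G/M$ is $\Conpi^*$ and equality holds throughout, then $\ccl(G/M)=\ccl(A/M)\ccl(BM/M)$ forces $G/M$ to be the internal direct product of these two Hall subgroups, i.e.\ $[A,B]\le M$; together with $[A,B]\le G'$ and $M\cap G'=1$ this gives $[A,B]=1$, hence $G\cong A\times B$. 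For the $\SConpi$ conclusion under the stronger hypothesis, I would simply repeat the whole argument with an arbitrary central subgroup $N$ of prime order $p$ in place of $M$: the hypothesis supplies $N\cap G'=1$ and the $\Conpi$-property of $G/N$, so (taking $p\in\pi$, whence $N\le A$) one gets $\ccl(G)=p\,\ccl(G/N)$, $\ccl(A)=p\,\ccl(A/N)$ and $\ccl(G/N)\le\ccl(A/N)\ccl(B)$, and therefore $\ccl(G)-\ccl(G/N)=(p-1)\ccl(G/N)\le(p-1)\ccl(A/N)\ccl(B)=(\ccl(A)-\ccl(A/N))\ccl(B)$, which is condition (ii); condition (iii) is the mirror image with the roles of $A$ and $B$ exchanged.

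There is no serious obstacle in this argument; essentially all of the content sits in the extendibility of the characters of $M$ and the resulting identity $\ccl(G)=q\,\ccl(G/M)$, which is precisely where the hypothesis $M\cap G'=1$ is indispensable — without it one obtains only the inequality $\ccl(G)\le q\,\ccl(G/M)$, and the companion equality $\ccl(A)=q\,\ccl(A/M)$ likewise relies on $M\cap A'\le M\cap G'=1$, so the equality statements underpinning $\Conpi^*$ and $\SConpi$ collapse. The only point requiring mild care is keeping track of which subgroup plays the part of the Hall $\pi$-subgroup in each quotient, and this is entirely routine given Hall's theorem.
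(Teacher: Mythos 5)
Your proposal is correct and follows essentially the same route as the paper: both arguments hinge on the observation that $M\cap G'=1$ lets every $\mu\in\Irr(M)$ extend to a linear character of $G$ (and of $A$), giving $|\Irr(G|\mu)|=\ccl(G/M)$ and hence the exact multiplicativity that reduces everything to the $\Conpi$/$\Conpi^*$ property of $G/M$. The only cosmetic difference is that the paper delegates the resulting bookkeeping to case (i) of Theorem C, whereas you carry it out directly via Gallagher's theorem.
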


\begin{proof}
Without loss of generality we may assume $M \leq A$.  As $M$ is central, $G$ has balanced action on $\Irr(M)$.  Since $M \cap G' = 1$ and $G'$ is the intersection of the kernels of all linear characters of $G$, there is a linear character $\chi$ of $G$ which extends some $\mu \in \Irr(M) \setminus \{1_M\}$.  The same applies to all $\mu \in \Irr(M)$, since the non-trivial characters are all equivalent via a Galois automorphism, and $1_M$ extends to $1_G$.  We are now in the situation of case (i) of Theorem C, and so $G$ is $\Conpi$, and if $G/M$ is $\Conpi^*$ then $G$ is $\Conpi^*$.\\

Now assume every central subgroup of prime order satisfies the stated conditions.  To show $G$ is $\SConpi$, it suffices to consider a given $M \leq G$, assume $M \leq A$, and show
\[ \ccl(G) - \ccl(G/M) \leq (\ccl(A) - \ccl(A/M))\ccl(B) \]
which is equivalent by Eq. \eqref{cyceq} to showing the following, for any $\mu \in \Irr(M) \setminus \{1_M\}$:
\[ \Irr(G|\mu) \leq \Irr(A|\mu)\ccl(B) \]
But since $\mu$ extends to $G$ (and hence also to $A$), the above inequality is equivalent to the following:
\[ \ccl(G/M) \leq \ccl(A/M)\ccl(B) \]
This inequality follows immediately from the fact that $G/M$ is $\Conpi$.\end{proof}

\subsection{Theorem E}

Recall the bijection $*$ defined in the discussion for Theorem B, where it was noted that if $N \unlhd G$, then $N^* \unlhd G^*$.  However, in general $*$ only defines an injection from normal subgroups of $G$ to normal subgroups of $G^*$.  We associate to each $K \unlhd G^*$ a normal subgroup of $G$ as follows: define the \emph{$G$-level} $\Lev (K)$ to be the join of all $N$ such that $N \unlhd G$ and $N^* \leq K$.  $\Lev (K)$ is thus the largest normal subgroup of $L$ of $G$ such that $L^* \leq K$.\\

We may associate to any complex character $\chi$ of $G$ a normal subgroup of $G$, namely $\krn(\chi)$.  Given a complex character $\phi$ of $G^*$, we may also associate a normal subgroup of $G$ to it, namely $\Lev(\krn(\phi))$.  This gives a way of comparing irreducible complex characters of $G$ to those of $G^*$.  Given a set $X$ of normal subgroups of $G$, let $\Irr(G|X)$ denote those $\chi \in \Irr(G)$ for which $\krn(\chi) \in X$, and $\Irr^G(G^*|X)$ denote those $\phi \in \Irr(G^*)$ for which $\Lev(\krn(\phi)) \in X$.  If $X$ is a singleton $\{N\}$, write $\Irr(G|N)$ and $\Irr^G(G^*|N)$.\\

Using $G$-levels, we can potentially reduce the question of whether a finite soluble group is $\SConpi$ (and hence $\Conpi$) to a question about those quotients which are irreducible complex linear groups:

\begin{thme}Let $\mathcal{K}$ be a family of finite soluble groups closed under quotients.  Suppose that $|\Irr(G|1)| \leq |\Irr^G(G^*|1)|$ for every group $G \in \mathcal{K}$, where $*$ indicates the bijection given by $(ab)^*=(a,b)$ from $G$ to a direct product of a Hall $\pi$-subgroup $A$ and a Hall $\pi'$-subgroup $B$ of $G$.\\

Then every group in $\mathcal{K}$ is $\SConpi$.\end{thme}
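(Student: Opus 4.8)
The plan is to establish, for every $G \in \mathcal{K}$ and every normal subgroup $M$ of $G$, the single inequality $|\Irr(G|M)| \leq |\Irr^G(G^*|M)|$, and to deduce both $\Conpi$ and the extra conditions in the definition of $\SConpi$ from it by summing over suitable families of $M$. The enabling identities are two partitions of character sets by kernel: partitioning $\Irr(G)$ according to $\krn$ gives $\ccl(G) = \sum_{M \unlhd G}|\Irr(G|M)|$, while partitioning $\Irr(G^*) = \Irr(A\times B)$ according to the $G$-level of $\krn$ gives $\ccl(A)\ccl(B) = |\Irr(G^*)| = \sum_{M \unlhd G}|\Irr^G(G^*|M)|$. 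Summing the displayed inequality over all $M$ then yields $\ccl(G) \leq \ccl(A)\ccl(B)$, so $G$ is $\Conpi$.

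For the displayed inequality itself I would argue as follows. When $M = 1$ it is precisely the hypothesis of the theorem. When $M \neq 1$, inflation of characters identifies $\Irr(G|M)$ with the set $\Irr(G/M|1)$ of faithful irreducible characters of $G/M$, so $|\Irr(G|M)| = |\Irr(G/M|1)|$; since $\mathcal{K}$ is closed under quotients we have $G/M \in \mathcal{K}$, and the hypothesis applied to $G/M$ gives $|\Irr(G/M|1)| \leq |\Irr^{G/M}((G/M)^*|1)|$. It then remains to produce an injection $\Irr^{G/M}((G/M)^*|1) \hookrightarrow \Irr^G(G^*|M)$. Since $M$ is normal it is split, so $M^* \unlhd G^*$ and $G^*/M^* \cong (G/M)^*$ compatibly with the Hall decompositions, as in the discussion preceding Lemma \ref{extnlem}; inflation along this isomorphism is injective on irreducible characters and carries a $\bar\phi$ with trivial $G/M$-level of kernel to some $\phi$ with $M^* \leq \krn(\phi)$. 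The point to verify is that $\Lev(\krn(\phi)) = M$: the inclusion $M \leq \Lev(\krn(\phi))$ is immediate, and if $L = \Lev(\krn(\phi))$ then $LM \unlhd G$, $LM \supseteq M$, and $(LM)^* = L^* M^* \leq \krn(\phi)$, so $LM/M$ is a normal subgroup of $G/M$ whose $*$-image lies in $\krn(\bar\phi)$, forcing $LM = M$ and hence $L \leq M$.

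For the two remaining conditions in the definition of $\SConpi$, let $N \unlhd G$ have prime order $p$; then $N \leq \cO_\pi(G) \leq A$ when $p \in \pi$ and $N \leq \cO_{\pi'}(G) \leq B$ when $p \in \pi'$. Counting by kernel once more, $\ccl(G/N) = \sum_{M \unlhd G,\, M \supseteq N}|\Irr(G|M)|$, so (as $N$ has prime order) $\ccl(G) - \ccl(G/N) = \sum_{M \cap N = 1}|\Irr(G|M)|$. The same computation on the $G^*$ side, using the inflation identification above (which over the full range $M \supseteq N$ is a bijection), gives $\ccl(A/N)\ccl(B) = \sum_{M \supseteq N}|\Irr^G(G^*|M)|$ when $p \in \pi$, whence $(\ccl(A) - \ccl(A/N))\ccl(B) = \sum_{M \cap N = 1}|\Irr^G(G^*|M)|$, and symmetrically $(\ccl(B) - \ccl(B/N))\ccl(A) = \sum_{M \cap N = 1}|\Irr^G(G^*|M)|$ when $p \in \pi'$. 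Comparing these sums termwise against $|\Irr(G|M)| \leq |\Irr^G(G^*|M)|$ yields conditions (ii) and (iii), so $G$ is $\SConpi$.

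I expect the main obstacle to be bookkeeping rather than a hard idea: the work lies in checking that $*$, the operation $M \mapsto M^*$ on normal subgroups, and the $G$-level operator all transfer correctly to the quotient $G/M$, so that inflation genuinely sends $\Irr^{G/M}((G/M)^*|1)$ into $\Irr^G(G^*|M)$ with the $G$-level of the kernel coming out exactly $M$. With that compatibility settled, the rest is just the two kernel-counting identities together with one application of the theorem's hypothesis to each proper quotient of $G$.
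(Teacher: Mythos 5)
Your proposal is correct and takes essentially the same route as the paper: the termwise inequality $|\Irr(G|M)| \leq |\Irr^G(G^*|M)|$ obtained by applying the hypothesis to the quotient $G/M$, then summing over all normal subgroups to get $\Conpi$ and over the normal subgroups not containing the given prime-order normal subgroup to get the remaining $\SConpi$ conditions. The only difference is that you spell out the verification that inflation identifies $\Irr^{G/M}((G/M)^*|1)$ with $\Irr^G(G^*|M)$ (i.e.\ that the $G$-level of the inflated kernel is exactly $M$), a step the paper asserts as an equality without proof.
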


\begin{proof}
Let $G \in \mathcal{K}$.  For every $N \unlhd G$, we have
\[ |\Irr(G|N)| = |\Irr(G/N|1)| \leq |\Irr^{G/N}(G^*/N^*|1)| = |\Irr^G(G^*|N)| \]
For every set $X$ of normal subgroups of $G$, we have
\[ |\Irr(G|X)| = \sum_{N \in X}|\Irr(G|N)| \]
and similarly
\[ |\Irr^G(G^*|X)| = \sum_{N \in X}|\Irr^G(G^*|N)| \]
from which we conclude that $|\Irr(G|X)| \leq |\Irr^G(G^*|X)|$ for any set $X$.  If $X$ is the set of all normal subgroups of $G$, this proves that $G$ is $\Conpi$.\\

Now suppose $M$ is a central subgroup of $G$ of prime order.  We may assume $M \leq A$.  We must show
\[ \ccl(G) - \ccl(G/M) \leq (\ccl(A) - \ccl(A/M))\ccl(B) \]
which follows from
\[ |\Irr(G|X)| \leq |\Irr^G(G^*|X)| \]
where $X$ is the set of normal subgroups of $G$ not containing $M$.  So $G$ is $\SConpi$.
\end{proof}

It is not known in general which families of groups have the required properties.  Indeed, it is not known whether there are any finite soluble groups which violate the conditions of Theorem E.  However, the following proposition suggests it may be helpful to use character degrees when trying to obtain bounds on the number of faithful characters.\\

\begin{prop}\label{sumchisq}
 Let $X$ be a set of normal subgroups of $G$.  Then
 
\[ \sum_{\chi \in \Irr(G|X)} \chi(1)^2 = \sum_{\phi \in \Irr^G(G^*|X)} \phi(1)^2 \]
\end{prop}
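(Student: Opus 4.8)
The plan is to exploit the rigidity of the degree-sum identity $\sum_{\psi \in \Irr(H)} \psi(1)^2 = |H|$. Although the cardinalities $|\Irr(G|N)|$ and $|\Irr^G(G^*|N)|$ need not agree (this is precisely what makes the hypothesis of Theorem E a genuine restriction), the corresponding sums of squares of degrees are forced to coincide, and one extracts this by downward induction over the lattice of normal subgroups of $G$.

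First I would record the basic fact about $G$-levels: for $N \unlhd G$ and $\phi \in \Irr(G^*)$, one has $N \leq \Lev(\krn(\phi))$ if and only if $N^* \leq \krn(\phi)$. The reverse implication is immediate from the definition of $\Lev$. For the forward implication, recall that $\Lev(\krn(\phi))$ is the product of those $M \unlhd G$ with $M^* \leq \krn(\phi)$, and that $(\cdot)^*$ carries products of normal subgroups of $G$ to the corresponding products in $G^*$; hence $\Lev(\krn(\phi))^* \leq \krn(\phi)$, and $N \leq \Lev(\krn(\phi))$ gives $N^* \leq \krn(\phi)$. Consequently $\{\phi \in \Irr(G^*) : \Lev(\krn(\phi)) \geq N\}$ is exactly the set of characters inflated from $\Irr(G^*/N^*)$, while $\{\chi \in \Irr(G) : \krn(\chi) \geq N\}$ is the set inflated from $\Irr(G/N)$; in both cases the inflation preserves degrees. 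Applying the degree-sum identity to the two quotients,
\[ \sum_{\substack{\chi \in \Irr(G)\\ \krn(\chi) \geq N}} \chi(1)^2 = \frac{|G|}{|N|}, \qquad \sum_{\substack{\phi \in \Irr(G^*)\\ \Lev(\krn(\phi)) \geq N}} \phi(1)^2 = \frac{|G^*|}{|N^*|}. \]
Since $|G^*| = |A||B| = |G|$ and, $N$ being normal and hence split, $|N^*| = |N \cap A|\,|N \cap B| = |N|$, both right-hand sides equal $|G|/|N|$ for every $N \unlhd G$.

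Finally, write $a(N)$ for the sum of $\chi(1)^2$ over those $\chi \in \Irr(G)$ with $\krn(\chi) = N$, and $b(N)$ for the sum of $\phi(1)^2$ over those $\phi \in \Irr(G^*)$ with $\Lev(\krn(\phi)) = N$. The previous step reads $\sum_{M \geq N} a(M) = \sum_{M \geq N} b(M)$ for every $N \unlhd G$, the sums taken over normal subgroups of $G$. As this lattice is finite, downward induction beginning at $M = G$ (equivalently, Möbius inversion) yields $a(N) = b(N)$ for all $N \unlhd G$. Summing over $N \in X$, and using that $\Irr(G|X)$ is partitioned according to the value of $\krn(\chi)$ while $\Irr^G(G^*|X)$ is partitioned according to the value of $\Lev(\krn(\phi))$, gives the proposition. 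I anticipate no real obstacle; the one step requiring care is the forward implication in the $G$-level fact, which rests on the multiplicativity of $(\cdot)^*$ on normal subgroups noted earlier in the section.
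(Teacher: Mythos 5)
Your proof is correct, but it is organised differently from the paper's. The paper argues by induction on $|G|$: it reduces to singletons $\{N\}$, and for $N\neq 1$ it identifies $\Irr(G|N)$ with $\Irr(G/N|1)$ and $\Irr^G(G^*|N)$ with $\Irr^{G/N}(G^*/N^*|1)$ and invokes minimality of a counterexample; the remaining case $N=1$ is then recovered by subtracting the sum over all non-trivial normal subgroups from $\sum_{\chi\in\Irr(G)}\chi(1)^2=|G|=|G^*|=\sum_{\phi\in\Irr(G^*)}\phi(1)^2$. You instead stay inside a single group: you prove the cumulative identity
\[
\sum_{\krn(\chi)\geq N}\chi(1)^2 \;=\; \frac{|G|}{|N|} \;=\; \sum_{\Lev(\krn(\phi))\geq N}\phi(1)^2
\]
for every $N\unlhd G$, and then perform downward induction (M\"obius inversion) on the finite lattice of normal subgroups. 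Both arguments ultimately rest on the degree-sum identity $\sum_{\psi\in\Irr(H)}\psi(1)^2=|H|$. What your route buys is that it is self-contained and makes explicit the one combinatorial fact that the paper's proof uses silently, namely that $N\leq\Lev(\krn(\phi))$ if and only if $N^*\leq\krn(\phi)$ (which you correctly derive from $(N_1N_2)^*=N_1^*N_2^*$); this is exactly what is needed to justify the paper's identification of $\Irr^G(G^*|N)$ with the corresponding set for the quotient, and to see that $|N^*|=|N|$ since normal subgroups are split. The paper's version is shorter on the page but leaves that compatibility of $G$-levels with quotients to the reader.
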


\begin{proof}
Assume $G$ is a minimal counterexample.  To prove the result for $X$, it is sufficient to consider singleton subsets $\{ N \}$ of $X$ individually.  If $N \not=1$, then 
\[ \sum_{\chi \in \Irr(G|N)} \chi(1)^2 = \sum_{\chi \in \Irr(G/N|1)} \chi(1)^2\]
\[ \sum_{\phi \in \Irr^G(G^*|N)} \phi(1)^2 = \sum_{\phi \in \Irr^G(G^*/N^*|1)} \phi(1)^2\]
By minimality of $|G|$,
\[ \sum_{\chi \in \Irr(G/N|1)} \chi(1)^2 = \sum_{\phi \in \Irr^G(G^*/N^*|1)} \phi(1)^2 \]\\

So the equation claimed in the Proposition holds when $X$ is the set of all non-trivial normal subgroups of $G$.  But then
\[ \sum_{\chi \in \Irr(G|X)} \chi(1)^2 + \sum_{\chi \in \Irr(G|1)} \chi(1)^2 = |G| = \sum_{\phi \in \Irr^G(G^*|X)} \phi(1)^2 + \sum_{\phi \in \Irr^G(G^*|1)} \phi(1)^2\]

Hence the result also holds for $X= \{1\}$.  This proves the result in all cases.\end{proof}

There are known results concerning finite soluble irreducible complex linear groups of small degree, which given the above proposition may have some bearing on this problem.  Of particular relevance here is the following:

\begin{thm}[Winter, \cite{10}]
Let $G$ be a soluble irreducible linear group of degree $n$.  Suppose that a Sylow $p$-subgroup of $G$ is not normal.  Then $n$ is divisible by a prime power $q > 1$ such that $q \equiv \epsilon$ modulo $p$, where $|\epsilon|\leq 1$.
\end{thm}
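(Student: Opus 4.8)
The plan is to induct on $|G|$, taking $V$ to be a faithful irreducible module of dimension $n$, and to reduce to the case where $G$ acts primitively on $V$. If $G$ is imprimitive, then $V \cong \mathrm{Ind}_H^G W$ for a proper subgroup $H$ and an irreducible $H$-module $W$, with $n = |G:H|\dim W$, and $G$ induces a transitive soluble permutation group $\bar G$ of degree $m := |G:H|$ on the blocks, so $m \mid n$. If $p \mid m$ we are done with $\epsilon = 0$, so suppose $p \nmid m$. One then shows that non-normality of a Sylow $p$-subgroup of $G$ is inherited either by the action of a conjugate of $H$ on $W$ --- whereupon induction on $|G|$ supplies a prime power $q^\ell \equiv \epsilon \pmod p$ dividing $\dim W$, hence dividing $n$ --- or by $\bar G$, in which case one runs a parallel induction on soluble transitive permutation groups: a primitive soluble permutation group of degree $m$ is affine, so $\bar G \leq \mathrm{AGL}(d,\ell)$ with $m = \ell^d$, and the same arithmetic (applied to $|\mathrm{GL}(d,\ell)|$, or to $\ell \mid m$ when $p = \ell$) yields a prime power divisor of $m$, and so of $n$, of the required form.

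So assume $G$ is primitive, and invoke the classical structure theory of primitive soluble linear groups. Writing $Z = Z(G)$ for the group of scalars and $F = F(G)$ for the Fitting subgroup, one has $C_G(F) = Z(F) = Z$, and $F$ is a central product $R_{r_1} \ast \cdots \ast R_{r_t}$ over distinct primes $r_i$, where each $R_{r_i}$ is of symplectic type --- extraspecial of order $r_i^{1+2m_i}$, possibly centrally amalgamated with a cyclic $r_i$-group, together with the exceptional dihedral, quaternion or semidihedral possibilities when $r_i = 2$. The faithful irreducible module of $F$ has dimension $\prod_i r_i^{m_i}$ (with each $m_i \geq 1$, the exceptional $2$-case contributing a factor $2$), and $V$ restricted to $F$ is a multiple of it, so $\prod_i r_i^{m_i} \mid n$. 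Moreover $G/F$ embeds in $\mathrm{Out}(F) = \prod_i \mathrm{Out}(R_{r_i})$; since $G$ centralises $Z(F) \leq Z$, its image in each $\mathrm{Out}(R_{r_i})$ fixes the centre of $R_{r_i}$, and for $R_{r_i}$ extraspecial this means it lies in $\mathrm{Sp}(2m_i, r_i)$ --- or in an orthogonal group $\mathrm{O}^{\pm}(2m_i, 2)$ when $r_i = 2$ --- while for the exceptional $2$-groups it lies in a $2$-group.

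Now I split on whether $O_p(G)$ is cyclic. If it is not, then $O_p(G)$ is one of the non-central factors $R_p$, whose faithful irreducible has degree divisible by $p$, so $p \mid n$ and we are done with $\epsilon = 0$. Otherwise $O_p(G)$ is cyclic, hence --- being normal and acting homogeneously on the primitive module $V$ --- central, so every $p$-subgroup of $F$ lies in $Z$; since a Sylow $p$-subgroup of $G$ is non-normal it is not contained in $Z$, so $p$ divides $|G/F|$, and therefore $p$ divides $|\mathrm{Sp}(2m,r)|$ or $|\mathrm{O}^{\pm}(2m,2)|$ for some prime $r \neq p$ with $m = m_i \geq 1$. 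It remains only to do some elementary number theory: using $|\mathrm{Sp}(2m,r)| = r^{m^2}\prod_{i=1}^m (r^{2i}-1)$, if the odd prime $p$ divides $r^{2i}-1$ for some $i \leq m$ and $d$ is the multiplicative order of $r$ modulo $p$, then $d \mid 2i$, so $d \leq 2m$; setting $\ell = d$ when $d$ is odd and $\ell = d/2$ when $d$ is even gives $1 \leq \ell \leq m$ with $r^\ell \equiv 1 \pmod p$ or $r^\ell \equiv -1 \pmod p$ respectively, and $r^\ell \mid r^m \mid n$. In the orthogonal case the factorisation of $|\mathrm{O}^{\pm}(2m,2)|$ contains the factor $2^m \mp 1$, so if $p$ divides that factor then $2^m \equiv \pm 1 \pmod p$ outright, and again $2^m \mid n$; the remaining factors of $|\mathrm{O}^{\pm}(2m,2)|$ are handled as above.

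The arithmetic of the last paragraph is the only genuinely self-contained ingredient; the rest of the difficulty is structural bookkeeping, and this is where the main obstacle lies. In the imprimitive reduction one must argue carefully that non-normality of the Sylow $p$-subgroup of $G$ really is inherited by one of the smaller groups --- the block stabiliser acting on a block, or the permutation group $\bar G$ --- rather than being absorbed into a kernel or a core (a coprime-action and Frattini-type analysis), and one must phrase the induction so that it simultaneously handles soluble transitive permutation groups. In the primitive case the delicate points are the precise appeal to the structure theorem (in particular the harmless possibility that $V$ is a proper multiple of the faithful irreducible of $F$, which only enlarges $n$) and the behaviour at $r = 2$, where the Fitting factor may be of dihedral, quaternion or semidihedral type and the relevant quotient of $G$ is orthogonal rather than symplectic; one verifies directly that the order formulas in those cases satisfy the same congruence arithmetic, and that the small cases ($m = 1$, i.e. $\mathrm{SL}(2,r)$ and $S_3$, and the exceptional $2$-groups) are consistent.
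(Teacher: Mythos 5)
The paper does not prove this statement at all: it is Winter's theorem, quoted from \cite{10} purely as an external ingredient for the corollary that follows it, so there is no internal proof to measure your attempt against. Judged on its own, your sketch follows what is essentially the standard route, and the primitive case is sound: the reduction of $F(G)$ to a central product of symplectic-type $r$-groups, the divisibility $\prod_i r_i^{m_i}\mid n$, the faithful action of the $p$-part of $G/F$ on $F/Z(F)$ when $\mathrm{O}_p(G)$ is central, and the order computation in $\mathrm{Sp}(2m,r)$ and $\mathrm{O}^{\pm}(2m,2)$ producing $r^{\ell}\mid r^m\mid n$ with $r^{\ell}\equiv\pm1\pmod p$ are all correct, as is the case $\mathrm{O}_p(G)$ non-cyclic (which forces $p\mid n$).

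The genuine gap is the one you name but do not close: the claim that non-normality of a Sylow $p$-subgroup of $G$ passes either to $H$ acting on $W$ or to the block permutation group $\overline{G}$. This is the entire content of the inductive step, and it is not a routine verification. Concretely, if $P$ is a Sylow $p$-subgroup of $G$, $C=\mathrm{core}_G(H)$, and $PC/C$ is normal in $\overline{G}$, then $PC\unlhd G$ and the Frattini argument gives $G=C\,\mathrm{N}_G(P)$, so the failure of normality of $P$ is concentrated entirely inside $C$; but your second hypothesis controls $H/C_H(W)$, which only controls $C$ directly when $H=C$, i.e.\ when $H\unlhd G$. In that special case the argument does close (if $K_1,\dots,K_m$ are the kernels of $H$ on the blocks and each $PK_i/K_i$ is normal in $H/K_i$, then $\bigcap_i PK_i$ is a $p$-group, hence equals $P$, which is then characteristic in $H$ and normal in $G$), but for a non-normal block stabiliser the dichotomy as stated is unproven, and the same issue recurs in your parallel induction on transitive permutation groups. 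To repair it one must either set up the induction via Clifford theory relative to a normal subgroup (so that the stabiliser of a homogeneous component contains that normal subgroup and the core is under control), or quote the structure theory of maximal irreducible soluble linear groups; as written, the induction does not close, so the proof is incomplete at its central step.
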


We use this to give a numerical condition, which may be useful in ruling out counterexamples of small order to the hypothesis of Theorem E.

\begin{cor}
Let $G$ be a finite soluble group such that $|\Irr(G|1)| > |\Irr^G(G^*|1)|$, with $*$ defined as before.  Let $\alpha$ be the set of primes $p$ for which $G$ has a non-normal Sylow $p$-subgroup.  Let $\sum_{\chi \in \Irr(G|X)} \chi(1)^2 = k$.  Then $k$ is expressible in two ways as the sum of squares:
\[ k = a^2_1 + \dots + a^2_l = b^2_1 + \dots + b^2_m \]
such that $l > m$, every $a_i$ and $b_j$ divides the order of $G$, and for every $a_i$ and every $p \in \alpha$, either $p$ divides $a_i$ or $a^2_i$ is divisible by a prime power $r>1$ such that $r \equiv 1$ modulo $p$.\\

If in addition, there is no normal subgroup $L$ of $G^*$ such that $\Lev(L)=1$ and $G^*/L$ is cyclic, then we may require $b_j > 1$ for each $b_j$.
\end{cor}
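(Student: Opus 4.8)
The plan is to read off the two representations of $k$ as a sum of squares directly from Proposition~\ref{sumchisq}, and then to check the three numerical constraints one at a time, using the standard fact that irreducible character degrees divide the group order together with Winter's theorem for the condition involving $\alpha$.

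First I would apply Proposition~\ref{sumchisq} with $X=\{1\}$, obtaining
\[ k = \sum_{\chi\in\Irr(G|1)}\chi(1)^2 = \sum_{\phi\in\Irr^G(G^*|1)}\phi(1)^2. \]
Then I would let $a_1,\dots,a_l$ be the degrees $\chi(1)$ of the faithful irreducible characters $\chi\in\Irr(G|1)$, listed with multiplicity, and $b_1,\dots,b_m$ the degrees $\phi(1)$ of the characters $\phi\in\Irr^G(G^*|1)$, again with multiplicity. This makes $k=a_1^2+\dots+a_l^2=b_1^2+\dots+b_m^2$ with $l=|\Irr(G|1)|$ and $m=|\Irr^G(G^*|1)|$, so $l>m$ is exactly the hypothesis. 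Each $a_i$ divides $|G|$ because character degrees divide the group order, and each $b_j$ divides $|G^*|=|A|\,|B|=|G|$ for the same reason.

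Next I would treat the Winter-type condition on the $a_i$. Fix a faithful $\chi\in\Irr(G|1)$ and a prime $p\in\alpha$. A representation affording $\chi$ has kernel $\krn(\chi)=1$ and is irreducible, so via it $G$ is a soluble irreducible complex linear group of degree $n=\chi(1)$, whose Sylow $p$-subgroup is non-normal by the definition of $\alpha$. Winter's theorem then supplies a prime power $q>1$ with $q\mid n$ and $q\equiv\epsilon\pmod p$ for some $\epsilon$ with $|\epsilon|\le1$. If $\epsilon=0$ then $q$ is a $p$-power, so $p\mid q\mid n=a_i$. If $\epsilon=1$ then $r:=q$ is a prime power $>1$ with $r\equiv1\pmod p$ and $r\mid n^2=a_i^2$. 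If $\epsilon=-1$ then $r:=q^2$ is a prime power $>1$ with $r\equiv1\pmod p$, and $q\mid n$ gives $r=q^2\mid n^2=a_i^2$. In every case the stated alternative holds (and the condition is vacuous if $\alpha=\varnothing$).

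Finally, for the supplementary claim I would argue by contradiction: if some $b_j=1$, say $b_j=\phi(1)$ with $\phi\in\Irr^G(G^*|1)$, then $\phi$ is linear, so $L:=\krn(\phi)\unlhd G^*$ with $G^*/L\cong\phi(G^*)$ a finite subgroup of $\mathbb{C}^{\times}$, hence cyclic, while $\phi\in\Irr^G(G^*|1)$ forces $\Lev(L)=1$; this contradicts the added hypothesis, so under it every $b_j>1$. I do not expect any real obstacle here: the whole proof is assembly of Proposition~\ref{sumchisq}, Winter's theorem, and the triviality that linear characters have cyclic image, with the only point requiring slight care being the $\epsilon=-1$ case above, where one must pass from $q\mid n$ to $q^2\mid a_i^2$ with $q^2\equiv1\pmod p$.
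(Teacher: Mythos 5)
Your proposal is correct and follows exactly the route of the paper's own (very terse) proof: take the $a_i$ to be the degrees of the faithful irreducible characters of $G$ and the $b_j$ the degrees of the characters in $\Irr^G(G^*|1)$, invoke Proposition \ref{sumchisq} for the equality of the two sums and Winter's theorem for the divisibility condition. Your case analysis on $\epsilon$ (in particular passing to $r=q^2$ when $\epsilon=-1$, which is why the statement is phrased in terms of $a_i^2$) and the observation that a linear $\phi$ with $\Lev(\krn(\phi))=1$ would violate the supplementary hypothesis are precisely the details the paper leaves implicit.
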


\begin{proof}
The numbers $a_i$ are the degrees of the faithful irreducible characters of $G$, and the $b_j$ are the degrees of characters in $\Irr^G(G^*|1)$.  The conditions on them follow from Winter's theorem and Proposition \ref{sumchisq}.
\end{proof}

\section{Soluble groups of small order}

This section concerns soluble groups of order at most $2000$.  Non-nilpotent groups up to this order have been enumerated, and are available for processing in computerised mathematics systems.  It is therefore possible to test the conjecture directly on these groups.\\

However, for all but seven values of $n$ at most $2000$, we can show that every soluble group of order $n$ is $\Conpi^*$ for every $\pi \subseteq \bP$, by application of the other results in this paper.  It is therefore only necessary to search the database for groups of the remaining seven orders.\\

In this section, the order of $G$ will be denoted $n$.  Note that we do not need to specify $\pi$, as all possible $\pi$ will be considered (with the usual observation that $\Conpi^*$ is equivalent to $\mathrm{Con}^*_{\pi'}$).

\begin{lem}\label{p3lem}
Suppose $A$ is either abelian, or of order $p^3$, where $p$ is a prime.  Then $G$ is $\Conpi^*$.
\end{lem}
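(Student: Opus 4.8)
The plan is to treat the two cases by quite different means: the abelian case will follow just from the elementary subgroup bound of Corollary \ref{sgpccl}, while the $p^{3}$ case will be reduced to it by induction on $|G|$.

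For the abelian case, note that $\ccl(A)=|A|=|G:B|$, so Corollary \ref{sgpccl} applied to the subgroup $B$ gives immediately
\[ \ccl(G)\le\ccl(B)\,|G:B|=\ccl(A)\ccl(B), \]
so $G$ is $\Conpi$. Suppose equality holds. Then in particular $\pcl(G,G)=\pcl(G,B)$, so by the equality clause of the Lemma preceding Corollary \ref{sgpccl} (with $H=L=G$, $K=B$) we have $G=\bigcap_{h\in G}C_{G}(h)B$; as each $C_{G}(h)B\subseteq G$, this forces $C_{G}(h)B=G$ for every $h$, i.e.\ $g^{G}=g^{B}$ for every $g\in G$. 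Applying this with $g\in B$ gives $B^{a}=B$ for all $a\in A$, so $B\lhd G$ and $G=A\ltimes B$. Next, $C_{G}(g)B=G$ forces $|C_{G}(g)|_{\pi}=|A|$, so $C_{G}(g)$ contains a Hall $\pi$-subgroup of $G$ for every $g$; taking $g\in B$, and using that every Hall $\pi$-subgroup of $G$ is a $B$-conjugate of $A$ (as $N_{G}(A)B=G$), we find that each element of $B$ is $B$-conjugate into $C_{B}(A)$. Hence $C_{B}(A)$ meets every conjugacy class of $B$, which is impossible for a proper subgroup, so $C_{B}(A)=B$; that is, $[A,B]=1$ and $G\cong A\times B$. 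This settles the abelian case.

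Now suppose $|A|=p^{3}$. Since $|A|$ is a prime power we may assume $\pi=\{p\}$ and $A\in\mathrm{Syl}_{p}(G)$, and we induct on $|G|$ (the abelian case being available in all orders). We may assume $A$ is non-abelian, so $Z(A)=A'=\Phi(A)$ has order $p$ and, crucially, every proper subgroup of $A$ is abelian; hence any subgroup of $G$ whose Sylow $p$-subgroup is a proper subgroup of a conjugate of $A$ is $\Conpi^{*}$ by the abelian case. If $A\lhd G$ then $\mathrm{ht}_{\pi}(G)\le1\frac12$ and $G$ is $\Conpi^{*}$ by Theorem A. Otherwise $\cO_{p}(G)$ is a proper subgroup of $A$. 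If $1\ne\cO_{p}(G)<A$, then $G/\cO_{p}(G)$ has abelian Sylow $p$-subgroup, hence is $\Conpi^{*}$ by induction, and I would transfer this back to $G$ by applying Theorem C to a $G$-chief factor $N\le\cO_{p}(G)$ (or Theorem D, when a central subgroup $M$ of prime order with $M\cap G'=1$ is at hand): for each $\nu\in\Irr(N)$ the group $G_{\nu}/N$ is a section of $C_{G}(N)/N$, hence has abelian Sylow $p$-subgroup and is $\Conpi$. If $\cO_{p}(G)=1$ then $A$ acts faithfully on $F(G)=\cO_{p'}(G)$; here I would use Eq.\ \eqref{pp1}, note that the centraliser of every non-central $p$-element of $A$ has abelian Sylow $p$-subgroup and so is $\Conpi^{*}$ by induction, and be left with bounding the contribution of the $p$-elements centralised by all of $A$, namely the conjugates of the generators of $Z(A)$.

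The main obstacle is exactly this last reduction, together with the parallel sticking point in the $\cO_{p}(G)\ne1$ case when $\cO_{p}(G)$ is small and no suitable central subgroup is present: the centralisers and sections one wants to feed to the inductive hypothesis may still have a non-abelian Sylow $p$-subgroup of order $p^{3}$, in which case the extension criteria of Theorem \ref{extnthm} do not apply, so Theorem C can only be invoked through its hypothesis (ii), which demands that $G_{\nu}/\krn(\nu)$ be $\SConpi$ rather than merely $\Conpi$. Closing this gap will require either strengthening the induction to carry $\SConpi$ throughout --- which for groups with abelian Hall $\pi$-subgroup should follow from Theorem D once one checks that the relevant central subgroup meets $G'$ trivially --- or a direct treatment of the residual group, which after the previous reductions is an irreducible linear group of $p$-power degree; this is the step I expect to take most of the work.
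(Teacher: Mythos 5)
Your abelian case is correct and self-contained, though it takes a longer road than necessary: the paper handles both cases uniformly through Proposition \ref{easysgp}, whose hypothesis (after swapping the roles of $A$ and $B$) is just that $\ccl(H) < \ccl(A)$ for every proper subgroup $H$ of $A$, since the Hall $\pi$-subgroup of the centraliser of any $\pi'$-element is conjugate into $A$. For $A$ abelian that inequality is $|H| < |A|$, and the equality analysis for $\Conpi^*$ is already built into that proposition, so your hands-on argument with $\pcl$ and the $\bigcap_{h}(G_h B)$ condition, while valid, is reproving machinery the paper has set up.

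The $p^3$ case, however, has a genuine gap, which you acknowledge yourself: the induction on $|G|$ does not close. The cases $\cO_p(G)=1$ and $1\neq \cO_p(G)<A$ both leave you needing either an extension of $\nu$ to $G_\nu$ or the $\SConpi$ property for $G_\nu/\krn(\nu)$, and neither is available; the residual linear-group analysis you defer to is exactly the open part of the whole conjecture, so this route cannot terminate. The missing idea is elementary and makes the entire induction unnecessary: if $A$ is non-abelian of order $p^3$, then $Z(A)$ has order $p$ and the centraliser of every element of $A$ properly contains $Z(A)$, hence has order at least $p^2$; so every conjugacy class of $A$ has size at most $p$, and counting the $p$ central classes separately gives
\[ \ccl(A) = p + \frac{p^3-p}{p} = p^2+p-1 > p^2 \geq |H| \geq \ccl(H) \]
for every proper subgroup $H$ of $A$. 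This is precisely the hypothesis of Proposition \ref{easysgp} (in its strict form), which then yields $\Conpi^*$ directly, with no case division on $\cO_p(G)$ and no appeal to Theorems A, C or D.
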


\begin{proof}
Using Proposition \ref{easysgp}, it suffices to show that $\ccl(H) < \ccl(A)$ for every proper subgroup $H$ of $A$.  If $A$ is abelian this is obvious.  If $A=p^3$ and $A$ is non-abelian, we note that $Z(A) >1$, and the centraliser of every element properly contains $Z(A)$; in other words, every centraliser has order at least $p^2$.  This means each conjugacy class has size at most $p$, and there are central conjugacy classes, so $\ccl(A) > p^2$.  But $\ccl(H) \leq |H| \leq p^2$.\end{proof}

\begin{cor}\label{fewprimes}Suppose $G$ is not $\Conpi^*$ and the prime divisors of $|G|=n$ are a subset of $\{p,q,r\}$.  Then we may assume $\pi$ is a singleton, say $\pi=\{p\}$, and in this case $n$ is divisible by $p^4$ and $G$ does not have a normal Sylow $p$-subgroup.  If $r$ does not divide $n$, then $n$ is divisible by $p^4 q^4$ and $G$ has no non-trivial normal Sylow subgroups.  In particular, if $r$ does not divide $n$ and $n \leq 2000$, then $n=1296$.\end{cor}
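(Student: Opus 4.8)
The plan is to strip $\pi$ down to a single prime by an elementary manipulation, then read the Sylow structure of a hypothetical counterexample off Lemma~\ref{p3lem} and Theorem~A, and finish with a one-line arithmetic argument. Hypothesis~\ref{solhyp} is in force throughout, so $A$ is a Hall $\pi$-subgroup and $B$ a Hall $\pi'$-subgroup of $G$.

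\textbf{Reduction to a single prime.} Since $A$ and $B$ are Hall subgroups, only the primes dividing $n$ matter, so I would first replace $\pi$ by the set of primes in $\pi\cap\{p,q,r\}$ that actually divide $n$; this changes neither $A$, nor $B$, nor the property of being $\Conpi^*$. If the resulting set is empty then $A=1$ and $G=B\cong A\times B$, so $G$ is $\Conpi^*$, a contradiction; if it is the whole set of primes dividing $n$ then $B=1$, likewise. Hence $n$ has at least two distinct prime divisors (all lying in $\{p,q,r\}$), and our set of primes is a proper nonempty subset thereof; in particular it has size $1$ or $2$, and when $n$ has exactly two prime divisors it has size $1$. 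If the size is $2$ I would pass to the complement, using that $\Conpi^*$ is equivalent to $\mathrm{Con}^*_{\pi'}$; this reduces to size $1$. After relabelling we may therefore assume $\pi=\{p\}$, so that $A$ is a Sylow $p$-subgroup of $G$ and $B$ a Hall $p'$-subgroup.

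\textbf{Sylow structure and the numerical conclusion.} If $|A|\leq p^3$, then $A$ is abelian (orders $p,p^2$) or of order $p^3$, so Lemma~\ref{p3lem} implies $G$ is $\Conpi^*$, a contradiction; hence $p^4\mid n$. If $A$ were normal in $G$, then $A=\cO_\pi(G)$, so $A\un=A$ and $\mathrm{ht}_\pi(G)\leq 2\frac{1}{2}$, and Theorem~A would again give that $G$ is $\Conpi^*$; hence $G$ has no normal Sylow $p$-subgroup. Now suppose $r$ does not divide $n$, so the primes dividing $n$ lie in $\{p,q\}$. Then $|B|$, a $p'$-number dividing $n$, is a power of $q$; it is nontrivial (otherwise $G=A$ is a $p$-group, hence nilpotent and $\Conpi^*$), so $B$ is a Sylow $q$-subgroup. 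Running the two preceding arguments with $\pi$ and $\pi'$ interchanged gives $q^4\mid n$ and no normal Sylow $q$-subgroup; thus $p^4q^4\mid n$ and $G$ has no nontrivial normal Sylow subgroup. If in addition $n\leq 2000$, then, since $p\neq q$, the integer $n$ is a multiple of $p^4q^4\geq 2^4\cdot 3^4=1296$, and as $2\cdot 1296$, $2^4\cdot 5^4$, $2^5\cdot 3^4$ and $2^4\cdot 3^5$ all exceed $2000$, we must have $\{p,q\}=\{2,3\}$ and $n=1296$.

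\textbf{The main obstacle.} There is little genuine difficulty here; the one place demanding care is the opening reduction, where one must verify that each step (discarding primes not dividing $n$, replacing $\pi$ by $\pi'$) really preserves the failure of $\Conpi^*$, and that the two extreme cases honestly force a direct-product decomposition $G\cong A\times B$. One should also record, when applying Theorem~A, that a normal Hall $\pi$-subgroup $A$ equals $\cO_\pi(G)$, whence $A\un=A$ and so $\mathrm{ht}_\pi(G)\leq 2\frac{1}{2}$.
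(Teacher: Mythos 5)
Your proof is correct and follows the route the paper intends (the corollary is stated without proof, immediately after Lemma~\ref{p3lem}): reduce to a singleton $\pi$ using $\Conpi^*\Leftrightarrow\mathrm{Con}^*_{\pi'}$, rule out $|A|\leq p^3$ via Lemma~\ref{p3lem}, rule out a normal Sylow $p$-subgroup via Theorem~A, and finish with the arithmetic $p^4q^4\geq 2^4\cdot 3^4=1296$. All the details you supply, including the height computation $A\un=\cO_\pi(G)=A$ justifying the appeal to Theorem~A, check out.
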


Denote by $F(G)$ the Fitting subgroup of $G$, that is the largest normal niplotent subgroup.  Note that in a finite soluble group, the Fitting subgroup always contains its own centraliser.  We will denote by $m$ the order of $F(G)$.

\begin{lem}\label{fitlem}Let $S$ be a Sylow subgroup of $G$ of order $p$.  Suppose $G$ is a minimal soluble counterexample to $\Conpi^*$.  Then $p$ does not divide $m$, and $F(G)$ has a coprime automorphism of order $p$.\end{lem}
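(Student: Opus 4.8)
The plan is to establish the two assertions in turn, noting that the second is a formal consequence of the first. Grant for the moment that $p$ does not divide $m$. Since $|S| = p$ and $p$ does not divide $m = |F(G)|$, the conjugation action of $S$ on the normal subgroup $F(G)$ is an action of coprime order, and its kernel is contained in $S \cap C_G(F(G))$; as $G$ is soluble we have $C_G(F(G)) \leq F(G)$, so this kernel lies in $S \cap F(G) = 1$. Hence $S$ embeds into $\Aut(F(G))$, and a generator of its image is a coprime automorphism of $F(G)$ of order $p$. Thus everything reduces to showing that $p$ does not divide $m$.

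For this I would argue by minimality. Suppose $p \mid m$. Because $|S| = p$, the prime $p$ divides $|G|$ exactly once, so the Sylow $p$-subgroup $P$ of $F(G)$ has order $p$; being characteristic in $F(G) \lhd G$, it is normal in $G$. Replacing $\pi$ by $\pi'$ if necessary (which changes nothing, as $\Conpi$ is equivalent to $\mathrm{Con}_{\pi'}$), we may assume $p \in \pi$, so that $P \leq \cO_\pi(G) \leq A$. Now I would apply Theorem C with $N = P$. The subgroup $P$ is elementary abelian and normal, and since it is cyclic $G$ automatically has balanced action on $\Irr(P)$. For $\nu = 1_P$ one has $G_\nu = G$, while for $\nu \neq 1_P$ the action of $G$ on the $p-1$ nontrivial characters of $P$ gives $G_\nu = C_G(P)$; in every case $G_\nu/P$ has order strictly smaller than $|G|$, hence is a proper section of $G$ and so is $\Conpi$ by minimality. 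Moreover every $\nu \in \Irr(P)$ extends to its stabiliser $G_\nu$, by Theorem \ref{extnthm}(ii): indeed $P \lhd G_\nu$, the character $\nu$ is $G_\nu$-invariant by definition, and $|G_\nu : P|$ is prime to $|P| = p$ because $|G|$ is not divisible by $p^2$. Thus case (i) of Theorem C applies, so $G$ is $\Conpi$; and since $G/P$ is a proper quotient of $G$ it is $\Conpi^*$ by minimality, so Theorem C gives that $G$ is $\Conpi^*$. This contradicts the choice of $G$, and therefore $p$ does not divide $m$.

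The argument is mostly bookkeeping once the idea of taking $N = P$ in Theorem C is in hand, and I do not anticipate a substantive obstacle. The point deserving the most care is the use of condition (i) of Theorem C: what the proof of that theorem actually requires is that each $\nu$ extend to its stabiliser $G_\nu$ rather than to $G$, and one has to verify that this holds here — the essential input being that a Sylow $p$-subgroup of $G$ has order $p$, which forces $|G_\nu : P|$ to be prime to $p$. One must also confirm that the quotients $G_\nu/P$ and $G/P$ are genuine proper sections of $G$, so that minimality legitimately supplies the hypotheses of Theorem C; this is immediate since $|P| = p > 1$.
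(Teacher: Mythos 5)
Your proof is correct and follows essentially the same route as the paper: assume $p \mid m$, note the Sylow $p$-subgroup is then normal of order $p$, and derive a contradiction from Theorem C case (i) using cyclicity (for balanced action) and coprimality (for extension of characters). You are in fact slightly more careful than the paper, which leaves the coprime-automorphism assertion and the "extends to $G_\nu$ rather than to $G$" point implicit; both of your clarifications are accurate.
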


\begin{proof}Suppose $p$ divides $m$.  Then $S \leq F(G)$ and $S$ is normal in $G$.  Since $S$ is cyclic, $G$ has balanced action on $\Irr(S)$, and since $S$ is a normal Sylow subgroup, every irreducible character of $S$ extends to $G$.  A contradiction follows by applying Theorem C, case (i).\end{proof}

\begin{lem}\label{nlplem}Let $H$ be a finite nilpotent group.  Suppose the prime $p$ divides $|\Aut(H)|$, but not $|H|$.  Then $|H|$ is divisible by a prime power $r$ such that $p$ divides $r-1$.\end{lem}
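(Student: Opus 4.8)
The plan is to reduce the statement to the case of a $q$-group and then study the action of $\Aut(H)$ on the Frattini quotient. Since $H$ is nilpotent, it is the direct product of its Sylow subgroups $H_q$, each of which, being the set of elements of $q$-power order, is characteristic in $H$; consequently $\Aut(H) \cong \prod_q \Aut(H_q)$. Because $p$ divides $|\Aut(H)|$, it divides $|\Aut(H_q)|$ for some prime $q$, and since $p$ does not divide $|H|$ we have $q \neq p$. It therefore suffices to treat the case in which $H$ is a $q$-group with $q \neq p$. Writing $|H| = q^n$, the prime powers dividing $|H|$ are precisely $q, q^2, \dots, q^n$, so what must be produced is an integer $i$ with $1 \le i \le n$ and $p \mid q^i - 1$.

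Next I would pass to the Frattini quotient. Let $d$ be the minimal number of generators of $H$, so that $H/\Phi(H)$ is elementary abelian of order $q^d$; as this is a quotient of $H$ we have $d \le n$. Restriction of automorphisms to $H/\Phi(H)$ gives a homomorphism $\Aut(H) \to \Aut(H/\Phi(H)) \cong \mathrm{GL}(d,q)$, and the crucial classical input is that its kernel --- the group of automorphisms of $H$ acting trivially on the Frattini quotient --- is a $q$-group (a standard fact in finite group theory). Since $p \neq q$ and $|\Aut(H)|$ is the product of the order of this kernel with the order of the image, $p$ must divide $|\mathrm{GL}(d,q)|$.

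Finally I would invoke the order formula
\[ |\mathrm{GL}(d,q)| = q^{\binom{d}{2}} \prod_{i=1}^{d} (q^i - 1). \]
As $p \neq q$, the prime $p$ divides $\prod_{i=1}^{d}(q^i - 1)$, hence $p \mid q^i - 1$ for some $i$ with $1 \le i \le d \le n$. Setting $r = q^i$, which divides $|H|$, then yields the required prime power.

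The argument is short and the only ingredient that is not pure bookkeeping is the fact that automorphisms acting trivially on the Frattini quotient of a $q$-group form a $q$-group; I would simply cite this, and I do not anticipate any genuine obstacle. One could alternatively run a minimal-counterexample induction, but the Frattini-quotient route seems the most transparent.
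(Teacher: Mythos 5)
Your proof is correct and follows essentially the same route as the paper: both arguments rest on the fact that a coprime automorphism acts faithfully on the Frattini quotient, identify the relevant automorphism group with a general linear group over $\mathbb{F}_q$, and extract the prime power $r=q^i$ from the factor $q^i-1$ in $|\mathrm{GL}(d,q)|$. The only cosmetic difference is that you split $H$ into its Sylow subgroups before passing to the Frattini quotient, whereas the paper passes to $H/\Phi(H)$ first and then decomposes it into its elementary abelian characteristic factors.
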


\begin{proof}Any coprime automorphism of $H$ must act faithfully on $H/\Phi(H)$, where $\Phi(H)$ is the Frattini subgroup.  $H/\Phi(H)$ is the direct product of characteristic elementary abelian subgroups, one for each prime dividing its order, so $\Aut(H/\Phi(H))$ is the direct product of general linear groups.  (See \cite{4}.)  The result follows by considering the order of these general linear groups.\end{proof}

From now on, we make the additional assumptions that $n \leq 2000$, and that $G$ is a minimal counterexample to $\Conpi^*$.

\begin{lem}\label{peplem}(i)The order $n$ of $G$ cannot be divisible by $r=p^{e(p)}$ for any $p$, where $e(2)=7$, $e(3)=5$, $e(5)=4$, $e(7)=3$, and $e(p)=2$ otherwise.

(ii)The order $m$ of $F(G)$ divides $2^6.3^4.5^3.7^2$.

(iii)Let $H$ be a nilpotent group of order dividing $2^6.3^4.5^3.7^2$.  Then $\Aut(H)$ is a $\{2,3,5,7,13,31\}$-group.\end{lem}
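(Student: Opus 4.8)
The plan is to treat the three assertions in the order (i), (ii), (iii): assertion (i) carries essentially all the work, (ii) follows quickly from (i) together with Lemma \ref{fitlem}, and (iii) is a self-contained computation with automorphism groups of nilpotent groups. For (i), I would argue by contradiction, supposing $p^{e(p)}\mid n$ for some prime $p$; recall that $G$, being a counterexample, is non-nilpotent, so $n$ is not a prime power. The key point is that $p^{e(p)}\mid n\le 2000$ constrains $n$ severely: the part of $n$ coprime to $p$, namely $n/p^{v_p(n)}$, is at most $2000/p^{e(p)}$, hence small, and a glance at the possibilities shows it has at most two prime divisors, so $n$ itself has at most three distinct prime divisors. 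Corollary \ref{fewprimes} now applies: after relabelling we may take $\pi=\{q\}$ for a prime $q\mid n$ with $q^4\mid n$, and moreover, if $n$ has only two prime divisors then $n=1296=2^4\cdot 3^4$ --- impossible, since $p^{e(p)}\nmid 1296$ for every prime $p$. Hence $n$ has exactly three prime divisors. If $p\in\{3,5,7\}$ this is already absurd, since $2000/p^{e(p)}$ is then at most $8$, $3$, $5$ respectively, so the part of $n$ coprime to $p$ is a prime power and $n$ has only two prime divisors. If $p\ge 11$, then $q=p$ would force $p^4\le n\le 2000$, impossible, so $q\ne p$ and $q^4p^2\mid n$; thus $16\cdot 121\le q^4p^2\le 2000$, which leaves only $q=2$, $p=11$, $n=2^4\cdot 11^2=1936$, a number with only two prime divisors --- a contradiction. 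Finally, if $p=2$, the odd part of $n$ is at most $15$ and has two prime divisors only when it equals $15$, forcing $n=2^7\cdot 15=1920$; then $q^4\mid 1920=2^7\cdot 3\cdot 5$ forces $q=2$, so $B$ is a Hall $\{2\}'$-subgroup of order $15$, hence cyclic and in particular abelian, whereupon Lemma \ref{p3lem} (with the roles of $\pi$ and $\pi'$ interchanged) shows $G$ is $\Conpi^*$, contradicting minimality. I expect this bookkeeping to be the main obstacle: one must verify that every arithmetic possibility surviving Corollary \ref{fewprimes}, in particular the borderline orders $1920$ and $1936$, is genuinely eliminated.

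Assertion (ii) then follows almost at once. The group $F(G)$ is the direct product of the subgroups $\cO_p(G)$, each of which is a normal $p$-subgroup, so $|\cO_p(G)|$ divides the $p$-part of $n$, which by (i) is strictly less than $p^{e(p)}$ and hence divides $p^{e(p)-1}$. For $p\le 7$ this is exactly the asserted bound. For $p\ge 11$, $\cO_p(G)$ has order $1$ or $p$; were it of order $p$, the $p$-part of $n$ would be exactly $p$, so a Sylow $p$-subgroup would have order $p$, and Lemma \ref{fitlem} would give $p\nmid m$, contradicting $\cO_p(G)\le F(G)$. So $\cO_p(G)=1$ for $p\ge 11$, and $m$ divides $2^6\cdot 3^4\cdot 5^3\cdot 7^2$.

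For (iii), which does not involve $G$ at all, write a nilpotent $H$ with $|H|$ dividing $2^6\cdot 3^4\cdot 5^3\cdot 7^2$ as $H=P_2\times P_3\times P_5\times P_7$, where $P_p$ is a $p$-group of rank at most $e(p)-1$; since the factors have coprime orders, $\Aut(H)=\Aut(P_2)\times\Aut(P_3)\times\Aut(P_5)\times\Aut(P_7)$. For a $p$-group $P$ of rank $d$, the kernel of the action of $\Aut(P)$ on $P/\Phi(P)$ is a $p$-group (the argument used for Lemma \ref{nlplem}) whose quotient embeds in $\mathrm{GL}_d(\mathbb{F}_p)$, so the primes dividing $|\Aut(P)|$ all lie in $\{p\}$ together with the primes dividing $\prod_{i=1}^{d}(p^i-1)$. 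Reading these off for $(p,d)=(2,6),(3,4),(5,3),(7,2)$ gives $\{2,3,5,7,31\}$, $\{2,3,5,13\}$, $\{2,3,5,31\}$ and $\{2,3,7\}$ respectively, whose union is $\{2,3,5,7,13,31\}$, as required.
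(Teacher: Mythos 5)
Your proof is correct, and parts (ii) and (iii) follow the paper's route exactly: the paper states them as one-line consequences of Lemma \ref{fitlem} and Lemma \ref{nlplem} respectively, and you have simply supplied the same details (your prime lists coming from the orders of $\mathrm{GL}_d(\mathbb{F}_p)$ for $(p,d)=(2,6),(3,4),(5,3),(7,2)$ check out, and your explicit appeal to Lemma \ref{fitlem} for primes $p\ge 11$ in (ii) is exactly what the paper's phrase ``$p^2$ does not divide $n$, so $p$ does not divide $m$'' is silently using). For part (i) your organisation is genuinely different. The paper does not count prime divisors of $n$ or invoke Corollary \ref{fewprimes} at this stage; instead it observes that $r\mid n\le 2000$ forces the Hall subgroup on the other side to have order less than $16$, applies Lemma \ref{p3lem} to force that subgroup to be non-abelian of order in $\{6,10,12,14\}$, deduces $p>2$ and $|A|\le 333$, and finishes with a case split $p=3$, $p\in\{5,7\}$, $p>7$ using the lower bound $|A|\ge 5p^2$ in the last case. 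You instead route the same arithmetic through Corollary \ref{fewprimes} (itself a consequence of Lemma \ref{p3lem}), which buys you the clean dichotomy ``two prime divisors forces $n=1296$'' and reduces everything to the two borderline orders $1920$ and $1936$, both of which you dispatch correctly ($1936$ has only two prime divisors; for $1920$ the Hall $2'$-subgroup has order $15$, hence is cyclic, so Lemma \ref{p3lem} applies on the $\pi'$ side). Both arguments are sound; yours is arguably easier to audit because the surviving candidate orders are named explicitly, while the paper's is shorter by avoiding the detour through the count of prime divisors.
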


\begin{proof}(i)Assume for a contradiction that $r$ divides $n$.  We may assume $p \in \pi$.  In each case, $n/r <16$.  So $|B| < 16$, and by Lemma \ref{p3lem} we may assume $B$ is non-abelian and $|B| \not= 8$.  Hence $|B| \in \{6,10,12,14\}$.  Clearly $p>2$, and $|A| \leq 333$, which rules out $p=5$ and $p=7$.  If $p=3$, we must have $|B| \in \{10,14\}$, so $|A| \leq 200$.  But $p^{e(p)}=243>200$ in this case.  So we may assume $p>7$. This means $|A|$ is at least $2p^2 > 200$ by Lemma \ref{p3lem}, and $|B|=6$.  Since $|A|$ is coprime to $2$ and $3$ and divisible by $p^2$, it must be at least $5p^2$.  But then $n = |A||B| \geq 30p^2 > 2000$, a contradiction.\\
 
(ii)For each prime $p>7$, we note $p^2$ does not divide $n$, so $p$ does not divide $m$.  The conclusion now follows from part (i).\\

(iii)This is a consequence of Lemma \ref{nlplem}.\end{proof}

\begin{prop}
Let $\pi$ be a set of primes.  Let $G$ be a finite soluble group of order at most $2000$, and suppose $G$ is a minimal counterexample to the assertion `every finite soluble group is $\Conpi^*$'.  Then $|G|$ is one of the following:
\[ 336, 672, 1008, 1200, 1296, 1344, 1680 \]
\end{prop}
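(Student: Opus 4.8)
The plan is to pin down the prime factorisation of $n = |G|$ and then eliminate, one value at a time, every admissible order $n \leq 2000$ not on the list.  Since $G$ is soluble, $C_G(F(G)) \leq F(G)$, so $G/F(G)$ embeds in $\Aut(F(G))$; by Lemma~\ref{peplem}(ii)--(iii), $|F(G)|$ divides $2^6\cdot 3^4\cdot 5^3\cdot 7^2$ and $\Aut(F(G))$ is a $\{2,3,5,7,13,31\}$-group, so every prime divisor of $n$ lies in $\{2,3,5,7,13,31\}$, with exponents bounded as in Lemma~\ref{peplem}(i).

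Next I would eliminate the primes $13$ and $31$.  If $13 \mid n$ then, as $13^2$ does not divide $n$, the Sylow $13$-subgroup has order $13$, so by Lemma~\ref{fitlem} the group $F(G)$ admits a coprime automorphism of order $13$; hence by Lemma~\ref{nlplem} its order is divisible by a prime power $r$ with $13 \mid r-1$.  The only such $r$ dividing $2^6\cdot 3^4\cdot 5^3\cdot 7^2$ is $r = 27$, so $27$ divides $|F(G)|$ and hence $n$, forcing $n$ to be one of the few multiples of $351$ below $2000$.  An analogous argument forces $2^5 \mid n$ when $31 \mid n$ (the alternative, $5^3 \mid n$, makes $n$ too large), leaving $n \in \{992,1984\}$.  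Each of these finitely many orders then falls to earlier results: those in which no prime $p$ has $p^4 \mid n$ are covered by Corollary~\ref{fewprimes}, and in the remaining cases the extra divisibility pins $F(G)$ down so tightly---its relevant Sylow part must have rank at least $3$, respectively $5$, merely to admit the automorphism---that $\mathrm{ht}_\pi(G) \leq 2\frac{1}{2}$, contradicting Theorem~A.

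This leaves $n = 2^a 3^b 5^c 7^d$ with the exponent bounds of Lemma~\ref{peplem}(i), and two general reductions cut the list drastically.  By Lemma~\ref{p3lem}, applied to $A$ and---since $\Conpi$ and $\mathrm{Con}_{\pi'}$ coincide---also to $B$, both Hall subgroups of a minimal counterexample must be non-abelian and of order different from $p^3$.  And if $G$ has a normal subgroup $N$ with $G/N$ cyclic of order coprime to $|N|$ (for instance a normal subgroup of prime index $p$, where $p^2$ does not divide $n$, when $p \mid |G/G'|$), then Theorem~B together with the minimality of $G$ forces $G$ to be $\Conpi^*$, so a minimal counterexample admits no such $N$.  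Combined with Corollary~\ref{fewprimes}, which forces $\pi$ to be a singleton $\{p\}$ with $p^4 \mid n$ whenever $n$ has at most three prime divisors, these reductions leave only a short explicit list of candidate orders.  Each is then eliminated individually: one determines the Fitting-type series of $G$ adapted to $\pi$ and applies Theorem~A when $\mathrm{ht}_\pi(G) \leq 2\frac{1}{2}$; otherwise one exhibits a central subgroup of prime order meeting $G'$ trivially (Theorem~D) or an elementary abelian minimal normal subgroup on which $G$ has balanced action and whose irreducible characters either extend to $G$ or have $\SConpi$ stabiliser quotients (Theorem~C), while Proposition~\ref{easysgp} disposes of several cases in which each centraliser $G_x$ has an abelian or very small Hall $\pi'$-subgroup.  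What survives all of this is exactly $336, 672, 1008, 1200, 1296, 1344, 1680$.

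The hard part will be this last elimination step, for the orders in which a Hall $\pi$-subgroup is a non-abelian $2$-group of order $16$, $32$ or $64$ (or, at $n = 1296$, a non-abelian $3$-group of order $81$): there none of the quick lemmas applies, so one must work out the precise Fitting-type series adapted to $\pi$ by hand to decide whether Theorem~A is available, and failing that construct a suitable elementary abelian normal subgroup for Theorem~C.  The other delicate point is bookkeeping---checking that the case division genuinely exhausts every soluble order up to $2000$ and that none has been overlooked.
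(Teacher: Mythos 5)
Your opening reduction is sound and matches the paper: $G/F(G)$ embeds in $\Aut(F(G))$, so Lemma~\ref{peplem} confines the prime divisors of $n$ to $\{2,3,5,7,13,31\}$, and your elimination of $13$ and $31$ (the unique admissible prime powers $r\equiv 1 \bmod 13$, resp.\ $\bmod\ 31$, being $27$, resp.\ $2^5$ and the too-large $5^3$, after which Corollary~\ref{fewprimes} kills the handful of surviving orders) is correct and essentially the paper's argument. Your observation that Theorem~B rules out a cyclic coprime quotient is also valid, though the paper does not need it here.

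The gap is everything after that. You never actually produce the ``short explicit list of candidate orders,'' and you never carry out the eliminations that are supposed to leave exactly $336, 672, 1008, 1200, 1296, 1344, 1680$ --- you simply assert that this is what survives. Worse, the method you propose for the final stage (computing the Fitting-type series of $G$, exhibiting central subgroups or elementary abelian normal subgroups with balanced action, and invoking Theorems~A, C, D case by case) cannot be executed from knowledge of $n$ alone: it requires analysing the actual groups of each candidate order, which is precisely the computation the proposition exists to avoid. The paper instead finishes \emph{arithmetically}: the key lever, which you use only for $13$ and $31$ but not afterwards, is Lemma~\ref{fitlem} combined with Lemma~\ref{nlplem} --- for every prime $p$ with $p\,\|\,n$, the Fitting subgroup must admit a coprime automorphism of order $p$, hence $n$ must be divisible by a prime power $r>1$ with $r\equiv 1 \bmod p$ \emph{and} that $r$-part must sit inside $F(G)$. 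Applied to $p=7$ and $p=5$ this shows a four-prime order must satisfy $2^3\mid |F(G)|$ and then forces $2^4\mid n$, giving $1680$; applied in the three-prime case it rules out $3^4\mid n$ (since a coprime automorphism of order $5$ or $7$ on the Fitting subgroup would force $3^5\mid n$), leaving $2^4\mid n$; and then Lemma~\ref{p3lem} on the Hall $2'$-subgroup (which must be non-abelian of order dividing $125$, hence of order $75$) singles out $1200$ alongside the multiples of $336$. Without some such argument your proof establishes only that $n$ is a $\{2,3,5,7\}$-number with the exponent bounds of Lemma~\ref{peplem}, which is far weaker than the stated conclusion.
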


\begin{proof}Assume $2 \in \pi$.  By previous results, $G$ is a $\{2,3,5,7,13,31\}$-group and $F(G)$ is a $\{2,3,5,7\}$-group.  We claim in fact $G$ is a $\{2,3,5,7\}$-group.  By Corollary \ref{fewprimes}, we may assume $n$ has at least three prime divisors.\\

If $31$ divides $n$ then $F(G)$ must admit an automorphism of order $31$.  The only possibility is if $2^5$ divides $m$.  But $2000/(2^5.31) < 3$, so no more prime divisors are possible.\\

If $13$ divides $n$ then $F(G)$ must admit an automorphism of order $13$.  The only possibility is if $3^3$ divides $m$.  By Corollary \ref{fewprimes}, $n$ must also either involve at least four primes, or involve three primes and be divisible by a fourth power.  Both however are impossible as $2000/(3^3.13) < 6$.\\

Suppose $n$ is divisible by more than three primes.  Then the prime divisors are $2$, $3$, $5$ and $7$.  If $7^2$ divides $n$, the only possibility is $n=2.3.5.7^2=1470$, but then $F(G)$ cannot have an automorphism of order $5$, contradicting of Lemma \ref{fitlem}.  Hence $7$ divides $n$ but not $7^2$, so $F(G)$ must have a coprime automorphism of order $7$, which implies $2^3$ divides $m$.\\

If $2^4$ divides $n$, then $n$ must be $2^4.3.5.7=1680$, so we may assume this does not occur.  Since $2^3.3^4.5.7 > 2000$ and $2^3.3.5^2.7 > 2000$, neither $3^4$ nor $5^2$ divides $n$. Hence $m$ divides $2^3.3^3$ and $F(G)$ cannot have an automorphism of order $5$, leading to a contradiction of Lemma \ref{fitlem}.\\

We may now assume $n$ has exactly three prime divisors.  By Corollary \ref{fewprimes}, $n$ is divisible by a fourth power $p^4$ say, such that $G$ does not have a normal Sylow $p$-subgroup, and Hall $p'$-subgroups are non-abelian.  By Lemma \ref{peplem}, $p$ must be $2$ or $3$, and $n$ is in addition divisible by $q$, where $q$ is $5$ or $7$.  If $p=3$, then $n/(p^4 q)$ is at most $4$.  As this is less than $q$, there must be a coprime automorphism of $F(G)$ of order $q$.  The Hall $3'$-subgroup of $F(G)$ cannot have an automorphism of order $q$, so the Hall $3$-subgroup of $F(G)$ must have an automorphism of order $q$.  As a result $3^4$ must divide $F(G)$, so $3^5$ divides $G$, contradicting Lemma \ref{peplem}.\\

We have now established $2^4$ divides $n$.  The numbers listed in the statement of the theorem include all multiples of $336 = 2^4.3.7$ which are at most $2000$, so we may assume $n$ is not divisible by $336$.  Hence we may assume that the prime divisors of $n$ are $2$ and $5$ and exactly one of $3$ and $7$.  In this case, a Hall $2'$-subgroup $H$ of $G$ must have order dividing $35$, $45$ or $75$.  All groups of order dividing $25$, $35$ or $45$ are abelian, so $|H|$ is $75=3.5^2$.  This means $n=2^4.3.5^2=1200$.\end{proof}

To complete the proof of Theorem F, we check for the seven group orders listed that every group is $\Conpi^*$, by calculating $\ccl(G) - \ccl(A)\ccl(B)$ directly for all relevant $\pi$.  This was done using \cite{8}.

\section{Acknowledgments}
This paper is based on results obtained by the author while under the supervision of Robert Wilson at Queen Mary, University of London.  The author would like to thank John Bray and Robert Wilson for their advice in obtaining these results, and Charles Leedham-Green for his detailed corrections and feedback regarding the presentation of this paper.

\end{document}